\theoremstyle{theorem}
\newtheorem{theorem}{Theorem}
\newtheorem{proposition}[theorem]{Proposition}
\newtheorem{lemma}[theorem]{Lemma}
\theoremstyle{definition}
\newtheorem{remark}[theorem]{Remark}
\newtheorem{example}[theorem]{Example}
\numberwithin{theorem}{section}
\newcommand{\PP}{\mathbb{P}}
\newcommand{\RR}{\mathbb{R}}
\newcommand{\QQ}{\mathbb{Q}}
\newcommand{\CC}{\mathbb{C} }
\newcommand{\NN}{\mathbb{N}}
\newcommand{\KK}{\mathbb{K}}
\title{\bf Linear PDE with Constant Coefficients}
\author{Rida Ait El Manssour,  Marc H\"ark\"onen and Bernd Sturmfels}
\date{}
\begin{document}
\maketitle

\begin{abstract}
\noindent 
We discuss practical methods for computing the space of solutions to an arbitrary
homogeneous linear system of partial differential equations with constant coefficients.
These rest on the Fundamental Principle of Ehrenpreis--Palamodov from
the 1960s. We develop this further using
recent advances in
computational commutative algebra.
 \end{abstract}

 \section{Introduction}
 \label{sec1}

Our calculus class taught us how to solve
ordinary differential equations (ODE) of the form
\begin{equation}
\label{eq:ODE}
  c_0 \phi + c_1 \phi ' + c_2 \phi '' + \cdots + c_m \phi^{(m)} \,\, = \,\, 0 . 
\end{equation}  
Here we seek functions $\phi = \phi(z)$ in one unknown $z$.
The ODE is linear of order $m$, it has constant coefficients $c_i \in \CC$, and it
is homogeneous, meaning that the right hand side is zero.
The set of all solutions  is a vector space of dimension $m$. A basis consists of
$m$ functions
\begin{equation}
\label{eq:ODEsol} \qquad \qquad \phi(z) \,\, = \,\, z^a \cdot {\rm exp}(u_i z) .
\end{equation}
Here  $u_i$ is a complex zero with multiplicity larger than $a \in \NN$ of the characteristic polynomial
\begin{equation} \label{eq:charpoly}
p(x) \,\, = \,\,  c_0 + c_1 x + c_2 x^2 + \cdots + c_m x^m .
\end{equation}
Thus solving the ODE (\ref{eq:ODE}) means finding all the zeros of
 (\ref{eq:charpoly}) and their multiplicities.

We next turn to a partial differential equation (PDE) for functions $\phi: \RR^2 \rightarrow \RR$
that is familiar from the undergraduate curriculum, namely  the one-dimensional wave equation
\begin{equation} \qquad
\label{eq:wave1} \phi_{tt}(z,t) \,\,= \,\,c^2 \,\phi_{zz}(z,t), \qquad {\rm where} \,\,\,c \in \RR \backslash \{0\} .
\end{equation}
D'Alembert found in 1747 that the general solution is the superposition of traveling waves,
\begin{equation}
\label{eq:wave2}
\phi(z,t) \,\, = \,\, f(z+ct) \,+\, g(z-ct) , 
\end{equation}
where $f$ and $g$ are twice differentiable functions in one variable.
For the special parameter value $c=0$, the PDE (\ref{eq:wave1}) becomes $\phi_{tt} = 0$, and
the general solution has still two summands
\begin{equation}
\label{eq:wave3}
 \phi(z,t) \,\, = \,\, f(z) \,+\, t \cdot h'(z). 
 \end{equation}
We get this from (\ref{eq:wave2}) by replacing
$ g(z-ct)$ with $ \frac{1}{2c}(h(z{+}ct) - h(z{-}ct)) $ and taking the limit $c \rightarrow 0$.
Here, the role of the characteristic polynomial (\ref{eq:charpoly}) is played by the quadratic form
\begin{equation}
\label{eq:qc}
 q_c(u,v) \quad = \quad v^2 - c^2 u^2 \,\, = \,\, (v-cu)(v+cu). 
\end{equation}
The solutions (\ref{eq:wave2}) and (\ref{eq:wave3}) mirror
the algebraic geometry of the conic $\{q_c=0\}$ for any $c \in \RR$.

\smallskip

Our third example is a system of  three PDE for unknown functions
$$ \psi \,:\, \RR^4 \,\rightarrow\, \CC^2 \,, \,\, (x,y,z,w) \,\mapsto\,  \bigl(\alpha(x,y,z,w),\beta(x,y,z,w) \bigr) . $$
Namely, we consider the following linear PDE with constant coefficients:
\begin{equation}
\label{eq:pdesys1} \alpha_{xx} + \beta_{xy} \, = \, \alpha_{yz} + \beta_{zz} \, = \, \alpha_{xxz} + \beta_{xyw}\,= \,0. 
\end{equation}
The general solution to this system has nine summands,
labeled $a,b, \ldots,h$ and $(\tilde \alpha,\tilde \beta)$:
\begin{equation}
\label{eq:pdesys2}
\! \begin{matrix}
\!\! \alpha &\! = \! &\!\!  a_z(y,\!z,\!w)- b_y(x,\!y)+c(y,\!w)+xd(y,\!w)+xg(z,\!w)-xyh_z(z,\!w)+  \tilde \alpha(x,y,z,w), \\
\!\!  \beta &\! = \! &  -a_y(y,z,w)\,+\,b_x(x,y)\,+\,e(x,w)\,+\,zf(x,w) \,+\,xh(z,w)\,+ \tilde \beta(x,y,z,w) . 
\end{matrix}
\end{equation}
Here, $a$ is any function in three variables, $b,c,d,e,f,g,h$ are functions in two variables,
and $\tilde \psi = (\tilde \alpha, \tilde \beta)$ is any function $\RR^4 \rightarrow \CC^2$ that satisfies the 
following four linear PDE of first order:
\begin{equation}
\label{eq:pdesys3} \tilde \alpha_{x} +  \tilde \beta_{y} \, = \,
\tilde \alpha_{y} + \tilde \beta_{z} \, = \, \tilde \alpha_{z} - \tilde \alpha_{w}\,= \,
\tilde \beta_{z} - \tilde \beta_{w} \, = \, 0 . 
\end{equation}
We note that all solutions to (\ref{eq:pdesys3}) also satisfy (\ref{eq:pdesys1}), and
they admit the integral representation
\begin{equation}
\label{eq:pdesys4} 
\tilde \alpha = 
\! \int \!\! t({\rm exp}(s^2x+sty+t^2(z{+}w))) d\mu(s,t)  \,, \,\,\,
\tilde \beta = 
- \!\!\int \! \! s({\rm exp}(s^2x+sty+t^2(z{+}w))) d\mu(s,t),
\end{equation}
where $\mu$ is a measure on $\CC^2$. All functions in (\ref{eq:pdesys2})
are assumed to be suitably differentiable.

\smallskip

Our aim is to present
 methods for  solving arbitrary systems of 
homogeneous linear PDE
with constant coefficients. The input is a system like (\ref{eq:ODE}),
(\ref{eq:wave1}), (\ref{eq:pdesys1}) or (\ref{eq:pdesys3}). We seek to compute
the corresponding output (\ref{eq:ODEsol}),
(\ref{eq:wave2}), (\ref{eq:pdesys2}) or (\ref{eq:pdesys4}) respectively.
We present techniques that are based
on the Fundamental Principle of Ehrenpreis and Palamodov,
as discussed in the classical books \cite{BJORK, EHRENPREIS,
HORMANDER, PALAMODOV}. We utilize the 
theory of differential primary decomposition \cite{CS}.
While deriving (\ref{eq:wave2})  from (\ref{eq:wave1})  is easy by hand,
getting from (\ref{eq:pdesys1}) to (\ref{eq:pdesys2})  requires a computer.

This article is primarily expository. One goal is to explain the findings
 in \cite{CCHKL, CC, CHKL, CPS, CS}, such as the
 differential primary decompositions of minimal size, from the viewpoint of analysis and PDE. In addition to these
recent advances, our development rests on  a considerable
body of earlier work. The articles \cite{DAMIANO, oberst95, oberst99} are especially important.
However, there are also some new contributions in the present article,
mostly in Sections \ref{sec4}, \ref{sec5} and \ref{sec6}.
We describe the first universally applicable algorithm for computing
Noetherian operators.

This presentation
is organized as follows.
Section \ref{sec2} explains how linear PDE are represented by polynomial modules.
The Fundamental Principle (Theorem \ref{thm:Palamodov_Ehrenpreis})
is  illustrated with concrete examples.
In Section \ref{sec3} we examine the support of a module, and 
how it governs exponential solutions (Proposition \ref{prop:46})
 and polynomial solutions (Proposition \ref{cor:aredense}).
 Theorem~\ref{thm:finitedim} characterizes
  PDE whose solution space is finite-dimensional.
   Section \ref{sec4} features the theory of
  differential primary decomposition
  \cite{CC, CS}. 
   Theorem \ref{thm:DPD_to_integrals}
  shows how this theory yields the  integral representations promised
 by Ehrenpreis--Palamodov. This result appeared implicitly in the analysis literature,
 but the present algebraic form is new.
 It~is the foundation of our algorithm for computing a minimal set of
 Noetherian multipliers. This is
 presented in Section \ref{sec5}, along with its
 implementation in the 
 command {\tt solvePDE} in
 {\tt Macaulay2} \cite{MAC2}.

 The concepts of schemes and coherent sheaves
 are central to modern algebraic geometry.
 In Section \ref{sec6} we argue that linear PDE are
 an excellent tool for understanding these concepts,
 and for computing their behaviors in families. Hilbert schemes and Quot schemes 
 make an appearance   along the lines of   \cite{CC, CPS}.
Section \ref{sec7} is devoted to
directions for further study and research in the subject~area of this paper.
It also features more examples and applications.

\section{PDE and Polynomials}
\label{sec2}

Our point of departure is the observation that 
homogeneous linear partial differential equations 
with constant coefficients are the same as vectors of polynomials.
The entries of the vectors are elements in the polynomial ring
$R = K[\partial_{1}, \partial_{2}, \ldots,\partial_{n}]$,
where $K$ is a subfield of the complex numbers $\CC$. In all our examples
we use the field $K = \QQ$ of rational numbers. This has the virtue of being
amenable to exact symbolic computation, e.g.~in {\tt Macaulay2}~\cite{MAC2}.

For instance, in (\ref{eq:ODE}), we have $n=1$. Writing
$\partial = \frac{\partial}{\partial z}$ for the generator of $R$, our ODE is given by
one polynomial  $p(\partial) = c_0 + c_1 \partial + \cdots + c_m \partial^m$,
where $c_0,c_1,\ldots,c_m \in K$.
For $n \geq 2$, we write ${\bf z} = (z_1,\ldots,z_n)$ for the unknowns in the 
functions we seek, and the partial derivatives that act on these functions are
$\partial_i = \partial_{z_i} = \frac{\partial}{\partial z_i}$. With this notation,
the wave equation in (\ref{eq:wave1}) corresponds to the polynomial
$q_c(\partial) = \partial_2^2 - c^2 \partial_1^2 = (\partial_2 - c \partial_1)(\partial_2 + c \partial_1)$ with
 $n=2$.
Finally, the PDE in (\ref{eq:pdesys1}) has $n=4$ and is encoded in three polynomial vectors
\begin{equation}
\label{eq:threevectors}
\begin{pmatrix} \partial_1^2 \\ \partial_1 \partial_2 \end{pmatrix} \, , \quad 
\begin{pmatrix} \partial_2 \partial_3 \\ \partial_3^2  \end{pmatrix} \quad {\rm and} \quad
\begin{pmatrix} \partial_1^2 \partial_3 \\ \partial_1 \partial_2 \partial_4 \end{pmatrix} .
\end{equation}
The system  (\ref{eq:pdesys1}) corresponds to the submodule of $R^2$ that is generated by
these three vectors.

We shall study PDE that describe vector-valued functions
from $n$-space to $k$-space. 
To this end, we need to specify a space $\mathcal{F}$ of
sufficiently differentiable functions
such that $\mathcal{F}^k$ contains our solutions. The 
scalar-valued functions in $\mathcal{F}$ are either real-valued functions
$\psi:\Omega \rightarrow \RR$ or complex-valued functions
$\psi:\Omega \rightarrow \CC$, where
$\Omega$ is a suitable subset of $\RR^n$ or $\CC^n$.
Later we will be more specific about the choice of $\mathcal{F}$.
One requirement is that
the space $\mathcal{F}^k$ should contain the {\em exponential functions}
\begin{equation}
\label{eq:polexpfun} q({\bf z}) \cdot {\rm exp}( {\bf u}^t {\bf z}) \,\, = \,\,
q(z_1,\ldots,z_n) \cdot {\rm exp}( u_1 z_1 + \cdots + u_n z_n).  
\end{equation}
Here ${\bf u} \in \CC^n$ and $q$ is any vector of length $k$ whose entries are polynomials in $n$ unknowns.

\begin{remark}[$k=1$]
A differential operator $p(\partial) $ in $R$ annihilates the 
function ${\rm exp}({\bf u}^t {\bf z})$  if and only if $p({\bf u}) = 0$.
This is the content of \cite[Lemma 3.25]{INLA}. See also
Lemma \ref{cor:module_exponential_solution}.
If $p(\partial)$ annihilates a function $ q({\bf z}) \cdot {\rm exp}( {\bf u}^t {\bf z})$,
where $q$ is a polynomial of positive degree, then 
${\bf u}$ is a point of higher multiplicity on the hypersurface $\{ p = 0 \}$.
In the case $n=1$, when $p$ is the characteristic polynomial (\ref{eq:charpoly}),
we have a solution basis of exponential functions (\ref{eq:ODEsol}).
\end{remark}

Another requirement for the space  $\mathcal{F}$  is that it is closed under  differentiation.
In other words, if $\phi = \phi(z_1,\ldots,z_n)$ lies in $\mathcal{F}$ then so does
$\partial_i \bullet \phi = \frac{\partial \phi}{\partial z_i}$ for $i=1,2,\ldots,n$.
The elements of $\mathcal{F}^k$ are vector-valued functions $\psi = \psi({\bf z})$.
Their coordinates $\psi_i$ are scalar-valued functions
in $\mathcal{F}$.
All in all, $\mathcal{F}$ should be large,
in the sense that it furnishes enough solutions.
Formulated algebraically, we want
$\mathcal{F}$ to be an {\em injective $R$-module}
 \cite{Lomadze}. A more precise desideratum,
 formulated by Oberst  \cite{oberst90,oberst95, oberst96}, is that
 $\mathcal{F}$ should be an {\em injective cogenerator}.

Examples of injective cogenerators include the ring $\CC[[z_1,\dotsc,z_n]]$ of formal power series, the space $C^\infty(\RR^n)$ of smooth
complex-valued functions over $\RR^n$, or more generally, the space $\mathcal{D}'(\RR^n)$ of complex-valued distributions on $\RR^n$. If $\Omega $ is any open convex domain in $\RR^n$ then we
can also  take $\mathcal{F}$ to be $C^\infty(\Omega)$ or $\mathcal{D}'(\Omega)$.
In this paper we focus on algebraic methods.
Analytic difficulties are mostly swept under the rug.

Our PDE are elements in the free $R$-module $R^k$, that is, they are column vectors
of length $k$ whose entries are polynomials in $\partial = (\partial_1,\ldots,\partial_n)$.
Such a vector acts on $\mathcal{F}^k$ by coordinate-wise application of the differential
operator and then adding up the results in $\mathcal{F}$.
In this manner, each element in $R^k$ defines an $R$-linear map 
$\mathcal{F}^k \rightarrow \mathcal{F}$.
For instance, the third vector in (\ref{eq:threevectors}) is an element in $R^2$ that
acts on functions $\psi : \RR^4 \rightarrow \CC^2$ in $\mathcal{F}^2$ as follows:
\begin{equation}
\label{eq:doubleaction}
\begin{pmatrix} \partial_1^2 \partial_3 \\ \partial_1 \partial_2 \partial_4 \end{pmatrix}  \bullet
(\psi_1 ({\bf z} ) ,\psi_2({\bf z})) \quad = \quad
\frac{\partial^3 \psi_1}{ \partial z_1^2 \partial z_3}\, + \,
\frac{\partial^3 \psi_2}{ \partial z_1 \partial z_2 \partial z_4}. 
\end{equation}
The right hand side is a scalar-valued function $\RR^4 \rightarrow \CC$,
that is, it is an element of $\mathcal{F}$.

Our systems of PDE are submodules $M$ of the free module $R^k$.
By Hilbert's Basis Theorem, every module $M$ is finitely generated, so 
we can write $M = {\rm image}_R(A)$, where $A$ is a $k \times l$ matrix
with entries in $R$. Each column of $A$ is a generator of $M$ and it 
defines a differential operator that maps $\mathcal{F}^k$ to $\mathcal{F}$.
The solution space to the PDE given by $M$ equals
\begin{equation} 
\label{eq:solM}
{\rm Sol}(M) \,\, := \,\, \bigl\{ \,\psi \in \mathcal{F}^k \,:\, m \bullet \psi = 0 \,\,\, \hbox{for all} \, \,\, m \in M \,\bigr\}.
\end{equation}
It suffices to take the operators $m$ from a generating set of $M$, such as the $l$ columns of $A$.
The case $k=1$ is of special interest, since we  often consider PDE for scalar-valued functions.
In that case, the submodule is an ideal in the polynomial ring $R$ and we denote this by $I$.
The solution space ${\rm Sol}(I)$ of the ideal $I \subseteq R$ is the set of functions
$\phi$ in $\mathcal{F}$ such that $p(\partial) \bullet \phi = 0 $ for all $p \in I$.
Thus ideals are instances of modules, with their own~notation.

The solution spaces ${\rm Sol}(M)$ and ${\rm Sol}(I)$ are $\CC$-vector spaces and $R$-modules.
Indeed, any $\CC$-linear combination of solutions is again a solution.
The $R$-module action 
means applying the same differential operator $p(\partial)$ to each coordinate, 
which leads to another
vector in~$\mathcal{F}^k$. 
This action takes solutions to solutions because the ring of differential operators with constant coefficients $R = \mathbb{C}[\partial_1,\dotsc,\partial_n]$ is commutative.

The purpose of this paper is to present practical methods for the following task:
\begin{equation}
\label{eq:task1} \begin{matrix}
\hbox{\em Given a $k \times l$ matrix $A$ with entries in $R = K[\partial_1,\ldots,\partial_n]$, compute a good} \\
\hbox{\em representation for the solution space ${\rm Sol}(M)$ of the module $M = {\rm image}_R(A)$.}
\end{matrix}
\end{equation}
If $k=1$ then we consider the ideal $I$ generated by the entries of $A$ 
and we compute ${\rm Sol}(I)$.

This raises the question of what a ``good representation'' means.
The formulas  in (\ref{eq:ODEsol}),
(\ref{eq:wave2}), (\ref{eq:pdesys2}) and (\ref{eq:pdesys4}) 
are definitely good. They guide us to what is desirable.
Our general answer stems from the following important result
 at the crossroads of analysis and algebra.
It  involves two sets of unknowns
${\bf z} = (z_1,\ldots,z_n)$ and ${\bf x}= (x_1,\ldots,x_n)$.
Here ${\bf x}$ gives coordinates
on certain  irreducible varieties $V_i$
in $\CC^n$ that are parameter spaces for solutions.
Our solutions $\psi$ are functions in ${\bf z}$.
We take $\mathcal{F} = C^\infty(\Omega)$ where $\Omega \subset \RR^n$ is open,
convex, and bounded.

 \begin{theorem}[Ehrenpreis--Palamodov Fundamental Principle]
 	\label{thm:Palamodov_Ehrenpreis}
Consider a module $M \subseteq R^k$, representing linear PDE for a function 
	$\psi: \Omega \rightarrow \CC^k$.	
  There exist irreducible varieties $V_1,\ldots,V_s$ in $\CC^n$ and
  finitely many vectors  $B_{ij}$ of polynomials in $2n$ unknowns
  $({\bf x},{\bf z})$,
  all independent of the set $\Omega$,
   such that any solution $\psi \in \mathcal{F}$
  admits an integral representation
\begin{equation}
\label{eq:anysolution}
 	\psi(\mathbf{z}) \,\,\,= \,\,\, \sum_{i=1}^s \sum_{j=1}^{m_i} \,\int_{V_i} \!\! B_{ij} \! \left(\mathbf{x},\mathbf{z}\right) 
	\exp\left( \mathbf{x}^t \,\mathbf{z} \right) d\mu_{ij} (\mathbf{x}).
\end{equation}
Here $m_i$ is a certain invariant of $(M,V_i)$ and
each $\mu_{ij}$ is a bounded measure supported on the  variety~$V_i $.
\end{theorem}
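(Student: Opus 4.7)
The plan is to reduce the PDE question to an algebraic question about the polynomial module $R^k/M$ via Fourier--Laplace duality, and then exploit the primary decomposition of $M$ together with a Paley--Wiener/Hahn--Banach density argument to produce the measures $\mu_{ij}$.

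First I would set up the duality framework. Since $\Omega \subset \RR^n$ is open, convex, and bounded, the Paley--Wiener theorem identifies the Fourier--Laplace transforms of compactly supported distributions on $\Omega$ with a concrete space of entire functions of exponential type on $\CC^n$, in a way that intertwines each $\partial_i$ acting on $\mathcal{F} = C^\infty(\Omega)$ with multiplication by $x_i$ on the transform side. A solution $\psi \in \mathrm{Sol}(M)$ is, in this dictionary, a continuous $\CC$-linear functional on the finitely generated $R$-module $R^k/M$, where $R$ acts by multiplication on the entire functions. The whole theorem can thus be rephrased: every such analytic functional on $R^k/M$ is an integral against polynomial-times-exponential vectors of the prescribed shape.

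Next comes the algebraic core. By the differential primary decomposition to be developed in Section \ref{sec4}, one has $M = \bigcap_{i=1}^{s} M_i$ with $M_i$ primary to an associated prime $P_i$ defining $V_i$, and for each $i$ there are $m_i$ Noetherian multipliers $B_{i1},\dotsc,B_{im_i}$ in $2n$ variables which jointly characterize membership in $M_i$: a polynomial vector $f \in R^k$ lies in $M_i$ iff the scalar polynomials $B_{ij}(\mathbf{x},\partial_{\mathbf{x}})\bullet f$ vanish identically on $V_i$ for every $j$. Dualizing, the space of $R$-linear functionals on $R^k/M_i$ supported on $V_i$ is exactly the span, under integration against measures on $V_i$, of the evaluations $f \mapsto \int_{V_i} B_{ij}(\mathbf{x},\partial_{\mathbf{x}})\bullet f(\mathbf{x}) \, d\mu(\mathbf{x})$. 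Inverting the Fourier--Laplace transform converts each such evaluation pairing into an integral of $B_{ij}(\mathbf{x},\mathbf{z})\exp(\mathbf{x}^t\mathbf{z})$ against $d\mu_{ij}(\mathbf{x})$, which is precisely the summand in (\ref{eq:anysolution}).

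Combining the two sides, an arbitrary solution $\psi$ is recovered by summing over the primary components $M_i$ (using the intersection $M = \bigcap M_i$ to split a functional on $R^k/M$) and then over the Noetherian multipliers indexed by $j$. The main obstacle is entirely analytic: verifying that the formal duality above yields honest bounded measures on each $V_i$, with growth controlled uniformly so that the integrals converge and define elements of $C^\infty(\Omega)$. This is where the convexity and boundedness of $\Omega$ is essential; one must combine Hahn--Banach extension with delicate Paley--Wiener estimates of exponential type $H_\Omega$ (the support function of $\Omega$) to push the functional-analytic decomposition onto the geometric varieties $V_i$. The algebraic half is formal once differential primary decomposition is in place, but this analytic density and boundedness argument is the technical heart of the Ehrenpreis--Palamodov proof.
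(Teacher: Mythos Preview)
The paper does not actually prove Theorem~\ref{thm:Palamodov_Ehrenpreis}. It states the result and immediately cites the books of Bj\"ork, Ehrenpreis, H\"ormander and Palamodov for the proof. The only place the paper touches the argument is in Theorem~\ref{thm:DPD_to_integrals}, where it shows that any differential primary decomposition furnishes valid Noetherian multipliers; but even there, the analytic step---that every $C^\infty(\Omega)$ solution admits such an integral representation---is handled by the sentence ``we may thus follow the classical analytical constructions in the books \cite{BJORK, HORMANDER, PALAMODOV}.'' So there is no in-paper proof to compare against.

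Your sketch is a fair high-level outline of how those classical proofs actually run: Paley--Wiener to identify $C^\infty(\Omega)$-solutions with analytic functionals on $R^k/M$, primary decomposition plus Noetherian operators on the algebraic side, and Hahn--Banach with exponential-type estimates to produce the measures. You are right that the analytic density/boundedness argument is the technical heart and that convexity of $\Omega$ enters through the support function. One small anachronism: you invoke ``the differential primary decomposition to be developed in Section~\ref{sec4},'' but the classical proofs predate that refinement and work with an ordinary primary decomposition plus Noetherian operators for each primary component separately (typically yielding more operators than ${\rm amult}(M)$). The minimality coming from differential primary decomposition is the paper's contribution, layered on top of the classical theorem rather than used to prove it. Also, your claim that a functional on $R^k/M$ ``splits'' over the intersection $M = \bigcap M_i$ is not formal---the primary components overlap when there are embedded primes, and disentangling their contributions is part of the delicate work the paper alludes to at the end of the proof of Theorem~\ref{thm:DPD_to_integrals}.
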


Theorem \ref{thm:Palamodov_Ehrenpreis} appears in different forms
in the books by Bj\"ork  \cite[Theorem 8.1.3]{BJORK},
Ehrenpreis \cite{EHRENPREIS},
H\"ormander \cite[Section 7.7]{HORMANDER} and
Palamodov \cite{PALAMODOV}. Other references
with different emphases include \cite{BP, Lomadze, oberst95}.
For a perspective from commutative algebra see~\cite{CPS, CS}.

In the next sections we will study the ingredients in
Theorem \ref{thm:Palamodov_Ehrenpreis}. Given the module~$M$, we 
compute each {\em associated variety} $V_i$,
the {\em arithmetic length} $m_i$ of $M$ along $V_i$,
and the {\em Noetherian multipliers} $B_{i,1},B_{i,2},\ldots,B_{i,m_i}$.
We shall see that not all $n$ of the unknowns $z_1,\ldots,z_n$ appear
in the polynomials $B_{i,j}$ but only a subset of ${\rm codim}(V_i)$ of them.

\smallskip

The most basic example is the  ODE in (\ref{eq:ODE}), with $l=n=k=1$.
Here $V_i = \{u_i\}$ is the $i$th root of the polynomial (\ref{eq:charpoly}),
which has multiplicity $m_i$, and
$B_{i,j} = z^{j-1}$.
 The measure $\mu_{ij}$ is a scaled Dirac measure on $u_i$, 
 so the integrals in (\ref{eq:anysolution}) are 
 multiples of the basis functions~(\ref{eq:ODEsol}).

In light of Theorem \ref{thm:Palamodov_Ehrenpreis}, we now refine
our computational task in (\ref{eq:task1}) as follows:
\begin{equation}
\label{eq:task2} \begin{matrix}
\hbox{\em Given a $k \times l$ matrix $A$ with entries in $R = K[\partial_1,\ldots,\partial_n]$, 
compute  the varieties $V_i$}
 \\ \hbox{\em and the Noetherian multipliers
$B_{ij}({\bf x},{\bf z})$.
This encodes ${\rm Sol}(M)$ for $M = {\rm image}_R(A)$.}
\end{matrix}
\end{equation}
In our introductory examples we gave formulas for the general solution, namely
(\ref{eq:wave2}) and (\ref{eq:pdesys2}).
We claim that such formulas can be read off from the
 integrals in (\ref{eq:anysolution}). For instance,
 for the wave equation (\ref{eq:wave1}),
 we have $s=2$,   $B_{1,1} = B_{1,2} = 1$,
 and (\ref{eq:wave2}) is obtained by integrating
 ${\rm exp}( {\bf x}^t {\bf z})$ against measures
$ d\mu_{i1}({\bf x})$ on two lines $V_1$ and $V_2$ in $ \CC^2$.
For the system (\ref{eq:pdesys1}), we find $s=6$, with
$m_1=m_2=m_3=1$ and $m_4=m_5=m_6=2$, and
the nine integrals in (\ref{eq:anysolution})
translate into (\ref{eq:pdesys2}). We shall explain such a translation
in full detail for two other examples.

\begin{example}[$n=3,k=1,l=2$] \label{ex:312}
The ideal $I = \langle \partial_1^2 - \partial_2 \partial_3, \partial_3^2 \rangle $ represents the PDE
\begin{equation}
\label{eq:anton} \frac{\partial^2 \phi}{\partial z_1^2} \,=\, \frac{\partial^2 \phi} {\partial z_2 \partial z_3}
\qquad {\rm and} \qquad \frac{\partial^2 \phi}{\partial z_3^2} \, = \, 0  
\end{equation}
for a scalar-valued function $\phi = \phi(z_1,z_2,z_3)$.
 This is \cite[Example 4.2]{CHKL}. A {\tt Macaulay2}
 computation as in Section \ref{sec5} shows that
 $s=1, m_1 =4$. It reveals the  Noetherian multipliers
$$ B_1 = 1\,,\, \, B_2 = z_1\,,\, \, B_3 = z_1^2 x_2 + 2 z_3\,,\, \, B_4 = z_1^3 x_2 + 6 z_1 z_3 . $$
 Arbitrary functions $\,f(z_2) = \int \! {\rm exp}( t z_2 ) dt\,$
  are obtained by integrating against suitable measure on the line
 $V_1= \{ (0,t,0) \,:\,t \in \CC \} \subset \CC^3$.
Their derivatives are found by differentiating under the integral sign, namely
$\,f'(z_2) =  \int t \cdot {\rm exp}( t z_2 )dt$.
 Consider four functions $a,b,c,d$,
each specified by a different measure. Thus the sum of the four integrals in (\ref{eq:anysolution})~evaluates to
\begin{equation}
\label{eq:niceintrep}
 \phi({\bf z}) \,\, = \,\, a(z_2) \, + \, z_1 b(z_2) \, + \,( z_1^2 c'(z_2) + 2 z_3 c(z_2) ) \,   
+ \,( z_1^3 d'(z_2) + 6 z_1z_3 d(z_2)). 
\end{equation}
According to Ehrenpreis--Palamodov, this sum is the general solution of the PDE (\ref{eq:anton}).
\end{example}

Our final example uses concepts from
 primary decomposition, to be reviewed in~Section~\ref{sec3}.

\begin{example}[$n=4,k=2,l=3$] \label{ex:mod1a}
Let $M \subset R^4$  be the module generated by the columns~of 
 \begin{equation}
 \label{eq:exfour}
 A \,\, = \,\, \begin{bmatrix} \,
 \partial_{1} \partial_{3} &  \partial_{1} \partial_{2} &  \partial_{1}^2 \partial_{2} \, \,\\ \,
 \partial_{1}^2 & \partial_{2}^2 & \partial_{1}^2 \partial_{4} \,\,
 \end{bmatrix}. 
 \end{equation}
 Computing ${\rm Sol}(M)$ means solving
  $\, \frac{\partial^2 \psi_1}{\partial z_1 \partial z_3}  
 +  \frac{\partial^2 \psi_2}{\partial z_1^2}   \, = \,
      \frac{\partial^2 \psi_1}{\partial z_1 \partial z_2} +   \frac{\partial^2 \psi_2}{\partial z_2^2} 
      \,= \,
      \frac{\partial^3 \psi_1}{\partial z_1^2 \partial z_2} +
      \frac{\partial^3 \psi_2}{\partial z_1^2 \partial z_4} \,=\,0$.
Two  solutions are $\psi({\bf z}) = \bigl(\phi(z_2,z_3,z_4) , 0\bigr)$
   and  $\,\psi({\bf z}) =  {\rm exp}(s^2 t z_1 + s t^2 z_2 + s^3 z_3 + t^3 z_4) \cdot 
   \bigl( t , -s \bigr)$.

We apply Theorem \ref{thm:Palamodov_Ehrenpreis} to 
derive the general
solution to~(\ref{eq:exfour}).
The module $M$ has six associated primes, namely
$P_1 = \langle \partial_{1} \rangle$,
$P_2 = \langle \partial_{2}, \partial_{4} \rangle $,
$P_3 = \langle \partial_{2}, \partial_{3} \rangle $,
$P_4 = \langle \partial_{1}, \partial_{3} \rangle $,
$P_5 = \langle \partial_{1}, \partial_{2} \rangle $, and
$P_6 = \langle \partial_{1}^2 - \partial_2 \partial_3, \partial_1 \partial_2 - \partial_3 \partial_4,
\partial_2^2 - \partial_1 \partial_4 \rangle $.
Four of them are minimal and two are
embedded. We find that
$m_1 = m_2 = m_3 = m_4 = m_6 = 1$ and $m_5 = 4$.
A minimal primary decomposition 
\begin{equation}
\label{eq:MPD}
 M \,= \, M_1 \,\cap \,M_2 \,\cap \, M_3 
 \,\cap \, M_4  \,\cap \, M_5  \,\cap \, M_6 
 \end{equation}
 is given by the following primary submodules of $R^4$, each of which contains $M$:
 \begin{small}
$$ 
 M_1 = {\rm im}_R \begin{bmatrix}
 \partial_1 & 0  \\
    0           & 1 
 \end{bmatrix},\,\,\quad
  M_2 = {\rm im}_R \begin{bmatrix}
  \partial_2 & \partial_4 &       0         &         0      & \partial_3 \\
       0         &         0      &  \partial_2 & \partial_4 & \partial_1 
\end{bmatrix},\,\,\qquad
 M_3 = {\rm im}_R \begin{bmatrix}
 \partial_2 & \partial_3 & 0  \\
     0          &    0           & 1 
 \end{bmatrix},\,\,
 $$
 $$
  M_4 = {\rm im}_R \begin{bmatrix}
 \partial_3^5 & \partial_1 &  0 & 0 \\
        0          &  \partial_2 & \partial_1 & \partial_3 
 \end{bmatrix},\,\,\,\,
  M_5 = {\rm im}_R \begin{bmatrix}
  \partial_1 & \partial_2^5 & 0 & 0 \\
  0 & 0 & \partial_1^2 & \partial_2^2 
 \end{bmatrix},\,\,
  M_6 = {\rm im}_R \begin{bmatrix}
\partial_1  & \partial_2  & \partial_3  \\
\partial_2 & \partial_4  & \partial_1
\end{bmatrix}.
 $$
\end{small}
 The number of Noetherian multipliers $B_{ij}$ is
 $\sum_{i=1}^6 m_i = 9$. We choose them to be
  \begin{small} $$
 B_{1,1} {=} \begin{bmatrix}  1  \\ 0  \end{bmatrix}  ,\,
  B_{2,1} {=} \begin{bmatrix}   \phantom{-}x_1 \\   -x_3 \end{bmatrix} ,\,
   B_{3,1} {=} \begin{bmatrix}  1  \\   0 \end{bmatrix}  , \,
 B_{4,1} = \begin{bmatrix}   x_2 z_1 \\ -1  \end{bmatrix} , \,
  B_{5,i} = \begin{bmatrix}   0 \\ z_1 z_2  \end{bmatrix}  \!,
    \begin{bmatrix}   0 \\ z_1  \end{bmatrix} \! ,
      \begin{bmatrix}   0 \\ z_2  \end{bmatrix} \! , 
  \begin{bmatrix}   0 \\ 1  \end{bmatrix} , \,
    B_{6,1} = \begin{bmatrix}   \phantom{-}x_4 \\ -x_2  \end{bmatrix}.
$$    \end{small}
These nine vectors describe all solutions to our PDE. For instance,
$B_{3,1}$ gives the solutions
\begin{small} $  \begin{bmatrix} \alpha(z_1,z_4) \\ 0 \end{bmatrix}$, \end{small} 
and $B_{5,1}$ 
gives the solutions
\begin{small} $  \begin{bmatrix} 0 \\ z_1 z_2 \beta(z_3,z_4) \end{bmatrix}$,  \end{small}
where $\alpha,\beta$ are bivariate functions.
Furthermore
$B_{1,1}$ and $B_{6,1}$ encode
the two families of solutions mentioned after (\ref{eq:exfour}).

For the latter, we note that $V_6 = V(P_6) $ is the surface in $\CC^4$
with parametric representation $\,(x_1,x_2,x_3,x_4) =  (s^2 t, st^2, s^3,t^3)\,$
for $s,t \in \CC$. This surface is the cone over the twisted cubic curve, 
in the same notation as in \cite[Section 1]{CPS}.
The kernel under the integral in  (\ref{eq:anysolution}) equals
$$  \begin{bmatrix}   \phantom{-}x_4 \\ -x_2  \end{bmatrix}  
{\rm exp}\bigl(x_1 z_1 + x_2 z_2 + x_3 z_3 + x_4 z_4\bigr)  \quad = \quad 
t^2   \begin{bmatrix}   \phantom{-} t \\ - s  \end{bmatrix}  
 {\rm exp}\bigl( s^2t z_1 + st^2 z_2 + s^3 z_3 + t^3 z_4 \bigr).
$$
This is a solution to $M_6$, and hence to $M$, for any values of $s$ and $t$.
Integrating the left hand side over ${\bf x} \in V_6$ amounts to 
integrating the right hand side over $(s,t) \in \CC^2$. Any such integral
is also a solution. Ehrenpreis--Palamodov tells us that these are all the solutions.
\end{example}

\section{Modules and Varieties}
\label{sec3}

Our aim is to offer practical tools for solving PDE. The input is a 
$k \times l$ matrix $A$ with entries in $R = K[\partial_1,\ldots,\partial_n]$,
and $M = {\rm image}_R(A)$ is the corresponding submodule of~$R^k = \bigoplus_{j=1}^k Re_j$.
The output is the description of ${\rm Sol}(M)$ sought in (\ref{eq:task2}).
That description is  unique up to basis change,
in the sense of \cite[Remark 3.8]{CS}, by the discussion in Section \ref{sec4}.
Our method is implemented in a 
 {\tt Macaulay2} command, called {\tt solvePDE} and to 
be described in Section~\ref{sec5}.

We now explain the ingredients of Theorem \ref{thm:Palamodov_Ehrenpreis}
coming from commutative algebra
(cf.~\cite{Eisenbud}).
 For a vector $m \in R^k$, the quotient $(M:m)$ is
the ideal $\{f \in R : fm \in M\}$. A prime ideal $P_i \subseteq R$ is 
{\em associated to} $M$ if there exists $m \in R^k$ such that
$(M:m) = P_i$. Since $R$ is Noetherian, the list of
associated primes of $M$ is finite, say $P_1,\ldots,P_s$.
If $s=1$ then the module $M$ is called {\em primary} or {\em $P_1$-primary}.
A {\em primary decomposition} of $M$ is a list of primary submodules
$M_1,\ldots,M_s \subseteq R^k$ 
where $M_i$ is $P_i$-primary and
$\, M  =  M_1 \cap M_2 \cap \cdots \cap M_s $.

Primary decomposition is a standard topic in commutative algebra.
It is usually presented for ideals $(k=1)$,
as in \cite[Chapter 3]{INLA}. The case of modules is
analogous.
The latest version of {\tt Macaulay2} has an implementation of
primary decomposition for modules, as described in \cite[Section~2]{CC}.
Given $M$, 
the primary module $M_i$ is not unique if $P_i$
is an embedded prime.


The contribution of the primary module $M_i$ to $M$
is quantified by a positive integer $m_i$, called 
  the  arithmetic length of $M$ along $P_i$.
To define this, we consider the localization
$(R_{P_i})^k/M_{P_i}$. This is a module over
the local ring $R_{P_i}$. The {\em arithmetic length} 
is the length of the largest submodule of finite length in
$(R_{P_i})^k/M_{P_i}$; in symbols,
$m_i = {\rm length} \bigl( H^0_{P_i} ((R_{P_i})^k/ M_{P_i})\bigr)$.
The sum $m_1 + \cdots + m_s$ is an invariant of the module $M$,
denoted ${\rm amult}(M)$, and known as the {\em arithmetic multiplicity} of $M$.
These numbers can be computed in {\tt Macaulay2} as in
\cite[Remark 5.1]{CS}. We return to these invariants in Theorem~\ref{thm:DPD}.

To make the connection to Theorem \ref{thm:Palamodov_Ehrenpreis},
we set $V_i = V(P_i)$ for $i=1,2,\ldots,s$. Thus, $V_i$ is the
irreducible variety in $\CC^n$ defined by the prime ideal $P_i$
in $R = K[\partial_1,\ldots,\partial_n]$. The integer
$m_i$ is an invariant of the pair $(M,V_i)$: it 
measures the thickness
of the module $M$ along~$V_i$.

By taking the union of the irreducible varieties $V_i$ we obtain the variety
$$ V(M) \quad := \quad V_1 \,\cup\, V_2 \,\cup \,\cdots \,\cup\, V_s 
\quad \subset \,\, \CC^n.
$$
Algebraists refer to $V(M)$ as the {\em support} of $M$, while analysts call it 
the {\em characteristic variety} of $M$.
The support is generally reducible, with $\leq s$ irreducible components.
For instance, the module $M$ in Example \ref{ex:mod1a} has
six associated primes, and an explicit primary decomposition was given in
(\ref{eq:MPD}). However, the support $V(M)$ has only four irreducible components in $\CC^4$,
namely  one hyperplane, two $2$-dimensional planes, and one nonlinear surface
(twisted cubic).

\begin{remark} If $k=1$ and $M=I$ then the support $V(M)$ coincides with the
variety $V(I)$ attached as usual to an ideal $I$, namely 
the common zero set in $\CC^n$ of all polynomials in $I$.
\end{remark}

The relationship between modules and ideals mirrors
the relationship between PDE
for vector-valued functions and related PDE
for scalar-valued functions.
To pursue this a bit further,
we now define two ideals that are naturally associated with a given module $M
\subseteq R^k$.

The first ideal is the {\em annihilator}
of the quotient module $R^k/M = {\rm coker}_R(A)$, which is 
$$ I \,\, := \,\,{\rm Ann}_R(R^k/M) \,\,  = \,\,
\big\{ \,f \in R \,:\, f m \in M \,\,\hbox{for all} \,\, m \in R^k  \bigr\} . $$
The second is the zeroth {\em Fitting ideal} of $R^k/M$,
which is the ideal in $R$ generated by the $k \times k$ minors of the 
presentation matrix $A$. It is independent of the choice of $A$, and we write
$$ J \,\, := \,\, {\rm Fitt}_0(R^k/M)  \,\,= \,\, \bigl\langle
\hbox{$\,k \times k$ subdeterminants of $A$}\,\bigr\rangle. $$
We are interested in the affine varieties in $\CC^n$ defined by these ideals.
They are denoted by $V(I)$ and $V(J)$ respectively. The following is
a standard result in commutative algebra.

\begin{proposition} \label{prop:MIJ}
The three varieties above are equal for every submodule $M$ of $R^k$, that is,
\begin{equation}
\label{eq:VM}
 V(M) \, = \, V(I) \, = \,V(J) \,\, \subseteq \,\, \CC^n. 
 \end{equation}
\end{proposition}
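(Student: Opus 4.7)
The plan is to prove the two equalities $V(M) = V(I)$ and $V(I) = V(J)$ separately, in both cases appealing to standard results in commutative algebra as compiled in \cite{Eisenbud}. I will work throughout with the finitely generated $R$-module $N := R^k/M$, noting that $I = {\rm Ann}_R(N)$ and $J = {\rm Fitt}_0(N)$ depend only on $N$ and not on the chosen presentation.

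For the first equality $V(M) = V(I)$, I would use the following characterization of support. A prime $P$ of $R$ lies in the support ${\rm Supp}(N)$ if and only if $N_P \neq 0$, which for finitely generated $N$ is equivalent to ${\rm Ann}_R(N) \subseteq P$; hence ${\rm Supp}(N) = V(I)$. On the other hand, the support of a finitely generated module over a Noetherian ring equals the union $\bigcup_{i=1}^s V(P_i)$ taken over its associated primes, because every prime containing $I$ contains some minimal prime of $I$, and each minimal prime of $I$ appears among $P_1,\dotsc,P_s$. Identifying this union with $V(M)$ (as defined in the paper) gives $V(M) = V(I)$.

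For the second equality $V(I) = V(J)$, I would invoke the classical sandwich inequalities between the Fitting ideal and the annihilator: if $N$ is generated by $k$ elements, then
\begin{equation*}
{\rm Fitt}_0(N) \,\subseteq\, {\rm Ann}_R(N) \qquad \text{and} \qquad {\rm Ann}_R(N)^k \,\subseteq\, {\rm Fitt}_0(N).
\end{equation*}
The first inclusion holds because each $k \times k$ minor of $A$ annihilates $N$ (it expresses a relation among the generators of $N$, up to a factor that kills the corresponding quotient). The second follows from a determinantal identity, showing that any product of $k$ elements of ${\rm Ann}_R(N)$ can be written as a $\ZZ$-linear combination of $k \times k$ minors of $A$. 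Taking radicals collapses both containments to $\sqrt{I} = \sqrt{J}$, and taking vanishing loci yields $V(I) = V(J)$.

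The only step with any subtlety is the containment ${\rm Ann}_R(N)^k \subseteq {\rm Fitt}_0(N)$, which requires a short determinantal argument rather than a direct manipulation; everything else is a clean application of standard definitions. Combining the two chains of equalities gives $V(M) = V(I) = V(J) \subseteq \CC^n$, as claimed.
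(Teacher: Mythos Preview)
Your proof is correct and is essentially an unpacking of the paper's own proof, which consists entirely of the single citation ``This follows from \cite[Proposition 20.7]{Eisenbud}.'' That proposition is precisely the sandwich $\,{\rm Fitt}_0(N) \subseteq {\rm Ann}_R(N)\,$ and $\,{\rm Ann}_R(N)^k \subseteq {\rm Fitt}_0(N)$, together with the identification of the support with $V({\rm Ann}_R(N))$; you have simply spelled out the argument rather than delegating it.
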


\begin{proof}
This follows from \cite[Proposition 20.7]{Eisenbud}.
\end{proof}

 \begin{remark}
It can happen that ${\rm rank}(A) < k$, 
for instance when  $k > l$.  In that case, 
$I = J = \{ 0 \}$ and 
 $V(M) = \CC^n$.
Geometrically, the module $M$ furnishes a coherent sheaf
that is supported on the entire space $\CC^n$.
For instance, let $k=n=2,l=1$ and $A = \binom{\phantom{-}\partial_1}{-\partial_2}$.
The PDE asks for pairs $(\psi_1,\psi_2)$ such that
$\partial \psi_1 /\partial z_1 =  \partial \psi_2 /\partial z_2 $.
We see that $\operatorname{Sol}(M)$ consists of all pairs
$\bigl( \partial \alpha/ \partial z_2\,,\partial \alpha/ \partial z_1 \big)$, where $\alpha 
= \alpha(z_1,z_2)$
runs over functions in two variables. In general, the left kernel of $A$
furnishes differential operators for creating solutions to $M$.
 \end{remark}

The following example shows that \eqref{eq:VM}
is not true at the level of schemes (cf.~Section~\ref{sec6}).

\begin{example}[$n=k=3,l=5$]
Let $R = \CC[\partial_1,\partial_2,\partial_3]$ and $M $ the submodule of $R^3$ given~by 
$$ A \,\, = \,\,
\begin{pmatrix}
\,\partial_1 &  0 &  0 & 0 &  0 \,\,\\
\, 0 & \partial_1^2 & \partial_2 & 0 & 0 \,\,\\
\, 0 & 0 & 0 & \partial_1 & \,\partial_3 \,\, 
\end{pmatrix} . $$
We find $I = \langle \partial_1^2, \partial_1 \partial_2 \rangle \, \supset \,
J = \langle \partial_1^4, \partial_1^3 \partial_3,
\partial_1^2 \partial_2 , \partial_1 \partial_2 \partial_3 \rangle$.
The sets of associated primes are
$$  \begin{matrix}
& \operatorname{Ass}(I) & = &  \bigl\{ \langle \partial_1 \rangle, \langle \partial_1, \partial_2 \rangle \bigr\} 
&\qquad & {\rm with} \,\,{\rm amult}(I) =2 \\ 
  \subset & \operatorname{Ass}(M) & = & 
 \bigl\{ \langle \partial_1 \rangle, \langle \partial_1, \partial_2 \rangle ,
 \langle \partial_1, \partial_3 \rangle  \bigr\} & \qquad & {\rm with} \,\,{\rm amult}(M) = 4 
 \\
 \subset & \operatorname{Ass}(J) & = &
 \bigl\{ \langle \partial_1 \rangle, \langle \partial_1, \partial_2 \rangle ,
 \langle \partial_1, \partial_3 \rangle,
 \langle \partial_1, \partial_2, \partial_3 \rangle
   \bigr\} & \qquad & {\rm with} \,\,{\rm amult}(J) = 5
 \end{matrix}
 $$
  The support $V(M)$ is a plane in $3$-space, on which
 $I$ and $J$ define different scheme structures.
 Our module $M$ defines a coherent sheaf on that plane
 that lives between these two schemes.
We consider the PDE in each of the three cases, 
we compute the Noetherian multipliers,
and from this we derive the general solution.
To begin with, functions in $  {\rm Sol}(J)$ have the form
$$ \alpha(z_2,z_3) \,+\, z_1 \beta(z_3) \,+\, z_1^2 \gamma(z_3) \,+\,
z_1 \delta(z_2) \,+\, c \cdot z_1^3 . $$ 
The first two terms give functions in the subspace ${\rm Sol}(I)$.
Elements in ${\rm Sol}(M)$ are~vectors
 $$ \bigl( \,\rho(z_2,z_3) \,,\, \sigma(z_3) + z_1 \tau(z_3)\,,\, \omega(z_2) \,\bigr) .
 $$
 These represent all functions $\CC^3 \rightarrow \CC^3$ that satisfy
 the five PDE given by the matrix $A$.
 \end{example}
 
 \begin{remark}
The quotient $R/I$ embeds naturally into the direct sum of
$k$ copies of $R^k/M$, via $1 \mapsto e_j$.
This implies ${\rm Ass}(I) \subseteq {\rm Ass}(M)$.
 It would be worthwhile to  understand
how the differential primary decompositions of $I,J$ and $M$ are related,
and to study implications for the solution spaces
${\rm Sol}(I)$, ${\rm Sol}(J)$ and ${\rm Sol}(M)$. What relationships
hold between these?
 \end{remark}

\begin{lemma}\label{cor:module_exponential_solution}
Fix a $k \times l$ matrix $A(\partial)$ and its module $M \subseteq R^k$
as above.  A point ${\bf u}\in \CC^n$ lies in $V(M)$ if and only if
there exist constants  $c_1,\dotsc,c_k \in \CC$, not all zero, such that
  \begin{align} \label{eq:constant_exponential_solution}
    \begin{pmatrix}
      c_1 \\ \vdots \\ c_k
    \end{pmatrix} \exp(u_1z_1 + \dotsb + u_nz_n) \,\in\, \operatorname{Sol}(M).
  \end{align}
More precisely, \eqref{eq:constant_exponential_solution} holds 
if and only if  $\,  (c_1,\dotsc,c_k) \cdot A({\bf u}) = 0 $.
  \end{lemma}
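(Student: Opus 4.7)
The plan is to prove the ``more precisely'' statement first, since it immediately implies the biconditional in the first part via a rank argument combined with Proposition~\ref{prop:MIJ}.

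First I would establish the scalar fact recalled in Remark 2.1: for any $p \in R$ and any $\mathbf{u} \in \CC^n$, one has $p(\partial) \bullet \exp(\mathbf{u}^t\mathbf{z}) = p(\mathbf{u}) \exp(\mathbf{u}^t\mathbf{z})$. This is checked on monomials $\partial^\alpha$ and extended by linearity. Let $a_1(\partial),\dotsc,a_l(\partial) \in R^k$ be the columns of $A$, with entries $a_{ij}(\partial)$. Applying the coordinate-wise action recalled around \eqref{eq:doubleaction} to the vector-valued function $\mathbf{c}\exp(\mathbf{u}^t\mathbf{z})$ gives
\begin{equation*}
a_j(\partial) \bullet \bigl(\mathbf{c}\exp(\mathbf{u}^t\mathbf{z})\bigr) \,\,=\,\, \Bigl(\sum_{i=1}^k c_i\, a_{ij}(\mathbf{u})\Bigr)\exp(\mathbf{u}^t\mathbf{z})\,\,=\,\,\bigl((c_1,\dotsc,c_k)\cdot a_j(\mathbf{u})\bigr)\exp(\mathbf{u}^t\mathbf{z}).
\end{equation*}
Since $\exp(\mathbf{u}^t\mathbf{z})$ is nowhere vanishing, this is zero for every $j=1,\dotsc,l$ if and only if the row vector $(c_1,\dotsc,c_k)\cdot A(\mathbf{u}) \in \CC^l$ is zero. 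Because $a_1,\dotsc,a_l$ generate $M$, the vector $\mathbf{c}\exp(\mathbf{u}^t\mathbf{z})$ lies in $\operatorname{Sol}(M)$ iff these generators annihilate it. This proves the ``more precisely'' clause.

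For the first equivalence, a nonzero $\mathbf{c}$ satisfying $\mathbf{c}\cdot A(\mathbf{u})=0$ exists iff the rows of the $k\times l$ matrix $A(\mathbf{u})$ are linearly dependent over $\CC$, i.e.\ iff $\operatorname{rank} A(\mathbf{u}) < k$. I would split into two cases. If $k\leq l$, the rank drops below $k$ precisely when every $k\times k$ minor of $A(\mathbf{u})$ vanishes, which is the condition $\mathbf{u}\in V(J)$ with $J=\operatorname{Fitt}_0(R^k/M)$. If $k>l$, then $\operatorname{rank} A(\mathbf{u}) \leq l < k$ automatically, so nonzero $\mathbf{c}$ always exists; correspondingly $J=\{0\}$ and $V(J)=\CC^n$. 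In either case, the existence of a nonzero $\mathbf{c}$ annihilating $A(\mathbf{u})$ is equivalent to $\mathbf{u}\in V(J)$. Invoking Proposition~\ref{prop:MIJ} then gives $V(J)=V(M)$, which completes the proof.

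I do not expect a serious obstacle here: the only subtlety is remembering to handle the ``wide'' case $k>l$ separately, since otherwise one might be tempted to write a vacuous condition on $k\times k$ minors. The argument is otherwise a direct linear-algebraic unpacking of the action of $R$ on exponentials plus the standard fact that Fitting ideals cut out the rank-deficiency locus.
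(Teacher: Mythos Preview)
Your proof is correct and follows essentially the same route as the paper: compute the action of each column of $A(\partial)$ on $\mathbf{c}\exp(\mathbf{u}^t\mathbf{z})$ to reduce \eqref{eq:constant_exponential_solution} to $(c_1,\dotsc,c_k)\cdot A(\mathbf{u})=0$, then pass to the rank condition and invoke Proposition~\ref{prop:MIJ}. The only cosmetic difference is that the paper identifies the rank-drop locus with $V(I)$ while you go via $V(J)$, and your explicit case split $k\leq l$ versus $k>l$ is a bit more detailed than the paper's one-line appeal to the rank.
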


  \begin{proof}
  Let $a_{ij}(\partial)$ denote the entries of the matrix $A(\partial)$. Then
     \eqref{eq:constant_exponential_solution} holds if and only if
  \begin{align*}
    \sum_{i = 1}^k a_{ij}(\partial) \bullet (c_i \exp(u_1z_1+\dotsb + u_nz_n)) \,=\, 0 
    \quad \text{ for all } j = 1,\dotsc, l.
  \end{align*}
  This is equivalent to
  \begin{align*}
    \sum_{i=1}^k c_i \,a_{ij}({\bf u}) \exp(u_1z_1+\dotsb + u_nz_n) \,=\, 0 \quad \text{ for all } j = 1,\dotsc,l.
  \end{align*}
This condition holds if and only if 
$\,  (c_1,\dotsc,c_k) \cdot A({\bf u}) $ is the zero vector in $\CC^l$.
We conclude that, for any given ${\bf u} \in \CC^n$,
the previous condition is satisfied for some $c \in \CC^k \backslash \{0\}$
if and only if ${\rm rank}(A({\bf u}))  < k$ if and only if ${\bf u} \in V(M) = V(I)$.
Here we use Proposition \ref{prop:MIJ}.
\end{proof}

Here is an alternative way to interpret the characteristic variety of a system of PDE:

\begin{proposition} \label{prop:46}
  The solution space $\,\operatorname{Sol}(M)$ contains an
  exponential solution 
  $ q({\bf z}) \cdot {\rm exp}( {\bf u}^t {\bf z}) $
    if and only if $\,{\bf u} \in V(M)$. Here $q $ is some
    vector of $k$ polynomials in $n$ unknowns,  as in (\ref{eq:polexpfun}).
\end{proposition}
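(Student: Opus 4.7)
The forward direction is essentially immediate from Lemma \ref{cor:module_exponential_solution}: given $\mathbf{u} \in V(M)$, take $q$ to be the constant vector $(c_1,\dotsc,c_k)$ supplied by the lemma.

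For the reverse direction, suppose $\psi(\mathbf{z}) = q(\mathbf{z}) \exp(\mathbf{u}^t \mathbf{z}) \in \operatorname{Sol}(M)$ with $q \neq 0$. The key tool is the standard shift identity
\[
p(\partial) \bullet \bigl(f(\mathbf{z})\,\exp(\mathbf{u}^t \mathbf{z})\bigr) \,=\, \bigl(p(\partial + \mathbf{u}) \bullet f\bigr)(\mathbf{z})\,\exp(\mathbf{u}^t \mathbf{z})
\]
for any $p \in R$, which follows by induction from $\partial_i \bullet \exp(\mathbf{u}^t \mathbf{z}) = u_i \exp(\mathbf{u}^t \mathbf{z})$ and the product rule. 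Applying this to each column of $A = (a_{ij}(\partial))$ and dividing out the nonvanishing exponential, the condition $A \bullet \psi = 0$ becomes the polynomial identity
\[
\sum_{i=1}^k a_{ij}(\partial + \mathbf{u}) \bullet q_i(\mathbf{z}) \,=\, 0 \qquad \text{for each } j = 1,\dotsc,l.
\]

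Next I would pass to top degree. Let $d = \max_i \deg q_i$ and let $q_i^{(d)}$ be the homogeneous component of $q_i$ of degree $d$ (zero if $\deg q_i < d$). Expanding $a_{ij}(\partial + \mathbf{u}) = a_{ij}(\mathbf{u}) + (\text{terms containing at least one } \partial_r)$, every genuine derivative strictly lowers degree; hence the degree-$d$ part of the displayed identity is simply $\sum_i a_{ij}(\mathbf{u})\, q_i^{(d)}(\mathbf{z}) = 0$ for every $j$. Since $q \neq 0$, at least one $q_i^{(d)}$ is nonzero. Pick any monomial $\mathbf{z}^\gamma$ of degree $d$ occurring in some $q_i^{(d)}$, and let $c_i$ be its coefficient in $q_i^{(d)}$. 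The resulting constant vector $c = (c_1,\dotsc,c_k) \in \CC^k$ is nonzero and satisfies $c \cdot A(\mathbf{u}) = 0$. By the second half of Lemma \ref{cor:module_exponential_solution} (equivalently, by Proposition \ref{prop:MIJ}), this places $\mathbf{u}$ in $V(M)$.

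The only mildly delicate step is the degree-tracking argument, where I must recognize that the shift $\partial \mapsto \partial + \mathbf{u}$ contributes the scalar $a_{ij}(\mathbf{u})$ as its constant term while all genuine derivative terms strictly drop degree, so that the leading form of $q$ furnishes the desired constant nullvector of $A(\mathbf{u})$. Everything else is formal manipulation.
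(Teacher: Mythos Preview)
Your proof is correct, but it takes a different route from the paper's. The paper exploits the fact that $\operatorname{Sol}(M)$ is an $R$-module: starting from $q(\mathbf{z})\exp(\mathbf{u}^t\mathbf{z}) \in \operatorname{Sol}(M)$, it observes that applying $\partial_i$ keeps us in $\operatorname{Sol}(M)$, and after subtracting the scalar multiple $u_i\, q(\mathbf{z})\exp(\mathbf{u}^t\mathbf{z})$ one obtains $(\partial_i \bullet q)(\mathbf{z})\exp(\mathbf{u}^t\mathbf{z}) \in \operatorname{Sol}(M)$ with a strictly lower-degree polynomial factor. Iterating drives $q$ down to a nonzero constant vector, and then Lemma~\ref{cor:module_exponential_solution} applies. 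Your argument instead uses the shift identity $p(\partial)\bullet(f\exp(\mathbf{u}^t\mathbf{z})) = (p(\partial+\mathbf{u})\bullet f)\exp(\mathbf{u}^t\mathbf{z})$ once, then isolates the top-degree homogeneous component of the resulting polynomial identity to read off a constant nullvector of $A(\mathbf{u})$ directly. The paper's approach is slightly more conceptual---it foregrounds the $R$-module structure of the solution space, a theme that recurs later (e.g.\ in the discussion of socle solutions)---while yours is a clean one-shot computation that never needs to iterate. Both arrive at the same constant vector: your leading coefficient $c$ is exactly what the paper obtains after applying a suitable monomial $D = \partial^{\gamma}$ to $q$.
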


\begin{proof}
  One direction is clear from Lemma \ref{cor:module_exponential_solution}. Next, suppose $  q(\mathbf{z}) \exp({\bf u}^t {\bf z}) \in \operatorname{Sol}(M)$.
The partial derivative of this function with respect to any unknown $z_i$
is also in ${\rm Sol}(M)$. Hence
  \begin{align*}
    \partial_i \bullet (q({\bf z}) \exp({\bf u}^t {\bf z}))
 \,    =\, (\partial_i \bullet q({\bf z})) \exp({\bf u}^t {\bf z})\,+\,
  u_i q({\bf z}) \exp({\bf u}^t {\bf z}) \,\in\, \operatorname{Sol}(M)
  \quad \hbox{for $i=1,\ldots,n$.}
  \end{align*}
  Hence the exponential function
$  (\partial_i \bullet q({\bf z})) \exp({\bf u}^t {\bf z})$
is in ${\rm Sol}(M)$.
Since the degree of $\partial_i \bullet q({\bf z})$ is less than that of $q({\bf z})$,
  we can find a sequence $D = \partial_{i_1} \partial_{i_2} \dotsb \partial_{i_s}$ such that $D \bullet q$ is a nonzero constant vector and $(D \bullet q) \exp({\bf u}^t {\bf z})
   \in \operatorname{Sol}(M)$.
  Lemma \ref{cor:module_exponential_solution} now implies that ${\bf u} \in V(M)$.
\end{proof}

The solution space ${\rm Sol}(M)$ to a submodule
$M \subseteq R^k$  is a vector space over $\CC$.
It is infinite-dimensional
 whenever $V(M)$ is a variety of positive dimension.
 This follows from Lemma~\ref{cor:module_exponential_solution}
 because there are infinitely many points
${\bf u}$ in $V(M)$. 
However, if
$V(M)$ is a finite subset of $\CC^n$, then
 ${\rm Sol}(M)$ is 
finite-dimensional. 
This is the content of the next theorem.

\begin{theorem} \label{thm:finitedim}
Consider a module $M \subseteq R^k$, viewed as a system of linear PDE.
  Its solution space $\operatorname{Sol}(M)$ is  finite-dimensional  over
   $\mathbb{C}$  if and only if $V(M)$ has dimension $0$.
  In this case,  ${\rm dim}_\CC \operatorname{Sol}(M) = {\rm dim}_K(R^k/M) =
   {\rm amult}(M)$. There is a basis of ${\rm Sol}(M)$ given by vectors
   $ \,q({\bf z})\, {\rm exp}({\bf u}^t {\bf z})$, where
   ${\bf u} \in V(M)$ and $q({\bf z})$ runs over a finite set of polynomial vectors,
   whose cardinality is the length of $M$ along the maximal ideal
      $\langle x_1 - u_1,\ldots,x_n-u_n \rangle$.
        There exist polynomial solutions if and only if  $\mathfrak{m} = \langle x_1,\dotsc,x_n \rangle$ is an associated prime of $M$. 
  The polynomial solutions are found by solving
     the PDE given by the $\mathfrak{m}$-primary component of~$M$.
\end{theorem}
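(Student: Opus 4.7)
The plan is to dispatch the forward implication immediately, and then use the Fundamental Principle (Theorem~\ref{thm:Palamodov_Ehrenpreis}) to pin down both the dimension count and the shape of the basis, reducing the hard direction to a statement about Macaulay inverse systems.

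For the forward direction, if $\dim V(M) > 0$ then $V(M) \subset \CC^n$ is infinite, and by Lemma~\ref{cor:module_exponential_solution} each $\mathbf{u} \in V(M)$ produces a nonzero exponential solution $\mathbf{c}\,\exp(\mathbf{u}^t\mathbf{z}) \in \operatorname{Sol}(M)$. Pure exponentials with distinct frequencies are $\CC$-linearly independent, so $\dim_\CC \operatorname{Sol}(M) = \infty$. This is essentially the remark already made in the paragraph preceding the theorem.

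Conversely, assume $\dim V(M) = 0$, so every associated variety $V_i$ is a single point $\mathbf{u}_i$. Apply Theorem~\ref{thm:Palamodov_Ehrenpreis}: any solution $\psi$ admits an integral representation $\psi(\mathbf{z}) = \sum_i \sum_j \int_{V_i} B_{ij}(\mathbf{x},\mathbf{z})\exp(\mathbf{x}^t\mathbf{z})\, d\mu_{ij}(\mathbf{x})$. A bounded measure on a singleton is a scalar multiple of the Dirac delta, so each integral collapses to $c_{ij} B_{ij}(\mathbf{u}_i,\mathbf{z})\exp(\mathbf{u}_i^t\mathbf{z})$ for a scalar $c_{ij}\in \CC$. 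Hence $\operatorname{Sol}(M)$ is spanned by the $\sum_i m_i = \operatorname{amult}(M)$ polynomial-times-exponential vectors $B_{ij}(\mathbf{u}_i,\mathbf{z})\exp(\mathbf{u}_i^t\mathbf{z})$, giving the upper bound $\dim_\CC \operatorname{Sol}(M) \leq \operatorname{amult}(M)$.

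To upgrade to equality and to match $\dim_K R^k/M$, I would reduce to a local statement via primary decomposition. Since all $P_i = \mathfrak{m}_i$ are distinct maximal ideals, they are pairwise comaximal; each primary component $M_i \supseteq \mathfrak{m}_i^{r_i} R^k$, so the $M_i$ are themselves comaximal as submodules of $R^k$. The Chinese Remainder Theorem gives $R^k/M \cong \bigoplus_i R^k/M_i$, and the injective cogenerator property of $\mathcal{F}$ dualizes this into $\operatorname{Sol}(M) = \bigoplus_i \operatorname{Sol}(M_i)$. After translating $\mathbf{u}_i$ to the origin, $M_i$ becomes $\mathfrak{m}$-primary with $\mathfrak{m} = \langle x_1,\ldots,x_n\rangle$, and all its solutions are polynomial, since the Ehrenpreis--Palamodov kernel evaluates to $B_{ij}(\mathbf{0},\mathbf{z})$. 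Macaulay's inverse system (cf.~\cite[Chapter~21]{Eisenbud}) then identifies $\operatorname{Sol}(M_i)$ with the polynomial orthogonal $M_i^\perp \subseteq \CC[\mathbf{z}]^k$ and yields $\dim_\CC M_i^\perp = \dim_K R^k/M_i = m_i$. Summing over $i$ gives both claimed equalities and exhibits the basis with $m_i$ elements above each $\mathbf{u}_i$.

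Finally, a polynomial solution corresponds to taking $\mathbf{u} = \mathbf{0}$ in the integral representation, which occurs precisely when $\mathfrak{m} = \langle x_1,\ldots,x_n \rangle$ is an associated prime of $M$; the polynomial solution space then equals $\operatorname{Sol}(M_\mathfrak{m}) = M_\mathfrak{m}^\perp$ for the $\mathfrak{m}$-primary component. The main obstacle is the lower bound $\dim_\CC \operatorname{Sol}(M) \geq \operatorname{amult}(M)$: one must know that the Noetherian multipliers in a minimal Ehrenpreis--Palamodov representation are genuinely $\CC$-independent as outputs, which follows either from the Macaulay duality invoked above after the reduction to a single point, or from the minimality results of \cite{CS}.
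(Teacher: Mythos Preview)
Your argument is sound, but it takes a different route from the paper. The paper does not give a self-contained proof at all: it simply cites Oberst \cite{oberst96} and Bj\"ork \cite{BJORK}, and remarks that the scalar case in \cite{INLA} is handled via the power series ring as an injective cogenerator. That approach is more elementary than yours, because Macaulay duality in the power series ring already shows that every solution to an $\mathfrak{m}_{\mathbf{u}}$-primary module is a polynomial times $\exp(\mathbf{u}^t\mathbf{z})$ and simultaneously gives the exact dimension count, without invoking the full Ehrenpreis--Palamodov machinery. You instead import Theorem~\ref{thm:Palamodov_Ehrenpreis} as a black box to force the exponential-polynomial shape and the upper bound, and then appeal to CRT and Macaulay duality for the lower bound; this is correct within the paper's logical framework (Theorem~\ref{thm:Palamodov_Ehrenpreis} is itself cited from the literature and does not depend on Theorem~\ref{thm:finitedim} here), but it is considerably heavier, and somewhat redundant since your Macaulay step already computes the dimension exactly. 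One small wrinkle you glossed over: when $K \subsetneq \CC$ the associated primes $P_i$ need not be of the form $\mathfrak{m}_{\mathbf{u}}$ for $\mathbf{u} \in K^n$, but rather correspond to Galois orbits of points; the equality $\dim_\CC \operatorname{Sol}(M) = \dim_K R^k/M$ still holds after base change to $\CC$, which is harmless but worth stating.
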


\begin{proof}
This is the main result in Oberst's article \cite{oberst96},
proved in the setting of  injective cogenerators $\mathcal{F}$.
The same statement for $\mathcal{F} = C^\infty(\Omega)$
 appears in \cite[Ch.~8, Theorem 7.1]{BJORK}.
 The scalar case $(k=1)$ is  found in \cite[Theorem 3.27]{INLA}.
 The proof given there uses solutions in the power series ring,
 which is an injective cogenerator,
   and it generalizes to modules.
 \end{proof}

By a {\em polynomial solution} we mean a vector $q({\bf z})$ whose
coordinates are polynomials.
The $\mathfrak{m}$-primary component in Theorem \ref{thm:finitedim}
is computed by a double saturation step. When $M=I$ is an  ideal then
 this double saturation is $I:(I:\mathfrak{m}^\infty)$, as seen in
 \cite[Theorem 3.27]{INLA}. For submodules $M$ of $R^k$ with $k \geq 2$, 
 we would compute
$ M : {\rm Ann}(R^k / (M : \mathfrak{m}^\infty) ) $.
  The inner colon
$(M:\mathfrak{m}^\infty)$ is the intersection of all primary components of $M$
whose variety $V_i$ does not contain the origin $0$. It is computed as
$(M:f) = \{m \in R^k: fm \in M \}$, where $f$ is a random homogeneous
polynomial of large degree.
The outer colon is the module $(M:g)$, where $g$ is a general polynomial in
the ideal $\operatorname{Ann}(R^k/(M:f))$.
See also \cite[Proposition 2.2]{CC}.

It is an interesting problem to identify polynomial solutions
when $V(M)$ is no longer finite, and to decide whether these are dense
in the infinite-dimensional space of all solutions. Here ``dense'' refers to the topology on
$\mathcal{F}$ used by Lomadze in \cite{Lomadze2019}.
The following result gives an algebraic characterization of the closure
in ${\rm Sol}(M)$  of the subspace of polynomial solutions.

\begin{proposition} \label{cor:aredense}
The polynomial solutions are dense in ${\rm Sol}(M)$ if and only if 
the origin $0$ lies in every associated variety $V_i$ of the module $M$.
If this fails then the topological closure of the space of
polynomial solutions $q({\bf z})$ to $M$ is
the solution space of $M : \operatorname{Ann}(R^k/(M : \mathfrak{m}^\infty))$.
\end{proposition}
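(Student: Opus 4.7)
The strategy is to introduce the auxiliary submodule
\[
M^{[0]} \,:=\, M : \operatorname{Ann}\!\left(R^k/(M : \mathfrak{m}^\infty)\right),
\]
which by the commutative-algebra discussion preceding the proposition is the intersection of those primary components of $M$ whose associated variety passes through the origin. My aim is to show that the topological closure of polynomial solutions of $M$ equals $\operatorname{Sol}(M^{[0]})$. The equivalence in the first half of the proposition then follows, since polynomial solutions are dense exactly when $M^{[0]} = M$, which happens precisely when every associated variety $V_i$ contains the origin.

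The first sub-claim is that polynomial solutions of $M$ coincide with those of $M^{[0]}$. Since $M \subseteq M^{[0]}$, every polynomial solution of $M^{[0]}$ is one of $M$. For the reverse inclusion I would invoke Macaulay local duality: the polynomial solutions of $N \subseteq R^k$ form the space $\operatorname{Hom}_R(R^k/N, \CC[\mathbf{z}])$, and because $\CC[\mathbf{z}]$ is the injective hull of $R/\mathfrak{m}$, this Hom-space depends only on the $\mathfrak{m}$-adic completion of $R^k/N$. Primary components of $M$ whose prime is not contained in $\mathfrak{m}$ become trivial after $\mathfrak{m}$-localization, so polynomial solutions of $M$ depend only on $M^{[0]}$.

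The second sub-claim is that polynomial solutions are dense in $\operatorname{Sol}(M^{[0]})$ in the Lomadze topology. By Theorem~\ref{thm:Palamodov_Ehrenpreis} applied to $M^{[0]}$, every solution admits an integral representation $\sum \int_{V_i} B_{ij}(\mathbf{x},\mathbf{z}) \exp(\mathbf{x}^t \mathbf{z})\, d\mu_{ij}(\mathbf{x})$, and every $V_i$ here contains the origin. The polynomial solutions arise as the subclass in which each $\mu_{ij}$ is a finite linear combination of the Dirac delta at $0 \in V_i$ and its $\mathbf{x}$-derivatives; the result is a polynomial in $\mathbf{z}$, and it is a genuine solution because $B_{ij}(\mathbf{x},\mathbf{z})\exp(\mathbf{x}^t\mathbf{z})$ solves $M^{[0]}$ identically in $\mathbf{x}\in V_i$, so that $\mathbf{x}$-differentiation followed by evaluation at $0$ preserves solution-ness. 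Density of such derivative-delta combinations among the admissible measures then yields density of the polynomial solutions. Conversely, if some $V_j \not\ni 0$, then Proposition~\ref{prop:46} furnishes an exponential solution $q(\mathbf{z})\exp(\mathbf{u}^t\mathbf{z})$ with $\mathbf{u} \in V_j\setminus\{0\}$, which cannot be a limit of polynomials in this topology.

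The main obstacle is the density step above: verifying that derivative-delta measures at the origin of each $V_i$ generate a dense subspace of the admissible measures in the Lomadze topology, uniformly across the primary components. The natural tool is the differential primary decomposition of Section~\ref{sec4}, which supplies a coherent family of Noetherian multipliers for each component and reduces density to a componentwise assertion amenable to direct analysis; once this is done, combining with the identification of $M^{[0]}$ completes the proof.
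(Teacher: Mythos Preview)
The paper does not give a self-contained proof: it simply states that the proposition is a reinterpretation of Lomadze's \cite[Theorem 3.1]{Lomadze2019}. So the comparison is between a bare citation and your attempt at an independent argument via Ehrenpreis--Palamodov.

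Your first sub-claim, that the polynomial solutions of $M$ and of $M^{[0]}$ agree, is correct and well-argued, and your identification of $M^{[0]}$ with the intersection of the primary components whose associated variety contains the origin matches the paper's discussion before the proposition. Where the gap lies is exactly where you flag it: the density step. The argument that ``derivative-delta combinations at $0$ are dense among the admissible measures'' is not a proof, for two reasons. First, you never specify the topology on $\mathcal{F}$ that Lomadze uses, so ``dense'' has no content yet. Second, even granting a topology, the Ehrenpreis--Palamodov representation is highly non-unique, so density at the level of measures does not transfer to density at the level of solutions; and conversely, a measure supported far from $0$ on $V_i$ need not be approximable by deltas at $0$ in any measure topology, even though the corresponding solution might still be a limit of polynomials. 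Your converse sketch has the same defect: that $q(\mathbf z)\exp(\mathbf u^t\mathbf z)$ with $\mathbf u\neq 0$ ``cannot be a limit of polynomials in this topology'' is asserted, not shown.

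The route that actually closes the gap, and is implicit in the Lomadze reference, is algebraic rather than measure-theoretic: because $\mathcal{F}$ is an injective cogenerator, the closure operator on subspaces of $\mathcal{F}^k$ is dual to an operation on submodules of $R^k$, so the closure of the polynomial solutions is automatically $\operatorname{Sol}(N)$ for a uniquely determined $N\supseteq M$. One then identifies $N$ with $M^{[0]}$ purely in terms of primary decomposition, using exactly your first sub-claim. Your framework is compatible with this, but the measure-approximation heuristic should be replaced by the cogenerator duality.
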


\begin{proof}
This proposition is our reinterpretation of 
Lomadze's result in \cite[Theorem 3.1]{Lomadze2019}.
\end{proof}

The result gives rise to algebraic algorithms for answering analytic questions about
a system of PDE.
The property in the first sentence can be decided by
running the primary decomposition algorithm in \cite{CC}.
For the second sentence, we need to compute a double saturation as above.
This can be carried out in \texttt{Macaulay2} as well.

\section{Differential Primary Decomposition}
\label{sec4}

We now shift gears and pass to a setting that is dual to the one we have seen so far.
Namely, we discuss differential primary decompositions \cite{CC, CS}.
That duality is  subtle and can be confusing at first sight.
To mitigate this, we introduce new notation. We set
$x_i = \partial_i = \partial_{z_i}$ for $i=1,\ldots,n$. Thus $R$ is now
the polynomial ring $K[x_1,\ldots,x_n]$. This is the
notation we are used to from  algebra courses (such as~\cite{INLA}).
We write $\partial_{x_1},\ldots,\partial_{x_n}$ for the
differential operators corresponding to $x_1,\ldots,x_n$.
Later on, we also identify $z_i  = \partial_{x_i}$, 
and we think of the unknowns ${\bf x}$ and ${\bf z}$ in
the multipliers $B_i({\bf x},{\bf z})$ as dual in the sense
of the Fourier transform.

The ring of differential operators on the polynomial ring $R$ is the Weyl algebra
$$ D_n \,=\, R \langle \partial_{x_1},\ldots,\partial_{x_n} \rangle \,=\,
K \langle x_1,\ldots,x_n, \partial_{x_1},\ldots,\partial_{x_n} \rangle . $$
The $2n$ generators commute, except for the $n$
relations $\partial_{x_i} x_i - x_i \partial_{x_i} = 1$, which expresses
  the Product Rule from Calculus. Elements in the Weyl algebra $D_n$
are linear differential operators with polynomial coefficients. 
We write $\delta \bullet p$ for the result of applying $\delta \in D_n$ to a
polynomial $p = p({\bf x})$ in $ R$. For instance, $x_i \bullet p = x_i \,p$
and $\partial_{x_i} \bullet p = \partial p/\partial x_i$.
 Let  $D_n^k$ denote
the  $k$-tuples of differential operators in $D_n$. 
These operate on the free module $R^k$ as follows:
$$ D_n^k \times R^k \rightarrow R \,:\, (\delta_1,\ldots,\delta_k) \bullet (p_1,\ldots,p_k)
\,= \, \sum_{j=1}^k \delta_j \bullet p_j. $$

Fix a submodule $M$ of $R^k$ and let $P_1,\ldots,P_s$ be its associated primes,
as in Section \ref{sec3}.
A {\em differential primary decomposition} of $M$ is a list
$\mathcal{A}_1,\ldots,\mathcal{A}_s$ of finite subsets of $D_n^k$ such that
\begin{equation}
\label{eq:modulemembership}
M \,\,\, = \,\,\, \bigl\{ \,m \in R^k \,: \, \delta \bullet m \in P_i 
\,\,\,\,\hbox{for all}\,\,\, \delta \in \mathcal{A}_i \,\,\hbox{and}\,\, i=1,2, \ldots,s \,\bigr\}. 
\end{equation}
This is a membership test for the module $M$
using differential~operators. This test is geometric since
the polynomial $\delta \bullet m $ lies in $P_i $ if and only if 
it vanishes on the variety $V_i=V(P_i)$.

\begin{theorem}\label{thm:diff_prim_dec_existence}
  Every submodule $M$ of $R^k$ has a differential primary decomposition. We can choose the sets $\mathcal{A}_1,\dotsc,\mathcal{A}_s$ such that $|\mathcal{A}_i|$ is the arithmetic length of $M$ along the prime $P_i$.
\end{theorem}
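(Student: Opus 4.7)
The plan is to combine classical primary decomposition with a local Macaulay-type duality argument, reducing the module-theoretic statement to a construction at each associated prime separately. Since $R^k$ is Noetherian, first fix a classical primary decomposition $M = M_1 \cap \cdots \cap M_s$ with $M_i$ a $P_i$-primary submodule of $R^k$. Membership in $M$ is the conjunction of membership in each $M_i$, so it suffices to produce, for each $i$, a finite set $\mathcal{A}_i \subset D_n^k$ of cardinality $m_i$ such that
$$
M_i \,=\, \{\, m \in R^k \,:\, \delta \bullet m \in P_i \ \text{ for every } \delta \in \mathcal{A}_i \,\}.
$$
Intersecting these membership tests then recovers the description (\ref{eq:modulemembership}) for $M$.

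To construct $\mathcal{A}_i$, fix a $P = P_i$-primary submodule $N = M_i$ and localize at $P$. Because $P$ is the unique associated prime of $R^k/N$, every element outside $P$ acts as a non-zerodivisor on $R^k/N$, so the canonical map $R^k/N \hookrightarrow (R_P)^k/N_P$ is injective and hence $N = N_P \cap R^k$. By definition of the arithmetic length, $(R_P)^k/N_P$ is a finite-length $R_P$-module of length exactly $m = m_i$. Thus checking membership in $N$ reduces to checking that the image in this finite-length local module vanishes, which requires only a separating collection of functionals of cardinality at most $m$.

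The remaining step realizes such a separating collection as differential operators. Matlis duality over the Noetherian local ring $R_P$ identifies the dual of $(R_P)^k/N_P$ with a submodule of the injective hull of $\kappa(P) = R_P / P R_P$ of the same length $m$. One represents this dual concretely by elements of $D_n^k$ whose action on $R^k$, reduced modulo $P$, produces a $\kappa(P)$-valued functional on $R^k/N$; this is the Noetherian-multiplier realization of the Macaulay inverse system along $P$ developed in \cite{CS, CC}. Choosing a basis of the dual and lifting it yields $m$ operators $\delta_1, \ldots, \delta_m \in D_n^k$; set $\mathcal{A}_i := \{\delta_1, \ldots, \delta_m\}$. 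By construction, the forward implication ``$m \in N \Rightarrow \delta_j \bullet m \in P$ for all $j$'' holds, and the converse follows because the joint kernel of these functionals on $(R_P)^k/N_P$ is trivial, combined with $N = N_P \cap R^k$.

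The main obstacle is the third step: producing operators genuinely in $D_n^k$, with polynomial coefficients in $x_1, \ldots, x_n$, rather than in $D_n^k$ extended by the fraction field of $R/P$. Only then is $\delta \bullet m$ an honest polynomial that can be reduced modulo $P_i$. This amounts to a careful lifting of the Matlis-dual functionals across the localization map $R \to R_P$ and is the technical heart of the statement. The fact that exactly $m_i$ operators suffice, matching the arithmetic length rather than some larger generating count, is the sharpness result proved in \cite{CS}; any redundancy in the chosen basis of the Matlis dual would produce a strictly larger $\mathcal{A}_i$, so minimality hinges on using a length-$m_i$ basis at each prime.
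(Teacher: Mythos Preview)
The paper itself does not prove this theorem; it defers entirely to \cite{CS} and \cite{CC}. So there is no ``same approach'' to compare against, only the question of whether your argument is correct.

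Your outline establishes the first sentence of the theorem (existence of a differential primary decomposition) but contains a genuine error in the second sentence (achieving $|\mathcal{A}_i| = m_i$). The sentence ``By definition of the arithmetic length, $(R_P)^k/N_P$ is a finite-length $R_P$-module of length exactly $m = m_i$'' is false when $P_i$ is an embedded prime. The arithmetic length is defined as the length of $H^0_{P_i}\bigl((R_{P_i})^k/M_{P_i}\bigr)$, the finite-length torsion submodule of the localization of $R^k/M$ itself, not of $R^k/M_i$. For a $P_i$-primary component $M_i$ with $P_i$ embedded, the length of $(R_{P_i})^k/(M_i)_{P_i}$ is typically strictly larger than $m_i$, and this length is moreover not canonical since $M_i$ is not unique. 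A concrete instance: for $I = \langle x^2, xy\rangle \subset K[x,y]$ with embedded prime $\mathfrak{m} = \langle x,y\rangle$, one has $m_2 = 1$, yet every $\mathfrak{m}$-primary component $Q$ in a decomposition $I = \langle x\rangle \cap Q$ has length at least $2$. Your construction would produce $\mathcal{A}_2$ of size $\geq 2$, not $1$.

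The missing idea is that, for an embedded prime $P_i$, one must not describe the full primary component $M_i$ but only its \emph{excess} over the components for primes strictly contained in $P_i$. Concretely, with $U = M R_{P_i}^k \cap R^k$ and $V = (U : P_i^\infty)$, the relevant object is the quotient of the Matlis dual of $U$ by that of $V$, which has $\KK$-dimension exactly $m_i$; operators lifting a basis of this quotient give the desired $\mathcal{A}_i$. This is precisely the mechanism in Lines~\ref{alg:line2}--\ref{alg:line3} and \ref{alg:line15} of Algorithm~\ref{alg:solvePDE}, and is the point of the Remark following the theorem in the paper: treating each primary component in isolation yields too many operators.
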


\begin{proof}[Proof and discussion]
  The result is proved in \cite{CS} and further refined   in \cite{CC}.
  These sources also develop an algorithm. We shall explain this
   in Section \ref{sec5}, along with a discussion of the
     {\tt Macaulay2} command \texttt{solvePDE},
     which computes  differential primary decompositions.
  \end{proof}

The differential operators in $\mathcal{A}_1,\ldots,\mathcal{A}_s$ are known
as {\em Noetherian operators}  in the literature;
see \cite{CHKL, CPS, DAMIANO, oberst99}.
 Theorem \ref{thm:diff_prim_dec_existence} says that we can find a collection of
 $\,\operatorname{amult}(M) = m_1 + \cdots + m_s\,$ Noetherian operators
 in $D_n^k$ to 
characterize membership in the module $M$.

\begin{remark}
The construction of Noetherian operators is studied in $\cite{BJORK, 
CCHKL, CHKL, CPS,  HORMANDER, oberst99}$. Some of these sources
offer explicit methods, while others remain at an abstract level.
All previous methods share one serious shortcoming, namely they
yield operators separately~for each primary component $M_i$ of $M$.
They do not take into account how one primary component is embedded into another.
This leads to a number of
 operators that can be much larger than
  $\operatorname{amult}(M)$. We refer to \cite[Example 5.6]{CS} for an
  instance from algebraic statistics where the previous methods 
  require $ 1044$ Noetherian operators, while
  ${\rm amult}(M) = 207$~suffice.
\end{remark}  

While Theorem \ref{thm:diff_prim_dec_existence} makes no claim of minimality,
it is known that $\operatorname{amult}(M)$ is the minimal number of Noetherian
operators required for a differential primary decomposition of a certain desirable form.
To make this precise, we begin with a few necessary definitions.
For any given subset $\mathcal{S}$ of $\{x_1,\ldots,x_n\}$,
 the \emph{relative Weyl algebra} is defined as the subring of the Weyl algebra 
 $D_n$ 
 using only differential operators corresponding to variables not in $\mathcal{S}$:
\begin{equation}
\label{eq:usingonly}
  D_n(\mathcal{S}) \,:= \, R \langle \,\partial_{x_i} \colon x_i \not \in \mathcal{S} \,\rangle
  \,\, \subseteq \,\, D_n.
\end{equation}
Thus, if $\mathcal{S} = \emptyset$ then $D_n(\mathcal{S}) = D_n$,
and if $\mathcal{S} = \{x_1,\ldots,x_n\}$ then $D_n(\mathcal{S}) = R
= K[x_1,\ldots,x_n]$.

For any prime ideal $P_i$ in $R$ we fix a 
set $\mathcal{S}_i \subseteq \{x_1,\ldots,x_n\}$
that satisfies $K[\mathcal{S}_i] \cap P_i = \{0\}$
and is maximal with this property.
Thus, $\mathcal{S}_i$ is a maximal independent set 
of coordinates on the irreducible variety $V(P_i)$.
Equivalently, $\mathcal{S}_i$ is a basis of
the algebraic matroid defined by the prime $P_i$; cf.~\cite[Example 13.2]{INLA}.
The cardinality of $\mathcal{S}_i$ equals the dimension of  $V(P_i)$.

\begin{theorem} \label{thm:DPD}
The differential primary decomposition in Theorem~\ref{thm:diff_prim_dec_existence}
can be chosen so that $\mathcal{A}_i \subset D_n(\mathcal{S}_i)^k$.
The arithmetic length of $M$ along $P_i$ is a lower bound for the cardinality of $\mathcal{A}_i$
in any differential primary decomposition of~$M$ such that $\mathcal{A}_i \subset D_n(\mathcal{S}_i)^k$ for  $i = 1,\dotsc,s$.
\end{theorem}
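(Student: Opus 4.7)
My plan is to prove the two assertions separately, with both resting on the interpretation of the arithmetic length $m_i$ as the length of the local cohomology module $N_i := H^0_{P_i}((R_{P_i})^k/M_{P_i})$, and on the fact that $\mathcal{S}_i$ being a maximal independent set means $K[\mathcal{S}_i] \hookrightarrow R/P_i$ becomes a finite extension after passing to $F_i := \operatorname{Frac}(K[\mathcal{S}_i])$.

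First, I would handle the existence of the structured decomposition. The starting point is Theorem \ref{thm:diff_prim_dec_existence}, which already yields sets $\mathcal{A}_i \subset D_n^k$ with $|\mathcal{A}_i| = m_i$. To upgrade these to sets inside $D_n(\mathcal{S}_i)^k$, I would argue one prime at a time. Reducing modulo $P_i$ and inverting $K[\mathcal{S}_i] \setminus \{0\}$ (which is allowed since $K[\mathcal{S}_i] \cap P_i = 0$), the residue ring $R_{P_i}/P_iR_{P_i}$ is a finite algebraic extension of $F_i$. Any $\partial_{x_j}$ with $x_j \in \mathcal{S}_i$ differentiates across this transcendence basis, and when composed with reduction mod $P_i$ it can be expressed as an $F_i$-linear combination of the $\partial_{x_j}$ with $x_j \notin \mathcal{S}_i$ (together with multiplication by elements of $R$). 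Clearing denominators and using the membership test (\ref{eq:modulemembership}) proves the claim that the test condition $\delta \bullet m \in P_i$ is preserved under this substitution, so the original sets can be replaced by equally-sized sets in $D_n(\mathcal{S}_i)^k$.

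Second, I would prove the lower bound. Fix any differential primary decomposition $\mathcal{A}_1,\dotsc,\mathcal{A}_s$ with $\mathcal{A}_i \subset D_n(\mathcal{S}_i)^k$, and fix one index $i$. Each $\delta \in \mathcal{A}_i$ descends, via reduction mod $P_i$ after extending scalars to $F_i$, to an $F_i$-linear functional $\bar\delta : N_i \otimes_{R_{P_i}} F_i \to F_i$. (This step uses the restriction on $\mathcal{S}_i$ crucially: operators differentiating along $\mathcal{S}_i$ would not yield well-defined $F_i$-linear functionals.) I claim that the collection $\{\bar\delta : \delta \in \mathcal{A}_i\}$ must separate points of $N_i \otimes_{R_{P_i}} F_i$. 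Indeed, suppose $[m] \ne 0$ is annihilated by every $\bar\delta$ with $\delta \in \mathcal{A}_i$. Lifting $[m]$ to some $m \in R^k \setminus M$ (possible after clearing denominators, which do not interact with $P_i$), one checks that $\delta \bullet m \in P_j$ for every $j$ and every $\delta \in \mathcal{A}_j$: for $j \ne i$, by support considerations the image of $m$ in $(R^k/M)_{P_j}$ is zero so all operators vanish; for $j = i$, this is the hypothesis. This contradicts (\ref{eq:modulemembership}). By the length computation $\dim_{F_i}(N_i \otimes F_i) = m_i$, the separating set $\mathcal{A}_i$ must have at least $m_i$ elements.

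The main obstacle will be the lifting and separation argument in the second step. Two points need care: (a) verifying that $N_i \otimes_{R_{P_i}} F_i$ really has $F_i$-dimension $m_i$ rather than something smaller, which requires that the local cohomology module is supported only at $P_i$ and hence becomes an $F_i$-vector space of dimension equal to its $R_{P_i}$-length; and (b) guaranteeing that for $j \ne i$ the lifted $m$ satisfies the membership tests at $P_j$. Both are addressed by a careful use of the primary decomposition: one can choose the lift $m$ to lie in $\bigcap_{j \ne i} M_j$ for a chosen ordinary primary decomposition, so that $m$ maps to zero in $(R^k/M_j)_{P_j}$ for $j \ne i$, which in turn annihilates every differential operator applied to it modulo $P_j$. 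Making this compatible with the contradiction at $i$ is the crux of the argument, and is where I anticipate the bulk of the technical work to lie.
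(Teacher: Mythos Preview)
The paper does not give a proof here: its ``proof'' consists entirely of a citation to \cite[Theorem~4.6]{CS} and \cite{CC}, so there is no in-paper argument to compare against. Your two-part architecture (existence, then a separation/counting argument for the lower bound) does match the strategy in those cited sources, but both halves of your sketch contain genuine gaps.

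For existence, the key substitution step is false as stated. Your claim that $\partial_{x_j}$ with $x_j \in \mathcal{S}_i$, after reduction modulo $P_i$, ``can be expressed as an $F_i$-linear combination of the $\partial_{x_j}$ with $x_j \notin \mathcal{S}_i$'' fails already for $n=2$, $P_i = \langle x_1 \rangle$, $\mathcal{S}_i = \{x_2\}$: the operator $\partial_{x_2}$ acts nontrivially on $\kappa(P_i) = K(x_2)$, whereas every element of $R\langle \partial_{x_1} \rangle$ reduces modulo $P_i$ to a multiplication operator. What is actually true, and what \cite{CS,CC} use, is that derivatives in the $\mathcal{S}_i$-directions contribute no \emph{additional} constraints to the test $\delta \bullet m \in P_i$; the argument goes through the change of coordinates $\gamma$ in \eqref{gamma} to reduce to the $\mathfrak{m}$-primary case over $\mathbb{K} = \kappa(P_i)$, as reflected in Theorem~\ref{thm:quot4}.

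For the lower bound, you are working over the wrong field. The module $N_i$ has $R_{P_i}$-length $m_i$, but $\dim_{F_i} N_i = m_i \cdot [\kappa(P_i):F_i]$, which exceeds $m_i$ whenever the extension is nontrivial (e.g.\ $P_i = \langle x_1^2 - x_2 \rangle$, $\mathcal{S}_i = \{x_2\}$). Moreover your functionals $\bar\delta$ land naturally in $\kappa(P_i)$, not in $F_i$. The count that yields $|\mathcal{A}_i| \ge m_i$ must therefore be carried out over $\mathbb{K} = \kappa(P_i)$: one shows that the images of the $\delta \in \mathcal{A}_i$ in the Weyl--Noether module $\mathbb{K} \otimes_R D_n(\mathcal{S}_i)^k$ span a $\mathbb{K}$-subspace which, by the $R$-bimodule constraint in Theorem~\ref{thm:quot4}(d), has dimension at least $m_i$. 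Your choice of lift $m \in \bigcap_{j \ne i} M_j$ for the separation step is the right idea, but the linear algebra on top of it needs to be done over $\kappa(P_i)$ rather than $F_i$.
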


\begin{proof}[Proof and discussion]
This was shown in \cite[Theorem 4.6]{CS}. The case of ideals 
$(k=1)$ appears in \cite[Theorem 3.6]{CS}. See also \cite{CC}.
The theory developed in \cite{CS} is 
more general in that $R$ can be any Noetherian $K$-algebra.
In this paper we restrict to  polynomial rings $R = K[x_1,\ldots,x_n]$ 
where $K$ is a subfield of $\CC$.
That case is treated in detail in \cite{CC}.
\end{proof}

We next argue that Theorems~\ref{thm:Palamodov_Ehrenpreis} and 
\ref{thm:diff_prim_dec_existence}
are really two sides of the same coin.
Every element $A$ in the Weyl algebra $D_n$ acts as a differential operator with polynomial
coefficients on functions in the unknowns ${\bf x} = (x_1,\ldots,x_n)$.
Such a differential operator has a unique representation where 
all derivatives are moved to the right of the polynomial coefficients:
\begin{equation}
\label{eq:Ann}
\qquad A({\bf x},\partial_{\bf x}) \,\,\, = \,\, \sum_{{\bf r},{\bf s} \in \NN^n} \!\! c_{{\bf r},{\bf s}} \,
x_1^{r_1} \cdots x_n^{r_n} \partial_{x_1}^{s_1} \cdots \partial_{x_n}^{s_n} , \qquad
{\rm where} \,\,c_{{\bf r},{\bf s}} \in K . 
\end{equation}
There is a natural $K$-linear isomorphism between the Weyl algebra $D_n$ and the
polynomial ring $K[{\bf x},{\bf z}]$ which takes the operator $A$ in (\ref{eq:Ann})
to the following polynomial $B$ in $2n $ variables:
\begin{equation}
\label{eq:Bnn}
 B({\bf x},{\bf z}) \,\,\, = \,\, \sum_{{\bf r},{\bf s} \in \NN^n} \!\! c_{{\bf r},{\bf s}} \,
x_1^{r_1} \cdots x_n^{r_n} z_1^{s_1} \cdots z_n^{s_n} . \qquad \qquad \quad
\end{equation}

In Sections \ref{sec1}, \ref{sec2} and \ref{sec3}, polynomials
 in $R =  K[x_1,\ldots,x_n] = K[\partial_1,\ldots,\partial_n] $
act as differential operators  on functions in the unknowns ${\bf z} = (z_1,\ldots,z_n)$.
For such operators, polynomials in ${\bf x}$ are constants.
By contrast, in the current section, we introduced the Weyl algebra $D_n$.
Its elements
act on functions in ${\bf x} = (x_1,\ldots,x_n)$, with polynomials in ${\bf z}$ 
being constants. These two different actions of
differential operators, by $D_n$ and $R$ on scalar-valued functions,
extend to actions by $D_n^k$ and $R^k$ on vector-valued functions. 
We highlight the following key~point:
\begin{equation}
\label{eq:absolutelyessential}
\hbox{\em Our distinction between the ${\bf z}$-variables and ${\bf x}$-variables is
absolutely essential.}
\end{equation}

The following theorem is the punchline of this section.
It allows us to identify Noetherian operators (\ref{eq:Ann}) with
Noetherian multipliers (\ref{eq:Bnn}). 
This was assumed tacitly in \cite[Section 3]{CPS}.

\begin{theorem} \label{thm:DPD_to_integrals}
Consider any differential primary decomposition of the module $M$
as in Theorem~\ref{thm:DPD}. Then this translates into 
an Ehrenpreis--Palamodov representation of 
the solution space ${\rm Sol}(M)$. Namely, if we replace
each operator $A({\bf x},\partial_{\bf x})$ in $\mathcal{A}_i$ by the 
corresponding polynomial $B({\bf x},{\bf z})$, then these
${\rm amult}(M)$
polynomials satisfy the conclusion of 
Theorem~\ref{thm:Palamodov_Ehrenpreis}.
\end{theorem}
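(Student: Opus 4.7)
The plan is to prove the forward direction---that every integral of the prescribed form lies in $\operatorname{Sol}(M)$---by a direct computation resting on a single commutation identity, and then to deduce completeness by combining Theorem~\ref{thm:Palamodov_Ehrenpreis} with the minimality half of Theorem~\ref{thm:DPD}.

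The first step is to observe that the isomorphism (\ref{eq:Ann})--(\ref{eq:Bnn}) is characterized by the identity $A(\mathbf{x},\partial_{\mathbf{x}}) \bullet \exp(\mathbf{x}^t\mathbf{z}) = B(\mathbf{x},\mathbf{z})\exp(\mathbf{x}^t\mathbf{z})$, since $\partial_{x_i}$ acts on $\exp(\mathbf{x}^t\mathbf{z})$ as multiplication by $z_i$. Because $A$ involves only $\mathbf{x},\partial_{\mathbf{x}}$ while $p(\partial_{\mathbf{z}})$ involves only $\mathbf{z},\partial_{\mathbf{z}}$, these operators commute; applying $p(\partial_{\mathbf{z}})$ to both sides and using $p(\partial_{\mathbf{z}})\bullet\exp(\mathbf{x}^t\mathbf{z}) = p(\mathbf{x})\exp(\mathbf{x}^t\mathbf{z})$ yields the exchange formula
\begin{equation*}
  p(\partial_{\mathbf{z}}) \bullet \bigl[B(\mathbf{x},\mathbf{z})\exp(\mathbf{x}^t\mathbf{z})\bigr] \;=\; A(\mathbf{x},\partial_{\mathbf{x}}) \bullet \bigl[p(\mathbf{x})\exp(\mathbf{x}^t\mathbf{z})\bigr],
\end{equation*}
extended componentwise to $p \in R^k$ and $A \in D_n^k$. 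A Leibniz expansion of the right-hand side in powers of $\mathbf{z}$ gives
\begin{equation*}
  A \bullet \bigl[p\exp(\mathbf{x}^t\mathbf{z})\bigr] \;=\; \exp(\mathbf{x}^t\mathbf{z}) \sum_{\gamma \in \mathbb{N}^n} \mathbf{z}^\gamma \bigl(A^{[\gamma]} \bullet p\bigr)(\mathbf{x}),
\end{equation*}
where $A^{[\gamma]} \in D_n^k$ corresponds, under (\ref{eq:Ann})--(\ref{eq:Bnn}), to the polynomial $\frac{1}{\gamma!}\partial_{\mathbf{z}}^{\gamma} B$; in particular $A^{[0]} = A$.

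The heart of the argument is the claim that $A^{[\gamma]} \bullet p \in P_i$ for every $\gamma$ whenever $p \in M$ and $A \in \mathcal{A}_i$. I would prove this by induction on $|\gamma|$. The base case $\gamma=0$ is precisely the defining property (\ref{eq:modulemembership}) of a differential primary decomposition. For the inductive step, the relation $[x_k,\partial_{x_k}] = -1$ in the Weyl algebra lifts to a dictionary $\partial_{z_k} \leftrightarrow -[x_k,\cdot]$ across the isomorphism, which produces $A^{[\gamma+e_k]} = -\tfrac{1}{\gamma_k+1}[x_k, A^{[\gamma]}]$ and hence
\begin{equation*}
  A^{[\gamma+e_k]} \bullet p \;=\; -\tfrac{1}{\gamma_k+1}\bigl(x_k(A^{[\gamma]} \bullet p) - A^{[\gamma]} \bullet (x_k p)\bigr).
\end{equation*}
The first summand lies in $x_k P_i \subseteq P_i$ by the induction hypothesis, and the second lies in $P_i$ by the induction hypothesis applied to $x_k p$, which belongs to $M$ because $M$ is an $R$-submodule. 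So every coefficient $(A^{[\gamma]} \bullet p)(\mathbf{x})$ vanishes on $V_i = V(P_i)$; the integrand in $\int_{V_i}B(\mathbf{x},\mathbf{z})\exp(\mathbf{x}^t\mathbf{z})\,d\mu(\mathbf{x})$ is therefore annihilated by $p(\partial_{\mathbf{z}})$ for every $p \in M$, establishing the forward direction.

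For completeness---that every solution is of this form---I would invoke Theorem~\ref{thm:Palamodov_Ehrenpreis} to obtain \emph{some} admissible set of Noetherian multipliers representing every solution, then argue by dimension count: the invariant $m_i$ in Ehrenpreis--Palamodov coincides with the arithmetic length, so both the EP multipliers and the DPD-derived ones occur in blocks of size $m_i$ per associated variety $V_i$. By the minimality in Theorem~\ref{thm:DPD} the $B$'s produced from $\mathcal{A}_i$ are linearly independent, hence span the same $m_i$-dimensional space of effective multipliers at $V_i$ (i.e.\ those modulo multipliers whose integral vanishes for every measure on $V_i$), so every EP representation can be rewritten in terms of our $B$'s. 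The main obstacles I anticipate are (i) keeping the signs and binomial factors in $\partial_{z_k}\leftrightarrow -[x_k,\cdot]$ straight when passing between the ideal and module settings, and (ii) identifying the space of effective Noetherian multipliers at $V_i$ with the local dual of $M$ at $P_i$ of length $m_i$ cleanly enough to justify the final dimension count.
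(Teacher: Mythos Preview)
Your forward direction is correct and takes a route genuinely different from the paper's. The paper establishes that $B(\mathbf{x},\mathbf{z})\exp(\mathbf{x}^t\mathbf{z})\in\operatorname{Sol}(M)$ for $\mathbf{x}\in V(P_i)$ via a density argument: it proves a bijection between the set $\mathfrak{A}$ of all Noetherian operators and the set $\mathfrak{B}$ of all Noetherian multipliers (Proposition~\ref{prop:NONM}), using that polynomials and exponential functions are dense among entire functions. Your argument is purely algebraic: the Leibniz expansion together with the commutator recursion $A^{[\gamma+e_k]}=-\tfrac{1}{\gamma_k+1}[x_k,A^{[\gamma]}]$ shows directly that every $A^{[\gamma]}$ sends $M$ into $P_i$. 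This is in effect a hands-on proof that the Noetherian operators span a closed $R$-bimodule, the very property the paper later cites from \cite[Theorem~3.2(d)]{CC}. Your computation buys self-containment and avoids any appeal to analysis in this half of the proof; the paper's approach buys the two-sided correspondence $\mathfrak{A}\leftrightarrow\mathfrak{B}$, which is of independent interest.

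Your completeness argument, by contrast, does not close. You invoke Theorem~\ref{thm:Palamodov_Ehrenpreis} to obtain \emph{some} multipliers and then propose a dimension count, but the step ``hence span the same $m_i$-dimensional space of effective multipliers at $V_i$'' is not justified: you have not defined this space precisely, nor shown it has dimension $m_i$, nor shown that the Ehrenpreis--Palamodov multipliers and your $B$'s both lie in it. The minimality clause in Theorem~\ref{thm:DPD} concerns the cardinality of $\mathcal{A}_i$ in a differential primary decomposition; it does not immediately translate into linear independence in a space of multipliers modulo ``ineffective'' ones. The difficulty is sharpest for embedded primes: there $\mathcal{A}_i$ does \emph{not} characterize the $P_i$-primary component on its own but only the excess over the components for primes strictly inside $P_i$, so a naive dimension match with the multipliers attached to $V_i$ in an arbitrary Ehrenpreis--Palamodov representation fails. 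The paper handles completeness differently: for minimal primes it notes that $\mathcal{A}_i$ spans the relevant bimodule and hence determines the $P_i$-primary component $M_i$, then invokes the classical analytic constructions in \cite{BJORK,HORMANDER,PALAMODOV} for each $M_i$; for embedded primes it first enlarges $\mathcal{A}_i$ to bimodule generators, applies the same patching, and afterwards argues that the added summands are redundant because they are absorbed by the integrals over the larger varieties $V(P_j)\supseteq V(P_i)$. You correctly flag obstacle (ii), but it is not a bookkeeping issue; it is the substantive content of the completeness half.
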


\begin{example}[$k=l=n=1$] \label{ex:warning}
We illustrate Theorem \ref{thm:DPD_to_integrals}
and the warning  (\ref{eq:absolutelyessential}) 
for an ODE (\ref{eq:ODE}) with $m=3$.
Set $\, p(x)\, =\,  x^3 + 3 x^2 - 9x + 5 \, = \,(x-1)^2 (x+5)\,$
in (\ref{eq:charpoly}).
The ideal $I = \langle\, p\, \rangle$  has
 $s=2$ associated primes in $R = \QQ[x] $, namely
$P_1 = \langle x-1 \rangle $ and $P_2 = \langle x+5 \rangle$,
with $m_1 = 2$ and $m_2=1$, so ${\rm amult}(I) = 3$.
A differential primary decomposition of $I$ is given by
$\mathcal{A}_1 = \{1,\partial_x \}$ and $\mathcal{A}_2 = \{1\}$.
The three Noetherian operators translate into the Noetherian multipliers
$B_{11} = 1, B_{12} = z, B_{21} = 1$. The integrals in (\ref{eq:anysolution})
now furnish the general solution
$ \phi(z) = \alpha \,{\rm exp}(z) + \beta z \,{\rm exp}(z) + \gamma\, {\rm exp}(-5z) $
to the differential equation $\phi''' + 3 \phi'' - 9 \phi' + 5 \phi = 0$.
\end{example}

The derivation of  Theorem \ref{thm:DPD_to_integrals} 
rests on the following lemma on duality between ${\bf x}$ and ${\bf z}$.

\begin{lemma}\label{lem:poly_exp_duality}
Let $p$ and $q$ be polynomials in $n$ unknowns with coefficients in $K$. We have
  \begin{equation}
  \label{eq:pqcommute}
    q(\partial_{\bf z}) \bullet \bigl(p({\bf z}) \exp({\bf x}^t {\bf z}) \bigr)
    \,\, =\,\, p(\partial_{\bf x}) \bullet \bigl( q({\bf x}) \exp({\bf x}^t {\bf z}) \bigr).    
\end{equation}
\end{lemma}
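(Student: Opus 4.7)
The plan is to use the self-duality of the exponential kernel $E({\bf x},{\bf z}) := \exp({\bf x}^t {\bf z})$ under the Fourier-like correspondence $\partial_{x_i} \leftrightarrow z_i$ and $\partial_{z_i} \leftrightarrow x_i$. The key observation is that a direct calculation gives $\partial_{x_i} \bullet E = z_i \,E$ and $\partial_{z_i} \bullet E = x_i \,E$, and by iterating this and using linearity we get the two identities
\begin{equation*}
  p(\partial_{\bf x}) \bullet E({\bf x},{\bf z}) \,=\, p({\bf z})\, E({\bf x},{\bf z})
  \qquad \text{and} \qquad
  q(\partial_{\bf z}) \bullet E({\bf x},{\bf z}) \,=\, q({\bf x})\, E({\bf x},{\bf z}).
\end{equation*}

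First I would verify these two identities carefully by induction on the total degree of $p$ (respectively $q$), using that $p$ and $q$ have constant coefficients in $K$ so that no awkward commutations are required. The step $\partial_{x_i} \bullet E = z_i E$ follows immediately from the exponential rule applied coordinatewise.

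Next, I would exploit the fact that ${\bf x}$ and ${\bf z}$ are disjoint sets of variables, so the constant-coefficient differential operators $p(\partial_{\bf x})$ and $q(\partial_{\bf z})$ act on smooth functions of $({\bf x},{\bf z})$ and commute with each other (by equality of mixed partials). Applying the two identities above and this commutativity yields the one-line chain
\begin{equation*}
  q(\partial_{\bf z}) \bullet \bigl(p({\bf z})\, E\bigr)
  \,=\, q(\partial_{\bf z})\, p(\partial_{\bf x}) \bullet E
  \,=\, p(\partial_{\bf x})\, q(\partial_{\bf z}) \bullet E
  \,=\, p(\partial_{\bf x}) \bullet \bigl(q({\bf x})\, E\bigr),
\end{equation*}
which is exactly \eqref{eq:pqcommute}.

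There is no real obstacle: the only subtle point is conceptual rather than technical, namely keeping track of the warning \eqref{eq:absolutelyessential} and not confusing the two roles of the unknowns. In particular one must remember that inside $q(\partial_{\bf z})$ the coefficients of $q$ are constants (elements of $K$), not polynomials in ${\bf x}$, and likewise for $p(\partial_{\bf x})$; otherwise the commutation step would fail. A purely computational alternative would be to reduce to monomials $p = {\bf z}^{\bf a}$, $q = {\bf x}^{\bf b}$, expand both sides by the multivariate Leibniz rule, and check that the coefficient $\binom{{\bf b}}{{\bf c}} {\bf a}!/({\bf a}-{\bf c})!$ appearing on the LHS equals $\binom{{\bf a}}{{\bf c}} {\bf b}!/({\bf b}-{\bf c})!$ on the RHS; but the conceptual argument above avoids this combinatorial bookkeeping entirely.
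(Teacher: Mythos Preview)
Your proposal is correct and follows essentially the same approach as the paper: both arguments rewrite $p({\bf z})\,E$ as $p(\partial_{\bf x})\bullet E$ and $q({\bf x})\,E$ as $q(\partial_{\bf z})\bullet E$, and then use that the constant-coefficient operators $p(\partial_{\bf x})$ and $q(\partial_{\bf z})$ commute on functions of the $2n$ variables $({\bf x},{\bf z})$. Your write-up is simply a more detailed version of the paper's one-sentence proof.
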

\begin{proof}
The parenthesized expression on the left
equals $p(\partial_{\bf x}) \bullet {\rm exp}({\bf x}^t {\bf z})$, while that on the right
equals $q(\partial_{\bf z}) \bullet {\rm exp}({\bf x}^t {\bf z})$.
Therefore the expression in (\ref{eq:pqcommute}) is the
result of applying the operator
$p(\partial_{\bf x}) q(\partial_{\bf z}) = q(\partial_{\bf z}) p(\partial_{\bf x}) $ 
to $ \exp({\bf x}^t {\bf z}) $, when viewed as a function in $2n$ unknowns.
\end{proof}

We now generalize this lemma to $k \geq 2$, we replace
$p$ by a polynomial vector that depends on both ${\bf x}$ and ${\bf z}$,
and we rename that vector using the identification between
(\ref{eq:Ann}) and~(\ref{eq:Bnn}).

\begin{proposition}\label{prop:noeth_ops_module_duality}
  Let $B({\bf x},{\bf z})$   be a $k$-tuple of polynomials in $2n$ variables 
  and $A({\bf x},\partial_{\bf x}) \in D_n^k$ the corresponding
  $k$-tuple of differential operators in the Weyl algebra.
    Then we have
  \begin{align}
  q(\partial_\mathbf{z}) \bullet (B(\mathbf{x}, \mathbf{z}) \exp(\mathbf{x}^t \mathbf{z}))
\,\, = \,\,
    A(\mathbf{x}, \partial_\mathbf{x}) \bullet (q(\mathbf{x}) \exp(\mathbf{x}^t \mathbf{z})) .  \end{align}
\end{proposition}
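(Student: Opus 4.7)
The plan is to reduce the proposition to a coefficient-wise computation on monomials, exploiting the fact that $\mathbf{x}$ and $\mathbf{z}$ live in disjoint sets of variables so that $\partial_{\mathbf{x}}$ and $\partial_{\mathbf{z}}$ commute freely. First I would observe that both sides of the asserted identity act coordinate-wise on the $k$ components of $B$ (and on the $k$ components of $A$ under the isomorphism (\ref{eq:Ann})$\leftrightarrow$(\ref{eq:Bnn})), so there is no loss in taking $k=1$. By $K$-linearity of both sides in $B$, and hence in $A$, it then suffices to verify the identity when $B(\mathbf{x},\mathbf{z}) = \mathbf{x}^{\mathbf{r}} \mathbf{z}^{\mathbf{s}}$ is a single monomial; the corresponding operator under (\ref{eq:Ann})--(\ref{eq:Bnn}) is $A(\mathbf{x},\partial_{\mathbf{x}}) = \mathbf{x}^{\mathbf{r}} \partial_{\mathbf{x}}^{\mathbf{s}}$ (with coefficients on the left, as required by the normal form).

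The key ingredients are two elementary identities for the exponential kernel: $\mathbf{z}^{\mathbf{s}} \exp(\mathbf{x}^t\mathbf{z}) = \partial_{\mathbf{x}}^{\mathbf{s}} \bullet \exp(\mathbf{x}^t\mathbf{z})$ and $q(\partial_{\mathbf{z}}) \bullet \exp(\mathbf{x}^t \mathbf{z}) = q(\mathbf{x}) \exp(\mathbf{x}^t\mathbf{z})$. With these in hand, I would compute the left-hand side directly. Since $\mathbf{x}^{\mathbf{r}}$ is a constant from the viewpoint of $\partial_{\mathbf{z}}$, it factors out; using the first identity to rewrite $\mathbf{z}^{\mathbf{s}} \exp(\mathbf{x}^t \mathbf{z})$ as $\partial_{\mathbf{x}}^{\mathbf{s}} \bullet \exp(\mathbf{x}^t \mathbf{z})$, and then commuting $q(\partial_{\mathbf{z}})$ past $\partial_{\mathbf{x}}^{\mathbf{s}}$, yields
\begin{equation*}
q(\partial_{\mathbf{z}}) \bullet (\mathbf{x}^{\mathbf{r}} \mathbf{z}^{\mathbf{s}} \exp(\mathbf{x}^t\mathbf{z})) \;=\; \mathbf{x}^{\mathbf{r}} \partial_{\mathbf{x}}^{\mathbf{s}} q(\partial_{\mathbf{z}}) \bullet \exp(\mathbf{x}^t\mathbf{z}) \;=\; \mathbf{x}^{\mathbf{r}} \partial_{\mathbf{x}}^{\mathbf{s}} \bullet \bigl(q(\mathbf{x}) \exp(\mathbf{x}^t\mathbf{z})\bigr),
\end{equation*}
which is precisely the right-hand side $A(\mathbf{x},\partial_{\mathbf{x}}) \bullet (q(\mathbf{x}) \exp(\mathbf{x}^t \mathbf{z}))$ for this monomial. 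Summing over monomials via $K$-linearity finishes the scalar case, and then the coordinate-wise extension gives the general $k$-tuple case.

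The only real subtlety, and the step I would emphasize carefully, is the bookkeeping of the normal form (\ref{eq:Ann}): the correspondence $A \leftrightarrow B$ is defined by placing all polynomial coefficients to the \emph{left} of all derivatives. In the monomial computation the factor $\mathbf{x}^{\mathbf{r}}$ must stay on the left throughout, which is automatic here because $\mathbf{x}^{\mathbf{r}}$ commutes with both $\partial_{\mathbf{z}}$ (different variables) and with $\partial_{\mathbf{x}}^{\mathbf{s}}$ as operators acting on the fixed function $\exp(\mathbf{x}^t \mathbf{z})$ only \emph{after} the derivative has been applied. Making this precise amounts to first applying $\partial_{\mathbf{x}}^{\mathbf{s}}$ (which produces $\mathbf{z}^{\mathbf{s}} \exp(\mathbf{x}^t \mathbf{z})$) and only then multiplying by $\mathbf{x}^{\mathbf{r}}$; the warning (\ref{eq:absolutelyessential}) that ${\bf x}$ and ${\bf z}$ play fundamentally different roles is exactly what keeps this from collapsing. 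Once the monomial identity is verified, linearity completes the proof, and Lemma \ref{lem:poly_exp_duality} is recovered as the special case where $B$ depends only on $\mathbf{z}$ and the operator $A$ has no polynomial-in-$\mathbf{x}$ coefficients.
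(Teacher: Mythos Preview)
Your proof is correct and follows essentially the same approach as the paper: reduce to $k=1$ by coordinate-wise linearity, expand $B$ (equivalently $A$) term by term, and use the two basic exponential identities to swap the roles of $\mathbf{x}$ and $\mathbf{z}$. The only cosmetic difference is that the paper packages the core swap as Lemma~\ref{lem:poly_exp_duality} and then applies it to each term $c_\alpha(\mathbf{x})\,\mathbf{z}^\alpha$, whereas you reduce one step further to single monomials $\mathbf{x}^{\mathbf{r}}\mathbf{z}^{\mathbf{s}}$ and inline the lemma's content directly; as you note yourself, the lemma is recovered as the special case $\mathbf{r}=0$.
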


\begin{proof}
  If $k=1$, we write $A(\mathbf{x}, \partial_\mathbf{x}) 
  = \sum_{\alpha} c_\alpha(\mathbf{x}) \partial_\mathbf{x}^\alpha$
  as in (\ref{eq:Ann}) and
 $B({\bf x},{\bf z}) = \sum_{\alpha} c_\alpha(\mathbf{x}) \mathbf{z}^\alpha$
 as in (\ref{eq:Bnn}).   Only finitely many
of  the polynomials $c_\alpha(\mathbf{x})$ are nonzero.
  Applying Lemma \ref{lem:poly_exp_duality} gives
  $$ \begin{matrix}
    A(\mathbf{x}, \partial_\mathbf{x}) \bullet (q(\mathbf{x}) \exp(\mathbf{x}^t \mathbf{z}))
    \,\, = \,\,\sum_\alpha c_\alpha(\mathbf{x}) q(\partial_\mathbf{z}) \bullet ({\bf z}^\alpha \exp(\mathbf{x}^t \mathbf{z})) \,\,=
    \,\, q(\partial_\mathbf{z}) \bullet (B(\mathbf{x}, \mathbf{z}) \exp(\mathbf{x}^t \mathbf{z})).
    \end{matrix}
$$
  The extension from $k=1$ to $k \geq 2$ follows because
  the differential operation $\bullet$ is $K$-linear.
  \end{proof}

We now take a step towards proving Theorem \ref{thm:DPD_to_integrals}
in the case $s=1$. Let $M$ 
be a primary submodule of $R^k$ with $\operatorname{Ass}(M) = \{P\}$.
Its support $V(M) = V(P)$ is an  irreducible affine variety in $\CC^n$.
Consider  the sets of all Noetherian operators and all Noetherian multipliers:
\begin{equation}
\label{eq:frakAB}
  \begin{matrix}
\mathfrak{A} & := & \,\,\, \bigl\{\, A \in D_n^k\,:\,
 A \bullet m \in P \,\,\hbox{for all} \,\, m \in M \bigr\} \qquad \qquad  
 \qquad \qquad \qquad {\rm and} \\
  \mathfrak{B} & := &\!\!\!\! \!\!\bigl\{\,B \in K[{\bf x},{\bf z}]\,:\,
 B({\bf x},{\bf z}) \,{\rm exp}({\bf x}^t {\bf z}) \in {\rm Sol}(M)\,\,\,
\hbox{for all $\,{\bf x} \in V(P) $} \bigr\}. 
\end{matrix}
\end{equation}

\begin{proposition} \label{prop:NONM}
The bijection between $D_n^k$ and $K[{\bf x},{\bf z}]^k$, given by
identifying the operator $A$ in (\ref{eq:Ann}) with the polynomial
$B$ in (\ref{eq:Bnn}), restricts to a bijection
between the sets $\mathfrak{A}$ and $\mathfrak{B}$.
\end{proposition}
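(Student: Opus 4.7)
The plan is to translate both conditions $A \in \mathfrak{A}$ and $B \in \mathfrak{B}$ into statements about a single bivariate polynomial $C(\mathbf{x},\mathbf{z})$, and then read the equivalence off its coefficients. By Proposition~\ref{prop:noeth_ops_module_duality} applied with $q = m \in M \subseteq R^k$, we have
$$m(\partial_{\mathbf{z}}) \bullet (B(\mathbf{x},\mathbf{z})\exp(\mathbf{x}^t \mathbf{z})) \,=\, A(\mathbf{x},\partial_{\mathbf{x}}) \bullet (m(\mathbf{x})\exp(\mathbf{x}^t \mathbf{z})) \,=\, C(\mathbf{x},\mathbf{z})\exp(\mathbf{x}^t \mathbf{z})$$
for a unique polynomial $C \in R[\mathbf{z}]$ depending on $A$ and $m$. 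Since $\exp(\mathbf{x}_0^t \mathbf{z})$ is nowhere vanishing, membership $B \in \mathfrak{B}$ is equivalent to $C(\mathbf{x}_0,\mathbf{z}) \equiv 0$ as a polynomial in $\mathbf{z}$ for every $\mathbf{x}_0 \in V(P)$ and every $m \in M$.

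I would next compute $C$ via the Leibniz rule. Writing $A = \sum_{\alpha} c_{\alpha}(\mathbf{x})\partial_{\mathbf{x}}^{\alpha}$ componentwise and expanding $\partial_{\mathbf{x}}^{\alpha}(m(\mathbf{x})\exp(\mathbf{x}^t \mathbf{z}))$, the coefficient of $\mathbf{z}^{\gamma}$ in $C$ works out to $(A^{(\gamma)} \bullet m)(\mathbf{x})$, where
$$A^{(\gamma)} \,:=\, \frac{1}{\gamma!}\,\delta^{\gamma}(A), \qquad \delta_i(A) \,:=\, [A, x_i],$$
and $\delta_i$ is the commutator derivation on the Weyl algebra. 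In particular $A^{(0)} = A$, so the constant term of $C(\mathbf{x},\mathbf{z})$ in $\mathbf{z}$ is exactly $A \bullet m$.

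This immediately yields the direction $B \in \mathfrak{B} \Rightarrow A \in \mathfrak{A}$: the vanishing of $C(\mathbf{x}_0,\mathbf{z})$ at $\mathbf{z}=0$ says $(A \bullet m)(\mathbf{x}_0) = 0$ for every $\mathbf{x}_0 \in V(P)$, and Hilbert's Nullstellensatz together with primality of $P$ forces $A \bullet m \in P$. For the reverse direction the key observation is the identity
$$\delta_i(A) \bullet m \,=\, A \bullet (x_i m) \,-\, x_i\,(A \bullet m),$$
which is an immediate consequence of $[A, x_i] \bullet m = A \bullet (x_i m) - x_i(A \bullet m)$. Because $M$ is an $R$-submodule of $R^k$ and $P$ is an ideal, the hypothesis $A \bullet M \subseteq P$ transfers at once to $\delta_i(A) \bullet M \subseteq P$; inductively $\delta^{\gamma}(A) \bullet M \subseteq P$ for every multi-index $\gamma$. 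Hence every coefficient of $C$ in $\mathbf{z}$ lies in $P$, so $C(\mathbf{x}_0,\mathbf{z}) \equiv 0$ for all $\mathbf{x}_0 \in V(P)$, giving $B \in \mathfrak{B}$.

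The main work is the Leibniz bookkeeping that identifies $A^{(\gamma)}$ with the iterated commutators $\frac{1}{\gamma!}[\cdots[A,x_{i_1}],\ldots,x_{i_\gamma}]$; once that identification is carried out, the stability $A \bullet M \subseteq P \Rightarrow \delta_i(A) \bullet M \subseteq P$ is a one-line consequence of the submodule axiom. The passage from $k=1$ to $k \geq 2$ is then formal, because both the Weyl-algebra action on $R^k$ and the commutator computation split coordinatewise.
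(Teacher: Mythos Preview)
Your argument is correct and takes a genuinely different route from the paper's. The paper proves both directions by an analytic density argument: starting from $A \in \mathfrak{A}$, one knows that $A \bullet (f\,m_j)$ vanishes on $V(P)$ for every polynomial $f$ and every generator $m_j$ of $M$; since polynomials are dense among entire functions, the same holds with $f(\mathbf{x}) = \exp(\mathbf{x}^t\mathbf{z})$, and then Proposition~\ref{prop:noeth_ops_module_duality} converts this into $B \in \mathfrak{B}$. The converse reverses the steps using density of exponentials. Your approach, by contrast, is purely algebraic: you expand $A \bullet (m\,\exp(\mathbf{x}^t\mathbf{z}))$ via Leibniz, identify the $\mathbf{z}^\gamma$-coefficient as $(A^{(\gamma)}\bullet m)(\mathbf{x})$ with $A^{(\gamma)} = \tfrac{1}{\gamma!}\,\delta^\gamma(A)$, and then use the one-line stability $\delta_i(A)\bullet m = A\bullet(x_i m) - x_i(A\bullet m)$ together with $x_iM \subseteq M$ and $x_iP \subseteq P$ to propagate $A\bullet M \subseteq P$ to all iterated commutators. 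This avoids any topology on function spaces and makes the module-theoretic mechanism (closure of $M$ under $R$-multiplication) completely explicit; the paper's argument is shorter to state but hides the algebra behind the density step. Both proofs share the same implicit final step, namely that a polynomial in $K[\mathbf{x}]$ vanishing on $V(P)\subset\CC^n$ lies in $P$, which holds because $K$ has characteristic zero.
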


\begin{proof}
Let $m_1,\ldots,m_l \in K[{\bf x}]^k$ be  generators of $M$.
Suppose $A \in \mathfrak{A}$. Then
  \begin{align*}
    \sum_{i = 1}^k A_i(\mathbf{x}, \partial_\mathbf{x}) \bullet \sum_{j = 1}^l m_{ij}(\mathbf{x}) f_j(\mathbf{x})
  \end{align*}
  vanishes for all $\mathbf{x} \in V(P)$ and all polynomials $f_1,\dotsc,f_l \in \CC[\mathbf{x}]$.
Since the space of complex-valued polynomials is dense in the space of all entire functions
on $\CC^n$,
the preceding implies
$$
    \sum_{i = 1}^k A_i(\mathbf{x}, \partial_\mathbf{x}) \bullet 
    m_{ij}(\mathbf{x}) \exp(\mathbf{x}^t\mathbf{z}) \,\,= \,\, 0 \quad
    \hbox{for all $\mathbf{z} \in \CC^n$, $\mathbf{x} \in V(P)$ and $j = 1,\dotsc,l$.}
$$
  Using Proposition \ref{prop:noeth_ops_module_duality}, this yields
  $$
   \sum_{i = 1}^k  m_{ij}(\partial_\mathbf{z}) \bullet B_i(\mathbf{x}, \mathbf{z}) \exp(\mathbf{x}^t\mathbf{z}) 
\,\,= \,\, 0 \quad
    \hbox{for all $\mathbf{z} \in \CC^n$, $\mathbf{x} \in V(P)$ and $j = 1,\dotsc,l$.}
 $$
 We conclude that  the polynomial
  vector $B(\mathbf{x},\mathbf{z}) $ corresponding to $A({\bf x},\partial_{\bf x})$ 
  lies in $\mathfrak{B}$.

To prove the converse, we note that the implications above are reversible.
Thus, if $B({\bf x},{\bf z}) $ is in $\mathfrak{B}$ then
$A({\bf x},\partial_{\bf x})$ is in $\mathfrak{A}$.
This uses the fact that  linear combinations of~the  exponential functions 
$\mathbf{x} \to \exp(\mathbf{x}^t \mathbf{z})$,
for ${\bf z} \in \CC^n$, are also dense in the space of entire functions.
\end{proof}

\begin{proof}[Proof of Theorem \ref{thm:DPD_to_integrals}]
Let $\mathcal{A}$ be any finite subset of $\mathfrak{A}$ 
which gives a differential primary decomposition of the $P$-primary module
$M$. This exists and can be chosen to have cardinality equal to the length of $M$
along $P$.  Let $\mathcal{B}$ be the set of Noetherian multipliers (\ref{eq:Bnn}) 
corresponding to the set $\mathcal{A}$ of Noetherian operators (\ref{eq:Ann}).
Proposition \ref{prop:NONM} shows that the exponential function
$\,{\bf z} \to B({\bf x},{\bf z}) \,{\rm exp}({\bf x}^t {\bf z})$ is in 
 ${\rm Sol}(M)$ whenever ${\bf x} \in V(P)$ and  $B \in \mathcal{B}$.
 Hence all $\CC$-linear combinations of such functions are in ${\rm Sol}(M)$.
 More generally, by differentiating under the integral sign, we find that
 all functions of the following form are solutions of $M$:
$$ \psi({\bf z}) \,\, = \,\,
\sum_{B \in \mathcal{B}} \int_{V(P)} B({\bf x},{\bf z})\, {\rm exp}({\bf x}^t {\bf z}) \, d\mu_B({\bf x}).
$$

We need to argue  that all solutions in $\mathcal{F} = C^\infty(\Omega)$
admit such an integral representation.
Suppose first that all associated primes of $M$ are minimal.
Then each $\mathcal{A}_i$ spans a bimodule in the sense of
\cite[Theorem 3.2 (d)]{CC}. Hence,
 for each associated prime $P_i$, the~module 
\begin{align*}
  M_i \,\,=\,\, \{ m \in R^k \colon \delta \bullet m \in P_i \text{ for all } \delta \in \mathcal{A}_i \}
\end{align*}
is $P_i$-primary,  and
  $M = M_1 \cap \dotsb \cap M_s$ is a minimal primary decomposition.
The operators in $\mathcal{A}_i$ are in the relative Weyl algebra $D_n(\mathcal{S}_i)$ and fully characterize the $P_i$-primary component of $M$.
We may thus follow the classical analytical constructions in the books \cite{BJORK, HORMANDER, PALAMODOV} to patch together the integral representation
of ${\rm Sol}(M_i)$ for $i=1,\ldots,s$,
under the correspondence of Noetherian operators and Noetherian multipliers.
Therefore, all solutions have the form~\eqref{eq:anysolution}.

Things are more delicate when $M$ has embedded primes.
Namely,  if $P_i$ is embedded then the operators in $\mathcal{A}_i$ only characterize the contribution of the $P_i$-primary component relative to all other components contained in $P_i$. 
We see this in Section \ref{sec5}.
One argues by enlarging $\mathcal{A}_i$ to vector space generators of the relevant bimodule.
Then the previous patching argument applies. And, afterwards one shows
that the added summand in the integral representation are redundant because they
are covered by associated varieties $V(P_j)$ containing $V(P_i)$.
\end{proof}

\section{Software and Algorithm}
\label{sec5}

In this section we  present an algorithm for solving linear PDE with constant coefficients.
It is based on the methods for ideals given in
\cite{CCHKL, CHKL, CPS}. The case of modules appears in~\cite{CC}.
We note that the computation of Noetherian operators
has a long history, going back to work in the 1990's by Ulrich Oberst 
\cite{oberst90,oberst95, oberst96, oberst99}, who developed a construction of Noetherian
operators for primary modules.
This was further developed by
Damiano, Sabadini and Struppa \cite{DAMIANO}
who presented the first Gr\"obner-based algorithm.
It works for primary ideals under the restrictive assumption that the characteristic variety has a rational point after passing to a (algebraically non-closed) field of fractions.  Their article also points to an implementation in {\tt CoCoA}, 
but we were unable to access that~code.
Since these early approaches rely on the ideals or modules being primary, using them in practice requires first computing a primary decomposition.
If there are embedded primes, the number of Noetherian operators output by these methods will not be minimal either.

We here present a new algorithm that is universally applicable, to all
ideals and modules over a polynomial ring.
There are no restrictions on the input and the output is minimal.
The input is a submodule
$M$ of $R^k$, where $R = K[x_1,\ldots,x_n]$.
The output is a differential primary decomposition of size ${\rm amult}(M)$
as in Theorem \ref{thm:DPD}. A first step~is to find
${\rm Ass}(M) = \{P_1,\ldots,P_s\}$. For each
associated prime $P_i$,  the elements $A({\bf x},\partial_{\bf x})$
in the finite set $\mathcal{A}_i \subset D_n(\mathcal{S}_i)$ are 
rewritten as polynomials $B({\bf x},{\bf z})$, using 
the identification of (\ref{eq:Ann}) with
(\ref{eq:Bnn}). Only the ${\rm codim}(P_i)$ many variables $z_i $
with $x_i \not\in \mathcal{S}_i$
appear in these Noetherian multipliers~$B$.

We now describe our implementation for (\ref{eq:task2})
in {\tt Macaulay2} \cite{MAC2}.
The command is called {\tt solvePDE}, as in \cite[Section 5]{CS}.
It is distributed
with \texttt{Macaulay2} starting from version 1.18 in
the package \texttt{NoetherianOperators} \cite{CCHKL}.
The user begins by fixing a polynomial ring $R = K[x_1,\ldots,x_n]$.
Here $K$ is usually the rational numbers~${\tt QQ}$. Fairly
arbitrary variable names $x_i$ are allowed. The argument of {\tt solvePDE}
is an ideal in $R$ or a submodule of $R^k$.
The output is a list of pairs $\bigl\{P_i,\{B_{i1},\ldots,B_{i,m_i}\} \bigr\}$
for $i=1,\ldots,s$, where $P_i$ is a prime ideal given by
generators in $R$, and each $B_{ij}$ is a vector over
a newly created polynomial  ring $K[x_1,\ldots,x_n,z_1,\ldots,z_n]$.
The new variables $z_i$ are named internally by
{\tt Macaulay2}. The system writes ${\tt d} x_i$ for $z_i$. 
To be precise, each new variable is created from an 
old variable by prepending the character ${\tt d}$.
This notation can be confusing at first, but one gets used to it.
The logic comes from the
differential primary decompositions described in \cite[Section~5]{CS}.

Each $B_{ij}$ in the output of {\tt solvePDE} encodes
an exponential solution  $\,B_{ij}({\bf x},{\bf z}) \,{\rm exp}({\bf x}^t {\bf z})\,$
to $M$. Here ${\bf x}$ are the old variables chosen 
by the user, and ${\bf x}$ denotes points in the irreducible
variety $V(P_i) \subseteq \CC^n$. The solution is a function in the new unknowns
${\bf z} = ({\tt d}x_1,\ldots,{\tt d}x_n)$.
For instance, if $n=3$ and the input is in 
the ring ${\tt QQ[u,v,w]}$ then the output
lives in the ring ${\tt QQ[u,v,w,du,dv,dw]}$.
Each solution to the PDE is a function
$\psi({\tt du},{\tt dv},{\tt dw})$ and these
functions are parametrized by a variety $V(P_i)$ in a $3$-space 
whose coordinates are $({\tt u},{\tt v},{\tt w})$.

We now demonstrate how this works for two examples
featured in the introduction.

\begin{example}  \label{ex:51}
Consider the third order ODE (\ref{eq:ODE}) in Example  \ref{ex:warning}.
We solve this as follows:
\begin{verbatim}
 R = QQ[x]; I = ideal( x^3 + 3*x^2 - 9*x + 5 ); solvePDE(I)
  {{ideal(x - 1), {| 1 |, | dx |}}, {ideal(x + 5), {| 1 |}}}
 \end{verbatim}
 \vspace{-0.2in}
 The first line is the input.
 The second line is the output created by {\tt solvePDE}.
  This list of $s=2$ pairs encodes the general solution $\phi(z)$.
  Remember: $z$ is the newly created symbol {\tt dx}.
\end{example}


\begin{example} \label{ex:53}
We solve the PDE (\ref{eq:pdesys1}) by typing the $ 2 \times 3$ matrix whose
columns are (\ref{eq:threevectors}):
\begin{verbatim}
 R = QQ[x1,x2,x3,x4];
 M = image matrix {{x1^2,x2*x3,x1^2*x3},{x1*x2,x3^2,x1*x2*x4}}; solvePDE(M)
 \end{verbatim}
 \vspace{-0.3in}
 The reader is encouraged to run this code, and to check that the output
       is  the solution  (\ref{eq:pdesys2}).
\end{example}

The method in {\tt solvePDE} is described in
Algorithm~\ref{alg:solvePDE} below.
A key ingredient is a translation map. We now
explain this in the simplest case, when the module is supported in one point.
Suppose
$V(M) =  \{\mathbf{u}\}$ for some $\mathbf{u} \in K^n$. We set
 $ \mathfrak{m}_\mathbf{u} = \langle x_1 - u_1, \dotsc, x_n - u_n \rangle$ and
 \begin{equation} \label{eq:gamma_u}
	\gamma_\mathbf{u} : R \to R \,,\,\,\,
	x_i \mapsto x_i + u_i \quad\text{ for }\,\, i=1,\dotsc,n.
	\end{equation}
	The following two results are straightforward.
	We will later use them when $M$ is any primary module,
	 ${\bf u}$ is the generic point of $V(M)$,
	and $\KK = K({\bf u})$ is  the associated field extension of~$K$.

\begin{proposition}\label{prop:translation}
A constant coefficient operator
$A(\partial_\mathbf{x})$ is a Noetherian operator for the $\mathfrak{m}_\mathbf{u}$-primary module $M$ if and only if $A(\partial_\mathbf{x})$ is a Noetherian operator for the $\mathfrak{m}_0$-primary  
	 module $\hat M \coloneqq \gamma_\mathbf{u}(M)$.
	 Dually, $B(\mathbf{z}) \exp(\mathbf{u}^t\mathbf{z}) \,$
is in $\, \operatorname{Sol}(M)$ if and only if
$B(\mathbf{z})$
is in $\, \operatorname{Sol}(\hat M)$.
\end{proposition}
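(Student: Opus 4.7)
The plan is to deduce both claims from a single observation: constant-coefficient operators $\partial_{x_i}$ are translation-invariant, and the exponential factor $\exp(\mathbf{u}^t\mathbf{z})$ is the Fourier-type object that converts this invariance on the $\mathbf{x}$-side into an additive shift on the $\mathbf{z}$-side. Throughout, I would regard $\gamma_\mathbf{u}$ as the $K$-algebra automorphism of $R$, extended componentwise to $R^k$, and note the elementary facts $\gamma_\mathbf{u}(\mathfrak{m}_\mathbf{u}) = \mathfrak{m}_0$ and $\hat M = \gamma_\mathbf{u}(M)$ is $\mathfrak{m}_0$-primary precisely because $\gamma_\mathbf{u}$ is an isomorphism of $R$-modules compatible with the self-action of $R$.

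For the first (operator) statement, the key identity is
\begin{equation*}
A(\partial_\mathbf{x}) \bullet \gamma_\mathbf{u}(m) \,\,=\,\, \gamma_\mathbf{u}\bigl(A(\partial_\mathbf{x}) \bullet m\bigr) \qquad \text{for every } m \in R^k,
\end{equation*}
which I would check by noting that each $\partial_{x_i}$ commutes with the pullback $x_i \mapsto x_i + u_i$ (the chain rule contributes no extra factor because $u_i$ is constant), and then extending by $K$-linearity and Leibniz to arbitrary $A(\partial_\mathbf{x})$. Since $\gamma_\mathbf{u}^{-1}(\mathfrak{m}_0) = \mathfrak{m}_\mathbf{u}$, this identity yields the equivalence: $A \bullet m \in \mathfrak{m}_\mathbf{u}$ for all $m \in M$ if and only if $A \bullet \hat m \in \mathfrak{m}_0$ for all $\hat m \in \hat M$, which is exactly the Noetherian operator characterization in \eqref{eq:modulemembership} for the two primary modules.

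For the dual (solution) statement, the key calculation is the shift identity
\begin{equation*}
\partial_{z_i} \bullet \bigl(B(\mathbf{z}) \exp(\mathbf{u}^t\mathbf{z})\bigr) \,\,=\,\, \bigl((\partial_{z_i} + u_i) \bullet B\bigr)(\mathbf{z}) \cdot \exp(\mathbf{u}^t\mathbf{z}),
\end{equation*}
which is just the product rule. Iterating and extending componentwise gives, for any $m(\mathbf{x}) \in R^k$,
\begin{equation*}
m(\partial_\mathbf{z}) \bullet \bigl(B(\mathbf{z}) \exp(\mathbf{u}^t\mathbf{z})\bigr) \,\,=\,\, \bigl(\gamma_\mathbf{u}(m)(\partial_\mathbf{z}) \bullet B\bigr)(\mathbf{z}) \cdot \exp(\mathbf{u}^t\mathbf{z}),
\end{equation*}
so the solution condition for $M$ on the vector $B(\mathbf{z}) \exp(\mathbf{u}^t\mathbf{z})$ translates, via $\hat M = \gamma_\mathbf{u}(M)$ and the non-vanishing of $\exp(\mathbf{u}^t\mathbf{z})$, into the solution condition for $\hat M$ on $B(\mathbf{z})$.

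I do not anticipate a genuine obstacle: both statements reduce to formal commutation identities between $\partial$ and translation/multiplication-by-exponential. The only bookkeeping is to make sure the action on $R^k$ is treated coordinate-wise and that the scalar factor $\exp(\mathbf{u}^t\mathbf{z})$ commutes through the vector-valued setting, which is automatic. The mild subtlety worth noting is the symmetry in duality: the shift $\gamma_\mathbf{u}$ of $R$-elements in $\mathbf{x}$ corresponds to twisting by $\exp(\mathbf{u}^t\mathbf{z})$ on the $\mathbf{z}$-side, consistent with the Fourier viewpoint highlighted in \eqref{eq:absolutelyessential}.
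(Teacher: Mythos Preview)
Your proposal is correct and matches the paper's intent: the paper does not give a proof at all, merely stating before the proposition that ``the following two results are straightforward.'' Your two commutation identities---$A(\partial_\mathbf{x})$ commuting with $\gamma_\mathbf{u}$, and conjugation by $\exp(\mathbf{u}^t\mathbf{z})$ shifting $\partial_{z_i}$ to $\partial_{z_i}+u_i$---are exactly the straightforward verifications the authors have in mind, so there is nothing to compare.
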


We note that
all Noetherian operators over a $K$-rational point can be taken to have constant coefficients.
This follows from Theorem \ref{thm:finitedim}. This observation
 reduces the computation of solutions for a primary module
 to finding the polynomial solutions of the translated module. Next, we 
  bound the degrees of these polynomials.

\begin{proposition}\label{prop:poly_sol_deg_bound}
Let $\hat M \subseteq R^k$ be an $\,\mathfrak{m}_0$-primary module.
There exists an integer $r$ such that $\,\mathfrak{m}_0^{r+1} R^k \subseteq \hat M$. The 
space $\operatorname{Sol}(\hat M)$ consists of $k$-tuples of polynomials of degree $\leq r$.
\end{proposition}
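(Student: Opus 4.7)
The plan is to separate the statement into its two claims and prove them in order.

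For the existence of $r$, I would argue as follows. Since $\hat M$ is $\mathfrak{m}_0$-primary, the only associated prime of the quotient module $R^k/\hat M$ is $\mathfrak{m}_0$. Consequently, the annihilator ideal $I_0 = \operatorname{Ann}_R(R^k/\hat M)$ is itself $\mathfrak{m}_0$-primary, so $\sqrt{I_0} = \mathfrak{m}_0$. Because $R$ is Noetherian and $\mathfrak{m}_0$ is finitely generated, there is an integer $r$ with $\mathfrak{m}_0^{r+1} \subseteq I_0$. Since $I_0$ annihilates $R^k/\hat M$ by definition, this yields $\mathfrak{m}_0^{r+1} R^k \subseteq \hat M$, as desired.

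For the degree bound on solutions, fix any $\psi = (\psi_1,\ldots,\psi_k) \in \operatorname{Sol}(\hat M)$. For every multi-index $\alpha \in \NN^n$ with $|\alpha| = r+1$ and every standard basis vector $e_j \in R^k$, the element $x^\alpha e_j$ lies in $\mathfrak{m}_0^{r+1} R^k \subseteq \hat M$. Since $\psi$ is a solution, applying $x^\alpha e_j$ (viewed as a differential operator via $x_i = \partial_{z_i}$) to $\psi$ must give zero, that is, $\partial_{\bf z}^\alpha \psi_j = 0$ for all $|\alpha|=r+1$ and all $j$. A standard fact (true for $C^\infty$ functions on a connected convex open set, and more generally for distributions) states that if every partial derivative of order $r+1$ of a function vanishes identically, then the function is a polynomial of degree at most $r$. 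Applying this coordinatewise, each $\psi_j$ is a polynomial of degree $\leq r$ in ${\bf z}$, and hence $\psi$ is a $k$-tuple of such polynomials.

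The main obstacle is essentially none beyond bookkeeping: both claims are formal once one remembers that, under the identification $x_i = \partial_{z_i}$ from Section \ref{sec4}, containment of a monomial $x^\alpha e_j$ in $\hat M$ becomes a vanishing statement on the $\alpha$-th partial derivative of the $j$-th coordinate of any solution. The only subtlety worth flagging is invoking the right regularity class for $\mathcal{F}$; working with $\mathcal{F} = C^\infty(\Omega)$ as fixed before Theorem \ref{thm:Palamodov_Ehrenpreis}, the implication ``all $(r{+}1)$-th derivatives vanish $\Rightarrow$ polynomial of degree $\leq r$'' holds on the connected convex open set $\Omega$ by repeated integration, which is consistent with the finite-dimensionality of $\operatorname{Sol}(\hat M)$ asserted in Theorem \ref{thm:finitedim}.
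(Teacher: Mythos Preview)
Your proof is correct. The paper does not actually supply a proof for this proposition: the sentence preceding Proposition~\ref{prop:translation} simply declares both results to be ``straightforward'' and moves on. Your argument fills in exactly the standard details the authors leave implicit---the existence of $r$ from $\sqrt{\operatorname{Ann}_R(R^k/\hat M)}=\mathfrak{m}_0$, and the degree bound from the inclusion $x^\alpha e_j\in\hat M$ forcing $\partial_{\mathbf z}^\alpha\psi_j=0$ for every solution $\psi$.
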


Propositions \ref{prop:translation} and \ref{prop:poly_sol_deg_bound} 
furnish a method for  computing solutions of an
$\mathfrak{m}_\mathbf{u}$-primary module $M$.
We start by translating $M$ so that it becomes the $\mathfrak{m}_0$-primary module $\hat M$.
The integer $r$ provides an ansatz 
$ \sum_{j=1}^k\sum_{|\alpha| \leq r} v_{\alpha,j} \,\mathbf{z}^\alpha e_j $
for the polynomial solutions. The coefficients $v_{\alpha,j}$
are computed by linear algebra over the ground field $K$.
Here are the  steps:
\begin{enumerate}
	\item Let $r$ be the smallest integer such that 
	$\mathfrak{m}_0^{r+1} R^k \subseteq \hat M$.
\item Let $\operatorname{Diff}( \hat M)$ be the matrix whose entries are the
polynomials $\hat m_i \bullet ({\bf z}^\alpha e_j) \in R$.
The row labels are the generators $\hat m_1, \dotsc, \hat m_l $ of $\hat M$, and the column labels
are the ${\bf z}^\alpha e_j$.
	\item 
    Let $\ker_K(\operatorname{Diff}(\hat M))$ denote the
    $K$-linear subspace of the $R$-module $\ker_R(\operatorname{Diff}(\hat M))$ 
    consisting of vectors $(v_{\alpha,j}) $   with all entries in $K$.
Every such vector
gives a solution
\begin{align}\label{eq:step_4_identification}
	\sum_{j=1}^k \sum_{|\alpha| \leq r} v_{\alpha, j}\, {\bf z}^\alpha 
	\exp(\mathbf{u}^t \mathbf{z}) \,e_j \,\in \, \operatorname{Sol}(M).
\end{align}
\end{enumerate}

\begin{example}$[n=k=r=2]$ \label{ex:quotex1} \ \ The
following module is $\mathfrak{m}_0$-primary of multiplicity three:
\begin{equation}
\label{eq:quotexIN}
M \,\,\, = \,\,\, {\rm image}_R \begin{bmatrix} 
\,\,\partial_1^3 \,&\, \,\partial_2 - c_1 \partial_1^2 - c_2 \partial_1 \,& \,
\, c_3 \partial_1^2 + c_4 \partial_1 + c_5\, \,\\
\,\, 0 & 0 & 1 \\
\end{bmatrix}.
\end{equation}
Here $c_1,c_2,c_3,c_4,c_5$ are arbitrary constants in $K$.
The matrix ${\rm Diff}(M)$ has three rows, one
for each generator of $M$, and it has $12$ columns,
indexed by $e_1,z_1e_1,\ldots , z_2^2 e_1,
e_2,z_1e_2,\ldots , z_2^2 e_2$.
The space ${\rm ker}_K({\rm Diff}(M))$ is $3$-dimensional.
A basis furnishes the three polynomial solutions
\begin{equation}
\label{eq:quotexOUT}
\begin{bmatrix}
\,-1\, \\ \, c_5,
\end{bmatrix} \, ,\,\,\,
\begin{bmatrix}
  -(z_1+c_2 z_2) \\
  \,   c_5 z_1+c_2 c_5 z_2+c_4 \,\, \end{bmatrix}
\, , \,\,\,
\begin{bmatrix}
  -((z_1+c_2 z_2)^2 + 2 c_1 z_2)  \\
\, c_5 (z_1{+}c_2 z_2)^2 + 2 c_4z_1 + 2 (c_1 c_5{+}c_2 c_4)z_2  + 2c_3\,
 \end{bmatrix}.
\end{equation}
\end{example}

We now turn to Algorithm \ref{alg:solvePDE}.
The input and output are as described in (\ref{eq:task2}).
The method  was introduced in \cite[Algorithm 4.6]{CC} 
for computing differential primary decompositions.
We use it
for solving PDE.
It is implemented in {\tt Macaulay2} under the command {\tt solvePDE}.
In our discussion,  the line numbers refer to the corresponding lines of pseudocode in Algorithm~\ref{alg:solvePDE}.

\begin{algorithm}
\hspace*{\algorithmicindent} \textbf{Input:} An arbitrary submodule $M$ of $R^k$ \\
\hspace*{\algorithmicindent} \textbf{Output:} List of associated primes with corresponding Noetherian multipliers.
\caption{SolvePDE}
\begin{algorithmic}[1]
\For{each associated prime ideal $P$ of $M$}\label{alg:line1}
  \State $U\gets MR_P^k\cap R^k$\label{alg:line2}
  \State $V\gets(U:P^\infty)$\label{alg:line3}
  \State $r \gets $ the smallest number such that $V\cap P^{r+1}R^k$ is a subset of $U$\label{alg:line4}
  \State $\mathcal{S} \gets $ a maximal set of independent variables modulo $P$\label{alg:line5}
  \State $\mathbb{K} \gets \operatorname{Frac}(R/P)$\label{alg:line6}
  \State $T \gets \mathbb{K}[{y_i} \colon x_i \not \in \mathcal{S}]$\label{alg:line7}
  \State $\gamma \gets$ the map defined in \eqref{gamma}\label{alg:line8}
  \State $\mathfrak{m} \gets $ the irrelevant ideal in $T$\label{alg:line9}
  \State $\hat{U}\gets\gamma(U)+\mathfrak{m}^{r+1}T^k$\label{alg:line10}
  \State $\hat{V}\gets\gamma(V)+\mathfrak{m}^{r+1}T^k$\label{alg:line11}
  \State $N \gets$ a $\mathbb{K}$-vector space basis of the space of $k$-tuples of polynomials of degree $\leq r$\label{alg:line12}
  \State $\operatorname{Diff}(\hat{U}) \gets$ the matrix given by the $\bullet$-product of generators of $\hat{U}$ with elements of $N$\label{alg:line13}
  \State  $\operatorname{Diff}(\hat{V}) \gets$ the matrix given by the $\bullet$-product of generators of $\hat{V}$ with elements of $N$\label{alg:line14}
  \State $\mathcal{K} \gets \ker_{\mathbb{K}}(\operatorname{Diff}(\hat{U}))/\ker_{\mathbb{K}}(\operatorname{Diff}(\hat{V}))$\label{alg:line15}
  \State $\mathcal{A} \gets $ a $\mathbb{K}$-vector space basis of  $\mathcal{K}$\label{alg:line16}
  \State $\mathcal{B} \gets $ lifts of the vectors in $\mathcal{A} \subset T^k$ to vectors in $R[\mathtt{d}x_1,\dotsc,\mathtt{d}x_n]^k$\label{alg:line17}
  \State \Return the pair $(P, \mathcal{B})$\label{alg:line18}
\EndFor
\end{algorithmic}\label{alg:solvePDE}
\end{algorithm}

\begin{description}
    \item[Line \ref{alg:line1}]
    We begin by finding all associated primes of $M$. 
    These define the irreducible   varieties $V_i$ in (\ref{eq:task2}).
      By \cite[Theorem 1.1]{EHV}, the associated primes of codimension $i$ coincide with the minimal primes of  $\operatorname{Ann} \operatorname{Ext}^i_R(M,R)$.
      This reduces the problem of finding associated primes of a module to the more familiar problem of finding minimal primes of a polynomial ideal.
      This method is implemented and distributed with {\tt Macaulay2} starting from version 1.17 via the command \texttt{associatedPrimes R\^{}k/M}.
      See \cite[Section~2]{CC}.    
\end{description}

The remaining steps are repeated for each $P \in \operatorname{Ass}(M)$.
For a fixed associated prime $P$, our goal is to 
identify the contribution to $\operatorname{Sol}(M)$ of the $P$-primary component of $M$.

\begin{description}

    \item[Lines \ref{alg:line2}--\ref{alg:line3}]
      To achieve this goal, we study solutions for two different $R$-submodules of $R^k$.
      The first one, denoted $U$, is the intersection of all $P_i$-primary components of $M$, where $P_i$ are the associated primes contained in $P$. Thus $U = MR^k_P \cap R^k$, which
  is the extension-contraction module of $M$ under localization at $P$.   
 It is computed as $U = (M : f^\infty)$, where $f \in R$ is 
 contained in every associated prime $P_j$ \emph{not} contained in~$P$.

      The second module, denoted $V$, is the intersection of all $P_i$-primary components of $M$, where $P_i$ is \emph{strictly} contained in $P$.
      Hence $V = (U : P^\infty)$ is the saturation of $U$ at $P$.
      We have $U = V \cap Q$, where $Q$ is a $P$-primary 
      component of $M$.
            Thus the difference between the solution spaces $\operatorname{Sol}(U)$ and $\operatorname{Sol}(V)$ is caused by the primary module $Q$.

When $P$ is a minimal prime, $U$ is the unique $P$-primary component of $M$, and $V = R^k$.
      
    \item[Line \ref{alg:line4}] 
      The integer $r$ bounds the  degree of Noetherian multipliers associated to $U$ 
      but not $V$.
    Namely,  if  the function $\phi(\mathbf{z}) = B(\mathbf{x}, \mathbf{z}) \exp(\mathbf{x}^t \mathbf{z})$ lies in $\operatorname{Sol}(U) \backslash \operatorname{Sol}(V)$
      for all $\mathbf{x} \in V(P)$,
       then the $\mathbf{z}$-degree of the polynomial $B(\mathbf{x}, \mathbf{z})$ 
       is at most $r$.
    This will  lead to an ansatz for the Noetherian multipliers 
  responsible for the difference between $\operatorname{Sol}(U)$ and $\operatorname{Sol}(V)$.

    \item[Lines \ref{alg:line5}--\ref{alg:line8}]  The modules $U$ and $V$ are
   reduced to simpler modules $\hat U$ and $\hat V$ with similar properties.
    Namely, $\hat U$ and $\hat V$
    are primary and their characteristic varieties are the origin.
    This reduction involves two new ingredients: a new polynomial ring $T$ in fewer variables 
   over a field $\mathbb{K}$ that is a finite extension of $K$, and a 
   ring map $\gamma \colon R \to T$.
    
    Fix a maximal set $\mathcal{S} = \{x_{i_1},\ldots,x_{i_{n-c}}\}$ with $P\cap K[x_{i_1},\ldots,x_{i_{n-c}}]=\{0\}$. We define $T\coloneqq \mathbb{K}[y_i: x_i\notin \mathcal{S}]$, where $\mathbb{K}=\operatorname{Frac}(R/P)$.
    This is a polynomial ring in $n - |\mathcal{S}| = c$ new variables $y_i$, 
    corresponding to the $x_i$ not in the set $\mathcal{S}$ of independent variables.
Writing $u_i$ for the image of $x_i$ in $\mathbb{K}=\operatorname{Frac}(R/P)$,
the ring map $\gamma$ is defined  as follows:
    \begin{equation}\label{gamma}\gamma \colon R \to T, \quad x_i\mapsto \begin{cases} y_i+u_i, & \text{ if } x_i\notin S,\\
     \quad u_i, & \text{ if } x_i\in S.
    \end{cases} \end{equation}
    By abuse of notation, we denote by $\gamma$ the extension of 
    (\ref{gamma}) to a map $R^k \to T^k$. 

  \item[Lines \ref{alg:line9}--\ref{alg:line11}] Let $\mathfrak{m} := \langle y_i \colon x_i \not \in \mathcal{S} \rangle $
   be the irrelevant ideal of $T$.
    We define the $T$-submodules 
$$      \hat U := \gamma(U) + \mathfrak{m}^{r+1} T^k  \quad {\rm and} \quad \hat V := \gamma(U) + \mathfrak{m}^{r+1} T^k
 \quad {\rm of} \,\,\,T^k.   $$
    These modules are $\mathfrak{m}$-primary: their solutions are finite-dimensional $\mathbb{K}$-vector~spaces consisting of polynomials of degree $\leq r$.
    The polynomials in $\operatorname{Sol}(\hat U) \backslash \operatorname{Sol}(\hat V)$ capture the difference between $\hat U$ and $\hat V$, 
    and also the difference between $U$ and $V$ after lifting.

  \item[Lines \ref{alg:line12}--\ref{alg:line14}] 
    We construct matrices $\operatorname{Diff}(\hat U)$ and $\operatorname{Diff}(\hat V)$ 
    with entries in $\mathbb{K} [z_i \colon x_i \not\in \mathcal{S}] $.  
    As in (\ref{eq:step_4_identification}), their kernels over $\KK$  correspond to 
        polynomial solutions of $\hat U$ and $\hat V$.
The set $N =  \{{\bf z}^\alpha e_j \colon |\alpha| \leq r, j=1,\dotsc, k\}$
is a   $\mathbb{K}$-basis for
elements of degree $\leq r$ in $\mathbb{K}[z_i \colon x_i \not\in \mathcal{S}]^k$.
    The $y_i$-variables act on the $z_i$ variables as partial derivatives, i.e. $y_i = \frac{\partial}{\partial z_i}$.
    We define the matrix $\operatorname{Diff}(\hat U)$ as follows.
    Let $\hat{U}_1,\ldots, \hat{U}_\ell$ be generators of $\hat{U}$. The rows of $\operatorname{Diff}(\hat{U})$ are indexed by these generators,  the columns are indexed by $N$, and the entries are 
    the polynomials $\hat{U}_i\bullet {\bf z}^\alpha e_j$.
    In the same way we construct $\operatorname{Diff}(\hat{V})$.

  \item[Lines \ref{alg:line15}--\ref{alg:line16}]
    Let $\ker_\mathbb{K}(\operatorname{Diff}(\hat U))$ be the space of vectors in the kernel of $\operatorname{Diff}(\hat U)$ whose entries are in $\mathbb{K}$.
  The $\mathbb{K}$-vector space
         $\ker_\mathbb{K}(\operatorname{Diff}(\hat U))$ 
    parametrizes the polynomial solutions
        \begin{align*}
      \sum_{j=1}^k \sum_{|\alpha| \leq r} v_{\alpha,j} \,{\bf z}^\alpha e_j 
      \,\, \in \,\, \operatorname{Sol}(\hat U).
    \end{align*}
    The same holds for $\hat V$.
    The quotient space $\mathcal{K}:= \ker_\mathbb{K}(\operatorname{Diff}(\hat U)) / \ker_\mathbb{K}(\operatorname{Diff}(\hat V))$ characterizes excess solutions in $\operatorname{Sol}(\hat U)$ relative to $\operatorname{Sol}(\hat V)$.
    Write $\mathcal{A}$ for a $\mathbb{K}$-basis of $\mathcal{K}$.

  \item[Lines \ref{alg:line17}--\ref{alg:line18}]
    We interpret  $\mathcal{A}$ as a set of Noetherian multipliers
    for $M$ by performing a series of lifts and transformations.
    For each element $\bar{\mathbf{v}} \in \mathcal{A}$, we choose a representative $\mathbf{v} \in \ker_\mathbb{K}(\operatorname{Diff}(\hat U))$.
    The entries of $\mathbf{v}$ are in $\mathbb{K} = \operatorname{Frac}(R/P)$, and may contain denominators.
    Multiplying $\mathbf{v}$ by a common multiple of the denominators yields a vector with entries in $R/P$,  indexed by $N$. We lift this to a 
     vector $\mathbf{u} = (u_{\alpha,j})$ with entries in $R$.
    The Noetherian multiplier corresponding to ${\bf u}$ is the following
    vector in \mbox{$R[\mathtt{d}x_i \colon x_i \not \in \mathcal{S}]^k$}:
    \begin{align*}
      B(\mathbf{x}, \mathbf{\mathtt{d}x}) \,\,=\,\,\, \sum_{j=1}^k \sum_{|\alpha| \leq r}
      \, u_{\alpha,j}(\mathbf{x})\, \mathbf{\mathtt{d}x}^\alpha e_j .
    \end{align*}
        Applying the map $\bar{\mathbf{v}} \mapsto \mathbf{u}$ to
  each $\bar{\mathbf{v}} \in \mathcal{A}$ yields a set $\mathcal{B}$ of Noetherian multipliers. These multipliers describe the contribution of the $P$-primary component of $M$ to ${\rm Sol}(M)$.
\end{description}

The output of Algorithm 
\ref{alg:solvePDE} is a list of pairs $(P, \mathcal{B})$, where $P$ ranges over
${\rm Ass}(M)$ and $\mathcal{B} = \{B_1,\ldots,B_m\}$ is a
subset of $R[\mathtt{d}x_1, \dotsc, \mathtt{d}x_n]^k$.
The cardinality $m$ is the multiplicity of $M$ along $P$.
The output describes the solutions to the PDE given by $M$. Consider the  functions 
$$\phi_P(\mathtt{d}x_1,\ldots,\mathtt{d}x_n)\,\,= \,\,
\sum_{i=1}^m \int_{V(P)}B_i(\mathbf{x}, \mathbf{\mathtt{d}x})\exp(x_1\,\mathtt{d}x_1+\,\cdots\,+x_n\,\mathtt{d}x_n)\,{d\mu_i}(\mathbf{x}).$$
Then the space of solutions to $M$ consists of all functions
$$\sum_{P \in {\rm Ass}(M)} \phi_P(\mathtt{d}x_1,\ldots,\mathtt{d}x_n).$$
A differential primary decomposition of $M$ is obtained from this
by reading $\mathtt{d}x_i$ as $\partial_{x_i}$. 
Indeed, the command {\tt differentialPrimaryDecomposition}
described in \cite{CC} is identical to
 our command {\tt solvePDE}. All examples
in \cite[Section 6]{CC} can be interpreted as solving PDE.

\section{Schemes and Coherent Sheaves}
\label{sec6}

The concepts of schemes and coherent sheaves are central to modern algebraic geometry.
These generalize varieties and vector bundles, and they encode
geometric structures with multiplicities. The point is that
the supports of coherent sheaves and other schemes are generally nonreduced.
 We here argue that our
linear PDE offer a useful way to think about the geometry of these objects.
 That perspective  motivated the writing of \cite[Section~3.3]{INLA}.
  
The affine schemes we consider are defined by ideals $I$ in a polynomial ring
$R $. Likewise, 
submodules $M$ of $R^k$ represent
coherent sheaves on $\CC^n$. We study the
affine scheme ${\rm Spec}(R/I)$ and the coherent sheaf given by
the module $R^k/M$. The underlying geometric objects are the  affine
varieties $V(I)$ and $V(M)$ in $\CC^n$. The latter was discussed in Section~\ref{sec3}.
The solution spaces ${\rm Sol}(I)$ or ${\rm Sol}(M)$ furnish
nonreduced structures on these varieties,
encoded in the integral representations due to Ehrenpreis--Palamodov.
According to Section \ref{sec4}, these are dual to differential primary decompositions.
Coherent 
sheaves were a classical tool in the analysis of linear PDE, 
but in the analytic category, where their role was largely
  theoretical.  The Ehrenpreis--Palamodov Fundamental Principle
appears in H\"ormander's book  under the header
 {\em Coherent analytic sheaves on Stein manifolds}
\cite[Chapter VII]{HORMANDER}. Likewise,
Treves' exposition, in the title of \cite[Section 3.2]{Treves},
 calls for {\em Analytic sheaves to the rescue}.
 By contrast, sheaves in this paper are 
 concrete and algebraic:  they are modules in {\tt Macaulay2}.

One purpose of this section is to explore how PDE and
their solutions behave under degenerations. We consider
ideals and modules whose generators depend on a parameter $\epsilon$.
This is modelled algebraically by working over 
the field $K = \CC(\epsilon)$ of
rational functions in the variable~$\epsilon$.
Algorithm \ref{alg:solvePDE}  can be applied to the polynomial
ring $R = K[x_1,\ldots,x_n]$ over that field.
We think of $\epsilon$ as a small quantity and we are interested in what
happens when~$\epsilon \rightarrow 0$.

Our discussion in this section is very informal. This is by design.
We present a sequence of examples that illustrates the geometric ideas.
The only formal result is Theorem \ref{thm:quot4}, which concerns
the role of the Quot scheme in parametrizing systems of linear PDE.

\begin{example}[$n=2$] \label{ex:n2}
Consider the prime ideal $I_\epsilon = \langle \partial_1^2 - \epsilon^2 \partial_2 \rangle$.
For nonzero parameters~$\epsilon $,
by  Theorem \ref{thm:Palamodov_Ehrenpreis}, the solutions 
to this PDE are represented as
one-dimensional integrals
 $$ \alpha_\epsilon(z_1,z_2) \, \,= \, \int {\rm exp} (\epsilon\, t\, z_1  \,+\, t^2 z_2) dt  \quad \in \,{\rm Sol}(I).  $$
By taking the limit for $\epsilon \rightarrow 0$, this yields arbitrary functions $a(z_2)$.
These are among the solutions to $I_0 = \langle\, \partial_1^2 \,\rangle $.
Other limit solutions are obtained via the reflection $t \mapsto -t$. Set
$$ \beta_\epsilon(z_1,z_2) \,\, = \, \int {\rm exp} (-\epsilon\, t\, z_1  \,+\, t^2 z_2) dt \quad \in \,{\rm Sol}(I).  $$
Note the similarity to the one-dimensional wave equation (\ref{eq:wave2})
with $c = \epsilon$. The solution for $\epsilon = 0$ is given
in (\ref{eq:wave3}). This is found from the integrals above by
taking the following limit:
\begin{equation}
\begin{aligned}
\label{eq:doubleline}
 {\rm lim}_{\epsilon \rightarrow 0} \,\frac{1}{2 \epsilon} \bigl( \alpha_\epsilon(z_1,z_2) - \beta_\epsilon(z_1,z_2)
 \bigr) \,& =  \int {\rm lim}_{\epsilon \rightarrow 0}\frac{{\rm exp}(\epsilon tz_1+t^2z_2)-{\rm exp}(-\epsilon tz_1+t^2z_2)}{2\epsilon}dt\\
 &=\, \int tz_1{\rm exp}(t^2z_2) dt\\
& = \, z_1\, b(z_2). 
 \end{aligned}
\end{equation}
We conclude that the general solution to $I_0$ equals
$\, \phi(z_1,z_2) \,\, = \,\, a(z_2) + z_1 b(z_2)$, where $b$ is
any function in one variable.
The calculus limit in (\ref{eq:doubleline}) realizes a
scheme-theoretic limit in the sense of algebraic geometry.
Namely, two lines in (\ref{eq:qc}) converge to a 
double line in $\CC^2$.

\end{example}

\begin{example}[$n=3$]
For $\epsilon \not= 0$ consider the curve 
 $t \mapsto (\epsilon t^3, t^4, \epsilon^2 t^2)$ in $\CC^3$.
Its prime ideal equals $I_\epsilon =  \langle \partial_1^2 - \partial_2 \partial_3, \partial_3^2 - \epsilon^4 \,\partial_2 \rangle$.
The solution space  ${\rm Sol}(I_\epsilon)$ consists of the functions 
\begin{equation}
\label{eq:phi3}
  \phi(z_1,z_2,z_3) \, = \, \int {\rm exp}( \epsilon t^3 z_1 + t^4 z_2 + \epsilon^2 t^2 z_3) dt.
 \end{equation}
What happens to these functions when $\epsilon $ tends to zero?
We address this question algebraically.
The scheme-theoretic limit of the given ideal $I_\epsilon$ is the ideal in Example~\ref{ex:312}.
This is verified by a Gr\"obner basis computation
(cf.~\cite[Section 15.8]{Eisenbud}).
Passing from ideals to their varieties, we see
a toric curve in $\CC^3$ that degenerates to a line with multiplicity four.

We claim that the formula  in (\ref{eq:niceintrep}) arises from
(\ref{eq:phi3}),  just as in Example \ref{ex:n2}. Namely,
set $i = \sqrt{-1}$ and let
$\phi_s \in {\rm Sol}(I_\epsilon)$ be the function that is obtained from $\phi$ 
in (\ref{eq:phi3}) by replacing  the parameter $t$ with $ i^s t$.
Then the following four functions on the left are solutions to $I_\epsilon$:
$$
\begin{matrix}
  \phi_0+\phi_1+\phi_2+\phi_3 & \longrightarrow & a(z_2),  \\
  \epsilon^{-1} (\phi_0+i \phi_1+i^2 \phi_2+i^3 \phi_3) & \longrightarrow & z_1 b(z_2),  \\
 \epsilon^{-2} (\phi_0+i^2 \phi_1+i^4 \phi_2+i^6 \phi_3) & \longrightarrow & z_1^2 c'(z_2) + 2 z_3 c(z_2), \\
 \epsilon^{-3} (\phi_0+i^3 \phi_1+i^6 \phi_2+i^9 \phi_3) & \longrightarrow & z_1^3 d'(z_2) + 6 z_1z_3 d(z_2).
\end{matrix}
$$
The functions obtained as limits on the right are precisely 
the four summands seen in (\ref{eq:niceintrep}).
Thus, the solution spaces to this family of PDE 
reflect the degeneration of the toric curve.
\end{example}

Such limits make sense also for modules.
 If a module $M_\epsilon \subseteq R^k$ 
depends on a parameter $\epsilon$ then we study
its solution space ${\rm Sol}(M_\epsilon)$ as $\epsilon $ tends to zero.
Geometrically, we examine flat families of coherent sheaves on $\CC^n$ or 
on $\PP^{n-1}$. A typical scenario comes from the action of the torus $(\CC^*)^n$, where 
Gr\"obner degenerations arise as limits under one-parameter subgroups.
The limit objects are monomial ideals (for $k=1$) or torus-fixed submodules (for $k \geq 2$).
The next example illustrates their rich structure with an explicit family of torus-fixed submodules.

\begin{example}[$n=2, k=3, l=6$] \label{ex:236}
Given a $3 \times 6$ matrix $A$ with random real entries, we~set
$$ M \,\,= \,\, {\rm image}_R \bigl(\, A \cdot {\rm diag}(\partial_1,\partial_1^2,\partial_1^3, \partial_2,\partial_2^2,\partial_2^3)\,\bigr) \quad \subset \quad R^3. $$
Then $M$ is torus-fixed and $\mathfrak{m}$-primary,
where $\mathfrak{m} = \langle \partial_1,\partial_2 \rangle$, and
${\rm amult}(M) = 10$. A basis of ${\rm Sol}(M)$
is given by ten polynomial solutions,
namely the standard basis vectors $e_1,e_2,e_3$, 
four vectors that are multiples of
$z_1,z_1,z_2, z_2$, and three vectors that are
multiples $z_1^2, z_1 z_2, z_2^2$. The reader is invited to
verify this with {\tt Macaulay2}.
Here is the input for one concrete instance:
\begin{verbatim}
R = QQ[x1,x2]
M = image matrix {{7*x1,5*x1^2,8*x1^3, 5*x2,9*x2^2,5*x2^3},
                  {8*x1,9*x1^2,8*x1^3, 4*x2,2*x2^2,4*x2^3},
                  {3*x1,2*x1^2,6*x1^3, 4*x2,4*x2^2,7*x2^3}}
solvePDE(M)
\end{verbatim}
By varying the matrix $A$, and by extracting 
the vector multipliers of  $1,z_1$ and $z_1^2$,
we obtain any complete flag of subspaces in $\CC^3$. 
The vector multipliers of $1$, $z_2$, and $z_2^2$ give us another complete flag of subspaces in $\mathbb{C}^3$, and the multiplier of $z_1z_2$ gives us the intersection line of the planes corresponding to the multipliers of $z_1$ and $z_2$.
This is illustrated in Figure~\ref{fig:flag}.
Thus {\em flag varieties}, with possible additional structure, appear naturally in such families.

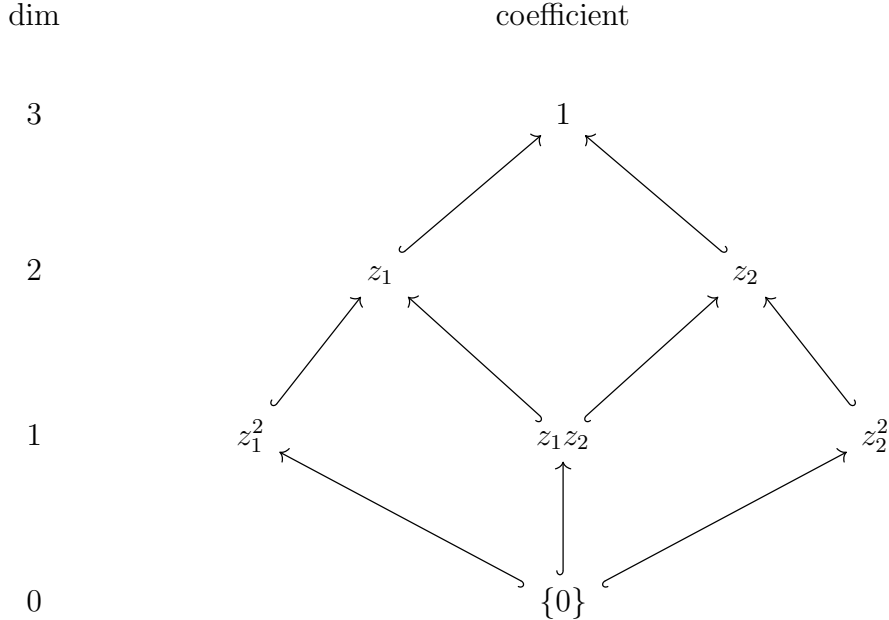
\begin{figure}[H]
  \centering
  \begin{tikzcd}
    {\text{dim}} &&&& {\text{coefficient}} \\
    3 &&&& 1 \\
    \\
    2 &&& {z_1} && {z_2} \\
    \\
    1 && {z_1^2} && {z_1z_2} && {z_2^2} \\
    \\
    0 &&&& {\{0\}}
    \arrow[hook, from=4-4, to=2-5]
    \arrow[hook, from=6-3, to=4-4]
    \arrow[hook, from=8-5, to=6-3]
    \arrow[hook', from=8-5, to=6-7]
    \arrow[hook', from=6-7, to=4-6]
    \arrow[hook', from=4-6, to=2-5]
    \arrow[hook, from=8-5, to=6-5]
    \arrow[hook, from=6-5, to=4-4]
    \arrow[hook', from=6-5, to=4-6]
  \end{tikzcd}
  \caption{The coefficient vectors of the solutions to the PDE in Example~\ref{ex:236} correspond to the above linear spaces with the given inclusions. We obtain two complete flags in $\mathbb{C}^3$, along with one interaction between the two.
  Experts on {\em quiver representations} will take note.
  \label{fig:flag}}
\end{figure}

\end{example}

The degenerations of ideals and modules we saw point us
to Hilbert schemes and Quot schemes.
Let us now also take a fresh look at Example \ref{ex:quotex1}.
The modules $M$ in that example form a flat family over the affine space $\CC^5$ with coordinates
${\bf c} = (c_1,c_2,c_3,c_4,c_5)$.
For ${\bf c} = 0$ we obtain the PDE whose solution space equals
$\CC \{e_1,z_1 e_1, z_1^2 e_1 \}$. But, what happens when 
one of the coordinates of ${\bf c}$ tends to infinity?
That limit exists in the Quot scheme.

In our context,
Hilbert schemes and Quot schemes serve as
parameter spaces for primary ideals and primary modules.
This was shown for ideals in \cite{CPS} and for modules in \cite{CC}.
In what follows we shall discuss the latter case.
Fix a prime ideal $P$ of codimension $c$ in $R = K[x_1,\ldots,x_n]$.
Write $\mathbb{K}$ for the field of fractions of the integral domain $R/P$,
as in Line~\ref{alg:line6} of Algorithm \ref{alg:solvePDE}.
We write $u_1,\ldots,u_n$ for the images in $\mathbb{K}$ 
of the variables $x_1,\ldots,x_n$ in~$R$.
After possibly permuting these variables, we shall assume that 
$P \cap K[x_{c+1},\ldots,x_n] = \{0\}$.  The set $\{u_{c+1},\ldots,u_n \}$ is 
algebraically independent over $K$,  so it serves as $\mathcal{S}$ in  Line \ref{alg:line5}.

Consider the formal power series ring $S =   \KK[[y_1,\ldots,y_c]]$
where $y_1,\ldots,y_c$ are new variables. This is a local ring
with maximal ideal $ \mathfrak{m} = \langle y_1,\ldots,y_c \rangle$.
We are interested in $\mathfrak{m}$-primary
submodules $L$ of $S^k$. The quotient module
 $S^k/L$ is finite-dimensional as a $\KK$-vector space, and we write
$\nu = {\rm dim}_\KK(S^k/L)$ for its dimension.  The {\em punctual Quot scheme}
is a parameter space whose points are precisely those modules. We denote the Quot scheme~by
\begin{equation}
\label{eq:quotscheme}
 {\rm Quot}^\nu (S^k) \,\,= \, \,
\bigl\{ \, L \subset S^k \,:\, L \,\,\hbox{submodule with}\,\, {\rm Ass}(L) = \mathfrak{m} 
\,\,\hbox{and}\,\, {\rm dim}_\KK(S^k/L) = \nu \,\bigr\}. 
\end{equation}
This is a quasi-projective scheme over $\KK$, i.e.~it can be defined
by a finite system of polynomial equations and inequations in a large but finite
set of variables. Each solution to that system is one submodule $L$. This construction
goes back to Grothendieck, and it plays a fundamental role in parametrizing
coherent sheaves in algebraic geometry. While a constructive approach to Quot schemes
exists, thanks to Skjelnes \cite{Skjelnes}, the problem remains to write 
defining equations  for ${\rm Quot}^\nu (S^k) $   in a computer-readable format, for small values of $c, k, \nu$.
A natural place to start would be the case $c=2$, given that coherent sheaves
supported at a smooth point on a surface are of considerable interest in
geometry and physics \cite{AJLOP, Bar, ellingsrud1999irreducibility, HJM}.

The next two examples offer a concrete illustration of the concept of Quot schemes.
We exhibit the Quot schemes that parametrize two families of linear PDE
we encountered before.

\begin{example} [$c=2,k=3,\nu=10$] \label{ex:23ten}
Consider the formal power series ring $S = \KK[[y_1,y_2]]$
where $\KK$ is any field. Replacing $\partial_1,\partial_2$
with $y_1,y_2$ in Example \ref{ex:236}, every $ 3 \times 6$ matrix $A$
over $\KK$ defines a submodule $L$ of $S^3$.
The quotient $S^3/L$ is a $10$-dimensional $\KK$-vector space,
so $L$ corresponds to a point in the Quot scheme $  {\rm Quot}^{10} (S^3) $.
By varying $A$, we obtain a closed subscheme of 
$  {\rm Quot}^{10} (S^3) $, which contains the complete flag variety
we saw in  Example \ref{ex:236}.
\end{example}

\begin{example}   \label{ex:quotex3}
For $S = \KK[[y_1,y_2]]$, the scheme $ {\rm Quot}^{\nu} (S^k) $
is an irreducible variety of dimension $k \nu - 1$, 
by \cite[Theorem 2.2]{Bar}.
If $k=2, \nu = 3$ then this dimension is five.
The affine space with coordinates
${\bf c}$ in Example \ref{ex:quotex1} is
a dense open subset $W$ of ${\rm Quot}^3(S^2)$, by \cite[Section 7]{Bar}.
\end{example}

For $k=1$, the Quot scheme
is the {\em punctual Hilbert scheme} ${\rm Hilb}^\nu(S)$;
see   \cite{Bri77}. The points on this
Hilbert scheme represent $\mathfrak{m}$-primary ideals of length $\nu$ 
in $S =   \KK[[y_1,\ldots,y_c]]$.
It was shown in \cite[Theorem 2.1]{CPS} that ${\rm Hilb}^\nu(S)$ parametrizes the
set of all $P$-primary ideals in $R$ of multiplicity $\nu$. This means that we can
encode $P$-primary ideals in $R$ by $\mathfrak{m}$-primary ideals in $S$,
thus reducing scheme structures on any higher-dimensional variety to a
scheme structure on a single point.  This was generalized from 
ideals to submodules ($k \geq 2$)
by Chen and Cid-Ruiz  \cite{CC}. Geometrically, we encode
coherent sheaves by those supported at one point, namely the
generic point of $V(P)$, corresponding to the field extension $\mathbb{K}/K$.
Here is the main result from \cite{CC}, 
stated for the polynomial ring $R$,  analogously to \cite[Theorem~2.1]{CPS}.

\begin{theorem} \label{thm:quot4}
	The following four sets of objects are in a natural bijective correspondence: \vspace{-0.15cm}
	\begin{enumerate}[(a)]
		\item $P$-primary submodules $M$ in $R^k$ of multiplicity $\nu$ over $P$, \vspace{-0.15cm}
    \item $\mathbb{K}$-points in the punctual Quot scheme $\,{\rm Quot}^\nu \! \left(\,\KK[[y_1,\ldots,y_c]]^k\right)$,
		\vspace{-0.15cm}
		\item $\nu$-dimensional $\KK$-subspaces of $\,\KK[z_1,\ldots,z_c]^k$ that are closed under differentiation, \vspace{-0.15cm}
		\item  $\nu$-dimensional $\KK$-subspaces of
		the Weyl-Noether module $ \KK \,\otimes_R  \,D_{n,c}^k$ that are $R$-bimodules. 
	\end{enumerate}
	Moreover, any basis of the $\KK$-subspace (d)  can be lifted to a finite subset $\mathcal{A} $ of $ D_{n,c}^k$
	such that
\begin{equation}
\label{eq:modulemembership2}
M \,\,\, = \,\,\, \bigl\{ \,m \in R^k \,: \, \delta \bullet m \in P
\,\,\,\hbox{for all}\,\,\, \delta \in \mathcal{A}   \,\bigr\}. 
\end{equation}
\end{theorem}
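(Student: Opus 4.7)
The plan is to establish the bijections (a)$\leftrightarrow$(b)$\leftrightarrow$(c)$\leftrightarrow$(d) as a chain, and then derive the final lifting statement by combining the identification between (\ref{eq:Ann}) and (\ref{eq:Bnn}) with Proposition \ref{prop:NONM}. Throughout I reuse the translation map $\gamma$ from Line \ref{alg:line8} of Algorithm \ref{alg:solvePDE}, together with a maximal set $\mathcal{S}$ of $n-c$ variables algebraically independent modulo $P$.

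For (a)$\leftrightarrow$(b), I start with a $P$-primary submodule $M \subseteq R^k$, localize at $P$, and apply $\gamma$ after $P R_P$-adic completion to land in $S^k$ with $S = \KK[[y_1,\ldots,y_c]]$. Two things must be checked: first, $\gamma(M)$ is $\mathfrak{m}$-primary because $\gamma$ carries $P$ into $\mathfrak{m}$ and preserves the primary structure; second, $\dim_{\KK}(S^k/\gamma(M))$ equals the multiplicity $\nu = \operatorname{length}_{R_P}((R^k/M)_P)$, using faithful flatness of completion. The inverse sends $L \in \operatorname{Quot}^{\nu}(S^k)$ to the $P$-primary contraction $\gamma^{-1}(L) \cap R^k$. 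This is the module analogue of \cite[Theorem 2.1]{CPS}.

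For (b)$\leftrightarrow$(c), I would apply Macaulay's inverse systems in the relative setting, using the perfect $\KK$-bilinear pairing
\[
 S \times \KK[z_1,\ldots,z_c] \,\longrightarrow\, \KK, \qquad (f,g) \,\longmapsto\, \bigl(f(\partial_{\bf z}) \bullet g\bigr)\big|_{{\bf z}=0},
\]
in which multiplication by $y_i$ on $S$ is adjoint to differentiation $\partial_{z_i}$ on $\KK[{\bf z}]$. Extending coordinatewise to $S^k$ and $\KK[{\bf z}]^k$, an $\mathfrak{m}$-primary submodule $L \subseteq S^k$ of colength $\nu$ corresponds bijectively to its annihilator $L^\perp \subseteq \KK[{\bf z}]^k$, which is a $\nu$-dimensional $\KK$-subspace closed under every $\partial_{z_i}$ precisely because $L$ is closed under multiplication by every $y_i$. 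For (c)$\leftrightarrow$(d), the $\KK$-linear identification between (\ref{eq:Ann}) and (\ref{eq:Bnn}) matches $\KK[{\bf z}]^k$ with $\KK \otimes_R D_{n,c}^k$; closure under $\partial_{z_i}$ on the polynomial side translates to closure under right multiplication by $\partial_{x_i}$ on the Weyl algebra side, and since the left $R$-action by polynomial multiplication is tautologically available, the combined closure is exactly the $R$-bimodule condition.

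For the final lifting claim, I would take any $\KK$-basis of the bimodule in (d), clear denominators from $\KK = \operatorname{Frac}(R/P)$, and lift each element to $D_{n,c}^k$ to obtain a finite set $\mathcal{A}$ of cardinality $\nu$. Proposition \ref{prop:NONM}, generalized from ideals to primary modules via the same duality argument, then shows that $m \in M$ if and only if $\delta \bullet m \in P$ for all $\delta \in \mathcal{A}$, yielding (\ref{eq:modulemembership2}). The main obstacle is setting up the duality (b)$\leftrightarrow$(c) at the correct level of generality: working over the residue field $\KK$ rather than $K$, and tracking the bookkeeping across the standard basis $e_1,\ldots,e_k$ of the free module. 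Once this Macaulay-type duality is in place, (a)$\leftrightarrow$(b) and (c)$\leftrightarrow$(d) follow formally from the definitions of $\gamma$ and of the Weyl-Noether module, and the final lifting statement reduces to unwinding the identification (\ref{eq:Ann})--(\ref{eq:Bnn}).
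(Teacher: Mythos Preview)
The paper does not supply its own proof of this theorem: it is imported verbatim from \cite{CC}, as announced just before the statement (``Here is the main result from \cite{CC}, stated for the polynomial ring $R$, analogously to \cite[Theorem~2.1]{CPS}''). So there is nothing in the paper to compare your argument against line by line. Your chain (a)$\leftrightarrow$(b) via $\gamma$ and completion, (b)$\leftrightarrow$(c) via Macaulay inverse systems over $\KK$, and (c)$\leftrightarrow$(d) via the identification (\ref{eq:Ann})--(\ref{eq:Bnn}) is the correct architecture and matches the approach of \cite{CC,CPS}.

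Two points in your sketch need repair. First, in (c)$\leftrightarrow$(d) you say that closure under $\partial_{z_i}$ corresponds to closure under \emph{right multiplication by $\partial_{x_i}$}. That is backwards. Under (\ref{eq:Ann})--(\ref{eq:Bnn}), right multiplication by $\partial_{x_i}$ becomes multiplication by $z_i$, not differentiation. What actually produces $\partial_{z_i}$ is right multiplication by $x_i$: commuting $x_i$ past $\partial_{\bf x}^\alpha$ gives $x_i\partial_{\bf x}^\alpha + \alpha_i\,\partial_{\bf x}^{\alpha-e_i}$, and after passing to $\KK\otimes_R D_{n,c}$ the first term is absorbed by the $\KK$-linear structure, leaving exactly $\partial_{z_i}$. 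Since the $R$-bimodule condition concerns left and right multiplication by the $x_i$ (not by the $\partial_{x_i}$), and the left action is already subsumed in the $\KK$-vector space structure, the bimodule condition is equivalent to closure under the right $x_i$-action, hence to closure under all $\partial_{z_i}$. Your conclusion is right; the mechanism you name is not.

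Second, Proposition~\ref{prop:NONM} is not the tool that yields (\ref{eq:modulemembership2}). That proposition matches the full sets $\mathfrak{A}$ and $\mathfrak{B}$ of Noetherian operators and multipliers; it does not by itself tell you that a $\KK$-basis of the bimodule, once lifted, cuts out $M$ exactly. The membership test is rather built into the bijection (a)$\leftrightarrow$(d) you have just constructed: by Macaulay duality, $L^\perp$ determines $L$, and pulling back along $\gamma$ recovers $M$; a $\KK$-basis of $L^\perp$ therefore suffices because the condition $\delta\bullet m\in P$ is $\KK$-linear in $\delta$ after reduction modulo $P$. Alternatively, invoke Theorems~\ref{thm:diff_prim_dec_existence} and~\ref{thm:DPD} directly. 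Either way, drop the appeal to Proposition~\ref{prop:NONM} here.
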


Here  $D_{n,c}$ is the subalgebra of the Weyl algebra $D_n$ consisting of all
operators (\ref{eq:Ann}) with  $s_{c+1} = \cdots = s_n = 0$. 
 This is a special case of (\ref{eq:usingonly}).  Elements 
in $D_{n,c}$ are differential operators in $\partial_{x_1},\ldots,\partial_{x_c}$
whose coefficients are polynomials in $x_1,\ldots,x_n$. Note that
$D_{n,0} = R$ and $D_{n,n} = D_n$.
 Equation (\ref{eq:modulemembership2}) says that $\mathcal{A} $ is
 a differential primary decomposition for the primary module $M$.
 The Noetherian operators in
  $\mathcal{A}$ characterize membership in $M$.
 In this paper, however, we focus on the   $\mathbb{K}$-linear
 subspaces in item (c). By clearing denominators, we can
 represent such a subspace by a basis  $\mathcal{B}$ of
 elements in $K[x_1,\ldots,x_n][z_1,\ldots,z_c]$.
 These are precisely the Noetherian multipliers needed for
 the integral representation of ${\rm Sol}(M)$.
 In summary, Theorem \ref{thm:quot4} may be understood 
 as a theoretical counterpart to Algorithm \ref{alg:solvePDE}.
The following example elucidates the important role 
played by the Quot scheme in our algorithm.

\begin{example}[$n=4,c=2,k=3,\nu=10$]
Let $P$ be the prime ideal in \cite[equation (1)]{CPS}.
Equivalently, $P$ is the prime $P_6$ in Example \ref{ex:mod1a}.
The surface $V(P) \subset \CC^4$ is the cone over the twisted cubic curve.
Consider the point in ${\rm Quot}^{10}(\KK[[y_1,y_2]]^3)$
given by a matrix $A$ as in Example \ref{ex:23ten}.
The bijection from  (b) to (a) in Theorem  \ref{thm:quot4} yields
a $P$-primary submodule $M$ of multiplicity $10$ in $K[x_1,\ldots,x_4]^3$.
Generators for the module $M$ are found by computing the inverse image under
the map $\gamma$, as shown in \cite[equation (2)]{CC}.
This step is the analogue for modules of the  elimination
that creates a  large $P$-primary ideal $Q$ from
\cite[equation~(5)]{CPS}.
Geometrically speaking, the $10$-dimensional space of polynomial vectors that 
are solutions to the PDE in  Example \ref{ex:236} encodes a 
 coherent sheaf of rank $3$ on the singular surface $V(P)$.
\end{example}

The ground field $K$ in Section~\ref{sec5} need not be algebraically closed.
In particular, we usually take $K=\mathbb{Q} $ when computing in {\tt Macaulay2}.
But  this requires some adjustments in our results. For instance,
Theorem \ref{thm:finitedim} does not apply when the coordinates of ${\bf u} \in \CC^n$
are not in $K$. In such situations, we may take $\KK$ to be an algebraic extension of $K$.
We close with an example that shows the effect of the choice of ground field in a concrete computation.

\begin{example}[$n=k=2,l=3$] \label{ex:67}
Consider the module $M$ given in {\tt Macaulay2} as follows:
\begin{verbatim}
R = QQ[x1,x2];  M = image matrix {{x1,x1*x2,x2},{x2,x1,x1*x2}};
                    dim(R^2/M), degree(R^2/M), amult(M)
\end{verbatim}
The output shows that ${\rm amult}(M) = 5$ when $K = \mathbb{Q}$,
but $\nu={\rm amult}(M) = 6$ when $K = \CC$:
Applying now the command {\tt solvePDE(M)}, we find the
differential primary decomposition
\begin{small}
\begin{verbatim}
{{ideal (x2, x1), {| 1 |, | 0 |, | -dx1 |}}, {ideal (x2 - 1, x1 - 1), {| -1 |}},               
                   | 0 |  | 1 |  |  dx2 |                              |  1 |                          
                        2            
 {ideal (x1 + x2 + 1, x2  + x2 + 1), {| x2+1 |}}}
                                      |   1  |
\end{verbatim}
\end{small}
The module $M$ has three associated primes over $K=\QQ$.
The first gives three polynomial solutions, including
$\binom{-z_1\,}{\phantom{-}z_2\,}$.
The second prime contributes $\binom{-1\,}{\phantom{-}1\,}{\rm exp}(z_1+z_2)$,
and the third gives $\binom{x_2+1}{1}{\rm exp}(x_1 z_1 + x_2 z_2)$,
where $(x_1,x_2)$ is 
$\frac{1}{2}(-1+\sqrt{3}i,-1-\sqrt{3}i)$ or
$\frac{1}{2}(-1-\sqrt{3}i,-1+\sqrt{3}i)$.
Here $\,\KK = \mathbb{Q}(\sqrt{3}i)\,$ is the field extension of 
$\,K = \mathbb{Q}\,$ defined 
by the third associated prime.
\end{example}

\section{What Next?}
\label{sec7}

The results presented in this article suggest
many directions for future study and research.

\subsection{Special Ideals and Modules}
\label{subsec71}

One immediate next step is to explore the
PDE corresponding to various specific
ideals and modules that have appeared in the
literature in commutative algebra and algebraic geometry.

One interesting example is the class of ideals studied 
recently by Conca and Tsakiris in \cite{CT},
namely products of linear ideals. 
A minimal primary decomposition for such an ideal $I$
is given in \cite[Theorem 3.2]{CT}. 
It would be gratifying to find the arithmetic multiplicity  ${\rm amult}(I)$
and the solution spaces ${\rm Sol}(I)$ in terms of matroidal data
for the subspaces in $V(I)$.

A more challenging problem is to compute the solution space ${\rm Sol}(J)$
when $J$ is an ideal generated by $n$ power sums in $R = \QQ[\partial_1,\ldots,\partial_n]$.
This problem is nontrivial even for $n=3$. To be more precise, we fix 
relatively prime integers $0 < a < b< c$, and 
we consider the ideal
$$ J_{a,b,c} \,\,=\,\, \langle\,
 \partial_1^a +   \partial_2^a +   \partial_3^a \,,\,
 \partial_1^b +   \partial_2^b +   \partial_3^b\, ,\,
 \partial_1^c +   \partial_2^c +   \partial_3^c \,\rangle.  $$
If $(a,b,c) = (1,2,3) $ then $V(J_{1,2,3}) = \{0\}$
and ${\rm Sol}(J_{1,2,3})$ is a six-dimensional
space of polynomials, spanned by the discriminant
$(z_1-z_2)(z_1-z_3)(z_2-z_3)$ and its successive derivatives.
In general, it is conjectured in \cite{CKW}
that $V(J_{a,b,c}) = \{0\}$
if and only if $abc$ is a multiple of $6$.
If this holds then ${\rm Sol}(J_{a,b,c})$ 
consists of polynomials. If this does not hold
then we must compute  $V(J_{a,b,c})$
and extract the Noetherian multipliers from all
associated primes of $J_{a,b,c}$.
For instance, for $(a,b,c) = (2,5,8)$ with $K=\QQ$,
the arithmetic multiplicity is $38$,
one associated prime is $\langle \partial_1+\partial_2+\partial_3,
\partial_2^2 + \partial_2 \partial_3 + \partial_3^2 \rangle$,
and the largest degree of a polynomial solution is $10$.

It will be worthwhile to explore the solution spaces
${\rm Sol}(M)$ for modules $M$ with special combinatorial structure.
One natural place to start are syzygy modules. For instance, take
\begin{equation}
\begin{small}
\label{eq:koszul} A(\partial) \quad = \quad \begin{pmatrix}
\, \partial_2 & \partial_3 & \partial_4 &    0   &    0 &   0 \\
- \partial_1 &      0      &     0         & \partial_3 & \partial_4 &  0 \\
\,       0          & \! - \partial_1 & 0         & \! - \partial_2 & 0     & \partial_4 \\
\,       0         &      0            & \! - \partial_1 & 0    & \!\! -\partial_2 & \! \! - \partial_3\,\,
\end{pmatrix},
\end{small}
\end{equation}
which is the first matrix in the Koszul complex for $n=4$.
Since ${\rm rank}(A) = 3$,  the module $M = {\rm image}_R(A)$ 
is supported on the entire space, i.e.~$V(M)=\CC^4$. Its solutions are the
gradient vectors $\nabla \alpha = \sum_{j=1}^4 \frac{\partial \alpha}{\partial z_j} e_j$, 
where $\alpha = \alpha(z_1,z_2,z_3,z_4)$ ranges over all
functions in $\mathcal{F}$.

Toric geometry \cite[Chapter 8]{INLA}
furnishes modules whose PDE should be interesting.
The initial ideals of a toric ideal with respect to weight vectors
are binomial ideals, so the theory of binomial primary decomposition
applies, and it gives regular polyhedral subdivisions as in 
\cite[Theorem 13.28]{INLA}.
Non-monomial initial ideals should be studied from the differential point of view.
Passing to coherent sheaves, we may examine the modules representing
toric vector bundles and their Gr\"obner degenerations.
In particular, the cotangent bundle of an embedded toric variety, in
affine or projective space, is likely to encompass intriguing PDE.

\subsection{Linear PDE with Polynomial Coefficients}
\label{subsec72}

We discuss an application 
to PDE with non-constant coefficients,
here taken to be polynomials.
Our setting is the  Weyl~algebra
$ D \,\,= \,\, \CC \langle z_1,\ldots,z_n, \partial_1,\ldots,\partial_n \rangle$.
A linear system of PDE with polynomial coefficients is a {\em $D$-module}.
For instance, consider a $D$-ideal $I$, that is, a left
ideal in the Weyl algebra $D$.
The solution space of $I$ is typically infinite-dimensional.

We construct solutions to $I$ with the
method of {\em Gr\"obner deformations} 
\cite[Chapter 2]{SST}. Let $w \in \RR^n$ be a general weight vector,
and consider the initial $D$-ideal ${\rm in}_{(-w,w)}(I)$. This is also a $D$-ideal,
and it plays the role of Gr\"obner bases in solving polynomial equations.
We know from \cite[Theorem 2.3.3]{SST} that 
${\rm in}_{(-w,w)}(I)$ is fixed under the natural action of the
$n$-dimensional algebraic torus $(\CC^*)^n$ on the Weyl algebra $D$.
This action is given in \cite[equation (2.14)]{SST}.
It gives rise to a Lie algebra action generated by the $n$ Euler operators
$$ \theta_i \, = z_i \partial_i \quad {\rm  for} \,\,\, i = 1,2,\ldots,n. $$
These Euler operators commute  pairwise, and they generate a (commutative) polynomial subring
$\CC[\theta] = \CC[\theta_1,\ldots,\theta_n]$ of the Weyl algebra $D$.
If $J$ is any torus-fixed  $D$-ideal then it is generated by
operators of the form $x^a p(\theta) \partial^b$ where $a,b \in \mathbb{N}^n$.
We define the falling factorial 
$$ [\theta_b] \,\, := \,\, \prod_{i=1}^n \prod_{j=0}^{b_i - 1} (\theta_i - j). $$
The {\em distraction}  $ \widetilde J $ is the ideal in $\CC[\theta]$
generated by all polynomials 
$\, [\theta_b] p(\theta-b) \, = \, x^b p(\theta) \partial^b $,
where $ x^a p(\theta) \partial^b$ runs over a generating set of $J$.
The space of classical solutions to $J$ is equal to that of $\widetilde J$.
This was exploited in \cite[Theorem 2.3.11]{SST} under the 
assumption that $J$ is holonomic, which means that $\widetilde J$ is zero-dimensional
in $\CC[\theta]$. We here drop that assumption.

Given any $D$-ideal $I$, we compute its
initial $D$-ideal $J = {\rm in}_{(-w,w)}(I)$ for $w \in \RR^n$ generic.
Solutions to $I$ degenerate to solutions of $J$
under the Gr\"obner degeneration given by $w$. 
We can often reverse that construction:
given solutions to $J$, we lift them
to solutions of $I$. 
Now, to construct all solutions of $J$ we study the
{\em Frobenius ideal} $F=\widetilde J$. This is an ideal in  $\CC[\theta]$.

We now describe all solutions to a given ideal
$F$ in $\CC[\theta]$. This was done in \cite[Theorem 2.3.11]{SST}
for zero-dimensional $F$.
 Ehrenpreis--Palamodov allows us to solve the general case.
Here is our algorithm.
We replace each operator $\theta_i = z_i \partial_i$ by the corresponding
$\partial_i$. We then apply {\tt solvePDE} to get
the general solution to the linear PDE with constant coefficients.
In that general solution, we now replace each coordinate $z_i$
by its logarithm ${\rm log}(z_i)$. In particular, each occurrence
of ${\rm exp}(u_1z_1+ \cdots +u_n z_n)$ is replaced by a formal monomial
$z_1^{u_1}  \cdots z_n^{u_n}$. The resulting expression represents
the general solution to the Frobenius ideal~$F$.

\begin{example}
As a warm-up, we note that a function in one
variable $z_2 $ is annihilated by the squared Euler operator $\,\theta_2^2 = 
z_2 \partial_2 z_2 \partial_2\,$ if and only if
it is a $\CC$-linear combination of $1$ and ${\rm log}(z_2)$.
Consider the Frobenius ideal given by Palamodov's
system \cite[Example 11]{CPS}:
$$ F \,\,= \,\, \langle \,\theta_2^2\,, \,\theta_3^2\,,\, \theta_2 - \theta_1 \theta_3 \,\rangle .$$
To find all solutions to $F$, we consider the corresponding ideal 
$ \langle \,\partial_2^2\,, \,\partial_3^2\,,\, \partial_2 - \partial_1 \partial_3 \,\rangle $
in $\CC[\partial_1,\partial_2,\partial_3]$.
By {\tt solvePDE}, the general solution to that constant coefficient system equals
$$ \alpha(z_1) \,\,+\,\, z_2 \cdot \beta'(z_1) \,\,+ \,\,z_3 \cdot \beta(z_1), $$
where $\alpha$ and $\beta$ are functions in one variable.
We now replace $z_i$ by ${\rm log}(z_i)$ and we abbreviate
$A(z_1) = \alpha({\rm log}(z_1))$ and
$B(z_1) = \beta({\rm log}(z_1))$. Thus $A$ and $B$ are again arbitrary functions in one variable.
We conclude that the general solution to the given Frobenius ideal $F$ equals
$$ \phi(z_1,z_2,z_3) \,\,\, = \,\,\,A(z_1) \,+\, z_1 \cdot {\rm log}(z_2) \cdot B'(z_1) \,+\, {\rm log}(z_3) \cdot B(z_1). $$
 This method can also be applied for $k \geq 2$, enabling us to study
 solutions for any $D$-module.
\end{example}

\subsection{Socle Solutions}

The solution space ${\rm Sol}(M)$ to a system $M$ of linear PDE 
is a complex vector space, typically infinite-dimensional.
Our algorithm in Section \ref{sec5} decomposes that  space into 
finitely many natural pieces, one for each of the integrals 
in (\ref{eq:anysolution}). 
The number ${\rm amult}(M)$ of pieces is a meaningful
invariant from commutative algebra.
Each piece is labeled by a polynomial
$B_{ij}({\bf x},{\bf z})$ in $2n$ variables, and it is
parametrized by measures $\mu_{ij}$ on the irreducible variety~$V_i$.

This approach does not take full advantage of the 
fact that ${\rm Sol}(M)$ is an $R$-module 
where $R = \CC[\partial_1,\ldots,\partial_n]$.
Indeed, if $\psi({\bf z})$ is any solution to $M$ then so
is $(\partial_i \bullet \psi)({\bf z})$. So, if we list all solutions then
$\partial_i \bullet \psi$ is redundant provided $\psi$ is already listed.
More precisely, we consider
\begin{equation} \label{eq:socle}
 {\rm Sol}(M) / \langle \partial_1,\ldots,\partial_n \rangle {\rm Sol}(M). 
 \end{equation}
This quotient space is still infinite-dimensional over $\CC$, but
it often has a much smaller description than
${\rm Sol}(M)$.
A solution to $M$ is called a {\em socle solution} if it is nonzero in
(\ref{eq:socle}).
We pose the problem of modifying {\tt solvePDE}
so that the output is a minimal subset of
Noetherian multipliers which represent all the
socle solutions. The solution will require the prior development
of additional theory in commutative algebra, along the lines of \cite{CC, CPS, CS}.

The situation is straightforward in the
case of Theorem \ref{thm:finitedim} when
the support $V(M)$ is finite.
Here the space ${\rm Sol}(M)$ is finite-dimensional, and it is
canonically isomorphic to the vector space dual
of $R^k/M$, as shown in \cite{oberst96}. Finding the socle
solutions is a computation using linear algebra
over $K = \CC$, similar to the three steps after
Proposition \ref{prop:poly_sol_deg_bound}. For instance,
let $k=1$ and suppose that $I$ is
a homogeneous ideal in $R$.
The socle solutions are sometimes called {\em volume polynomials}
\cite[Lemma 3.6.20]{SST}. The most desirable case
arises when $I$ is {\em Gorenstein}. Here the socle solution is unique
up to scaling, and it fully characterizes $I$.
For instance, consider the power sum ideal
$\,\langle \,\sum_{i=1}^n \partial_i^s \, : \,s=1,\ldots,n \,\rangle$.
This is Gorenstein with volume polynomial
$\Delta = \prod_{1 \leq i < j \leq n} (z_i-z_j)$.
For $n=3$, the ideal $I$ is $J_{1,2,3}$ in Subsection~\ref{subsec71}.
Here ${\rm Sol}(I)$ is a $\CC$-vector space of 
dimension $n!$. However, as an $R$-module, it is generated by a single
polynomial $\Delta$. A future version of {\tt solvePDE} should simply output
$\,{\rm Sol}(I) = R \Delta$.

It is instructive to revisit the general solutions to PDE we 
presented in this paper, and to highlight the socle solutions for each of them.
For instance, in Example~\ref{ex:312} we have ${\rm amult}(I) = 4$ 
but only one of the four Noetherian multipliers $B_i$ 
gives a  socle solution. The last summand 
in $(\ref{eq:niceintrep})$ gives the socle solutions.
The first three summands can be obtained from the last summand
by taking derivatives. What are the socle solutions in Example
\ref{ex:mod1a}?

\subsection{From Calculus To Analysis}

The storyline of this paper is meant to be accessible
for students of multivariable calculus.
These students know how to check that (\ref{eq:pdesys2})
is a solution to (\ref{eq:pdesys1}).
The derivations in
Examples \ref{ex:312}, \ref{ex:mod1a},
\ref{ex:51},  \ref{ex:53},
\ref{ex:n2}, \ref{ex:236}   and \ref{ex:67}
are understandable as well.
No prior exposure to abstract algebra is needed to follow these
examples, or to download
{\tt Macaulay2} and run   {\tt solvePDE}.

The objective of this subsection is to move beyond calculus, and to
build a bridge to advanced themes and current research 
in analysis. First of all, we ought to consider
inhomogeneous systems of linear PDE
with constant coefficients. Such a system has the~form
\begin{equation}
\label{eq:inhomogeneous}
\,A(\partial) \bullet \psi({\bf z}) \,=\, f({\bf z}) ,
\end{equation} where $A$ is
a $k \times l$ matrix as before
and $f$ is a vector in $ \mathcal{F}^l$,
where $\mathcal{F}$ is a  space of
functions or distributions. 
Writing $a_i$ for the $i$th column of $A$, the
system (\ref{eq:inhomogeneous}) describes vectors $\psi = (\psi_1,\ldots,\psi_k)$ 
with $a_i \bullet \psi = f_i$ for $i=1,\ldots,l$.
The study of the inhomogeneous system (\ref{eq:inhomogeneous})
is a major application of 
Theorem \ref{thm:Palamodov_Ehrenpreis}.
We see this in Palamodov's book \cite[Chapter VII]{PALAMODOV}, but also in 
the work of Oberst who addresses the
``canonical Cauchy problem'' in \cite[Section 5]{oberst90}.
An important role is played by the syzygy module ${\rm ker}_R(A) \subset R^l$,
whose elements are the $R$-linear relations on the columns $a_1,\ldots,a_l$.
A necessary condition for solvability of (\ref{eq:inhomogeneous})
is that the Fourier transform of the right hand side $f = (f_1,\ldots,f_l)$
satisfies the same syzygies. H\"ormander shows 
in \cite[Theorem 7.6.13]{HORMANDER} that the converse is also true,
under certain regularity hypotheses on $f$. Thus the computation of
syzygies and other homological methods (cf.~\cite[Part III]{Eisenbud})
are useful for solving (\ref{eq:inhomogeneous}).
Treves calls this {\em Simple algebra in the general case} \cite[Section 3.1]{Treves}.
We point to his exact sequence in \cite[equation (3.5)]{Treves}.
 Syzygies can be lifted to D-modules \cite[Section 2.4]{SST}
 via the Gr\"obner deformations in
 Subsection~\ref{subsec72}.

Another issue is to better understand which
collections of vectors $B_{ij}$ arise as
Noetherian multipliers for some PDE.
The analogous question for
Noetherian operators of ideals is addressed in \cite[Theorem 3.1]{CPS}.
That result is essentially equivalent to the characterization 
in \cite[Theorem 7.7.7]{HORMANDER} of spaces 
   $\mathcal{A}$ of Noetherian operators
for a primary module as being closed under the Lie bracket.
More work on this topic is needed.
This is related to the
issue of primary fusion, discussed at the end of
\cite[Section 5]{CS}, which concerns the compatibility of
minimal sets of Noetherian operators for 
associated primes that are contained in one another.

We end with a pointer to current research in calculus of variations
by De Phillippis and collaborators in~\cite{ADHR, DR}.
Each solution $\mu$ to the PDE $A \bullet \mu = 0$
is a Radon measure on an open set
in $\RR^n$ with values in $\RR^k$.  Such a measure $\mu$
is called {\em $A$-free}, and one is interested in the
singular part $\mu^s $ of $\mu$. Analysts
view solutions among smooth functions as classical and well-understood,
and they care primarily about irregularities and their rectifiability.
One studies $\mu^s$ via the  polar vector
$\frac{{\rm d} \mu}{{\rm d} |\mu|}$ in $\RR^k$.
The main result in \cite{DR} states that this vector always
lies in the wave cone $\Lambda_A$.  This 
is a real algebraic variety in $\RR^k$ which is an invariant
of our module $M = {\rm image}_R(A)$. 
When $A = {\rm curl} \,$ as in  (\ref{eq:koszul}),
the wave cone is a Veronese  variety, and the result is
Alberti's Rank-One Theorem.
The article \cite{ADHR} proves  the same conclusion  for more refined 
 wave cones, and it offers a conjecture relating
the geometry of wave cones to the singular supports
of solutions \cite[Conjecture 1.6]{ADHR}.
It would be interesting to compute these real varieties
in practice, and to learn about $A$-free measures
from the output of {\tt solvePDE}.

\subsection{Numerical Algebraic Geometry}

In applications, one often does not have access to an exact representation of a problem, but rather some approximation with possible errors introduced by measurements or finite-precision arithmetic.
The last decade of developments in \emph{numerical algebraic geometry} \cite{BHSW} provides tools for the numerical treatment of such polynomial models.
In that paradigm, a prime ideal $P \subset \CC[\mathbf{x}]$ 
is represented by a \emph{witness set}, i.e. a set of $\operatorname{deg}(P)$ points approximately on $V(P) \cap L$, where $L$ is a generic affine-linear 
space of dimension $c = \operatorname{codim}(P)$.
Similarly, radical ideals are  collections of witness sets corresponding to irreducible components.
Dealing with general ideals and modules is much more subtle,
since these have embedded primes.
One idea, pioneered by Leykin \cite{LEYKIN}, is to consider \emph{deflations} of ideals.
Modules were not considered in \cite{LEYKIN}.
Deflation has the effect of exposing embedded and non-reduced components as isolated components, which can subsequently be represented using witness sets.
One drawback is that the deflated ideal lies in a polynomial ring with many
new variables. 

We advocate the systematic development of numerical methods
for linear PDE with constant coefficients. Noetherian operators and
multipliers can be used to  represent arbitrary ideals and modules.
For each prime $P$,  both the field 
$\mathbb{K}=\operatorname{Frac}(R/P)$
and the spaces in (\ref{eq:frakAB})
should be represented purely numerically.
Along the way, one would extend the current repertoire of
numerical algebraic geometry to modules
and their coherent sheaves.

First steps towards the numerical encoding of affine schemes
were taken in \cite{CHKL}, for ideals $I$ with
 no embedded primes. 
 The key observation is that
the coefficients of the Noetherian operators for the $P$-primary component of $I$  can be evaluated at a point
 ${\bf u} \in V(P)$ using only linear algebra over $\CC$.
 This linear algebra step can be carried out purely numerically.
 
Inspired by this, we propose a numerical representation of an arbitrary module
 $M \subseteq R^k$.
Let $(P_i, \mathcal{S}_i, \mathcal{A}_i)$ be a differential primary decomposition 
as in  Theorem \ref{thm:DPD}.
Assuming the ability to sample generic points $\mathbf{u}_i \in V(P_i)$,
we encode the sets $\mathcal{A}_i$ by their point evaluations $\mathcal{A}_i(\mathbf{u}_i) = \{ A(\mathbf{u}_i, \mathbf{\partial_\mathbf{x}}) \colon A(\mathbf{x}, \partial_\mathbf{x}) \in \mathcal{A}_i \}$.
Each evaluated  operator $ A(\mathbf{u}_i, \mathbf{\partial_\mathbf{x}})$
 gives an exponential solution 
$B(\mathbf{u}_i, \mathbf{z}) \exp(\mathbf{u}_i^t \,\mathbf{z})$ 
to the PDE given by $M$ via the correspondence in Proposition~\ref{prop:NONM}.
We obtain a numerical module membership test: a polynomial
vector $m \in R^k$ belongs to $M$ with high probability if $A(\mathbf{u}_i, \partial_\mathbf{x}) \bullet m$ vanishes at the point $\mathbf{u}_i$ for all $A \in \mathcal{A}_i(\mathbf{u}_i)$ and $i = 1,\dotsc,s$.  The exponential functions
${\bf z} \to B(\mathbf{u}_i, \mathbf{z}) \exp(\mathbf{u}_i^t \,\mathbf{z})$,
which depend on numerical parameters $\mathbf{u}_i$,
serve as an encoding of
the infinite-dimensional $\CC$-vector space ${\rm Sol}(M)$.

Another potential research direction is the development of hybrid
 algorithms, where numerical information is used to speed up symbolic computations.
Assuming the numerical approximations to be accurate enough, the output of a 
hybrid algorithm is exact.
For Noetherian operators of ideals with no embedded components, this is explored in \cite{CHKL}, and it is already 
implemented in the {\tt Macaulay2} package \texttt{NoetherianOperators} \cite{CCHKL} 
using the command \texttt{noetherianOperators(I, Strategy => "Hybrid")}.
It will be desirable to extend this hybrid method to the command \texttt{solvePDE},
in the full generality seen in Algorithm  \ref{alg:solvePDE}.

In conclusion, the numerical solution of partial differential equations
is the key to computational science. The case of linear PDE
with constant coefficients serves as a base case. 
We hope that the  techniques described in this article
will be useful for the future applications.

\bigskip \bigskip

\noindent
\footnotesize 
{\bf Authors' addresses:}

\smallskip

\noindent Rida Ait El Manssour, MPI-MiS Leipzig
\hfill {\tt rida.manssour@mis.mpg.de}

\noindent Marc H\"ark\"onen, Georgia Institute of Technology
\hfill {\tt harkonen@gatech.edu}

\noindent Bernd Sturmfels,
MPI-MiS Leipzig and UC Berkeley 
\hfill {\tt bernd@mis.mpg.de}
\end{document}